%
%
%

\documentclass[graybox]{svmult}


\usepackage{helvet}         
\usepackage{courier}        
\usepackage{type1cm}        

%
\usepackage{mathdots}        

\usepackage{makeidx}         
\usepackage{graphicx}        
\usepackage{multicol}        
\usepackage[bottom]{footmisc}
\usepackage{bm}
\usepackage{mathtools}
\usepackage{color}
\usepackage[top=3.cm, bottom=3cm, outer=3.5cm, inner=3cm]{geometry}


\makeindex             


\usepackage{latexsym}
\usepackage{dsfont}
\usepackage{bbm}
\usepackage{amssymb}
\usepackage{amsmath}
\usepackage{color}
\usepackage{soul}

\newtheorem{Theorem}{Theorem}[section]
\newtheorem{Lemma}[Theorem]{Lemma}
\newtheorem{Cor}[Theorem]{Corollary}
\newtheorem{Prop}[Theorem]{Proposition}
\newtheorem{Rem}[Theorem]{Remark}


\def\cE{\mathcal{E}}

\def\cS{\mathcal{S}}

\def\cX{\mathcal{X}}



\def\Erw{\mathbb{E}}

\def\N{\mathbb{N}}

\def\Prob{\mathbb{P}}

\def\R{\mathbb{R}}

\def\Z{\mathbb{Z}}



\def\eps{\varepsilon}
\def\vph{\varphi}



\def\1{\vec{1}}
\def\3{{\ss}}

\def\eqdist{\stackrel{d}{=}}

\def\idist{\stackrel{d}{\to}}

\def\wh{\widehat}

\newcommand{\stirling}[2]{\genfrac{[}{]}{0pt}{}{#1}{#2}}



\newcommand{\od}{\overset{d}{=}}
\newcommand{\ofdd}{\overset{f.d.d.}{=}}
\newcommand{\dod}{\overset{d}{\to}}

\def\todistr{\stackrel{d}{\longrightarrow}}
\def\todistrfd{\stackrel{f.d.d.}{\longrightarrow}}

\def\toweak{\stackrel{w}{\longrightarrow}}

\def\uarr{\uparrow}
\def\darr{\downarrow}

\allowdisplaybreaks[4] 

\begin{document}

\title*{A leader-election procedure using records}
\titlerunning{A leader-election procedure using records}
\author{Gerold Alsmeyer$^{1}$, Zakhar Kabluchko$^{1}$ and Alexander Marynych$^{1,2}$}
\institute{$^{1}$ Inst.~Math.~Statistics, Department
of Mathematics and Computer Science, University of M\"unster,
Orl\'eans-Ring 10, D-48149 M\"unster, Germany.\at
$^{2}$ Faculty of Cybernetics, Taras Shevchenko National University of Kyiv, 01601 Kyiv, Ukraine\at
\email{gerolda@math.uni-muenster.de, zakhar.kabluchko@uni-muenster.de,\at marynych@unicyb.kiev.ua}}

\maketitle

\abstract{Motivated by the open problem of finding the asymptotic distributional behavior of the number of collisions in a Poisson-Dirichlet coalescent, the following version of a stochastic leader-election algorithm is studied. Consider an infinite family of persons, labeled by $1,2,3,\ldots$, who generate iid random numbers from an arbitrary continuous distribution. Those persons who have generated a record value, that is, a value larger than the values of all previous persons, stay in the game, all others must leave. The remaining persons are relabeled by $1,2,3,\ldots$ maintaining their order in the first round, and the election procedure is repeated independently from the past and indefinitely. We prove limit theorems for a number of relevant functionals for this procedure, notably the number of rounds $T(M)$ until all persons among $1,\ldots,M$, except the first one, have left (as $M\to\infty$). For example, we show that the sequence $(T(M)-\log^{*}M)_{M\in\N}$, where $\log^{*}$ denotes the iterated logarithm, is tight, and study its weak subsequential limits.  We further provide an appropriate and apparently new kind of normalization (based on tetrations) such that the original labels of persons who stay in the game until round $n$ converge (as $n\to\infty$) to some random non-Poissonian point process and study its properties. The results are applied to describe all subsequential distributional limits for the number of collisions in the Poisson-Dirichlet coalescent, thus providing a complete answer to the open problem mentioned above.}

\bigskip

{\noindent \textbf{AMS 2000 subject classifications:} primary 60F05, 60G55; secondary 60J10
}

{\noindent \textbf{Keywords:} Poisson-Dirichlet coalescent, leader-election procedure, absorption time, random recursion, tetration, iterated logarithm, records}

\section{Introduction}\label{sec:intro}

Motivated by a natural connection with the number of collisions in a Poisson-Dirichlet coalescent to be explained further below (Section \ref{sec:PD}), we propose and analyze a leader-election procedure \cite{Fill+Mahmoud+Szpankowski:96,Janson+Lavault+Louchard:2008,Janson+Szpankowski:97,Kalpathy+Mahmoud+Ward:2011,Prodinger:93} which, unlike its classical version using independent, identically distributed (iid) Bernoulli trials, is based on the records in a sequence of iid continuous random variables. The procedure starts with an infinite number of players labeled by $1,2,3,\ldots$
who independently generate random numbers from an arbitrary continuous distribution
(which can be chosen w.l.o.g. to be uniform on $(0,1)$). Those players holding a record value, that is, a value larger than those of all preceding players, stay in the game for the next round. Each round is run independently from the previous ones and in the same manner after relabeling the players still in the game by $1,2,3,\ldots$ while keeping the original order. Notice that player $1$ always remains in the game and retains his number.
He can therefore be considered either as the person who is always elected as the leader, or as a ``dummy'', who does not participate in the leader-election procedure at all. Here we adopt the first interpretation so that, for instance, the time needed to select the leader is the same as the time until all players except the first one leave the game.

\vspace{.1cm}
Define the indicator variables
\begin{equation}\label{xi_definition}
\xi_{i}^{(n)}\ :=\ \1\{\text{in round }n, \text{ the player with the current number }i\text{ survives}\} 
\end{equation}
for $i,\,n\in\N$. By R\'enyi's theorem on records, see \cite[page 58]{Nevzorov:01} or \cite{Renyi:62}, the infinite random vectors $(\xi^{(n)}_{1},\xi^{(n)}_{2},\ldots)$, $n\in\N$, are independent distributional copies of $(\xi_{1},\xi_{2},\ldots)$, where $(\xi_{i})_{i\in\N}$ is a sequence of independent random variables with
\begin{equation}\label{xi_distribution}
\Prob\{\xi_{i}=1\}\ =\ 1-\Prob\{\xi_{i}=0\}=\frac{1}{i}
\end{equation}
for $i\in\N$. We are interested in the following quantities:
\begin{itemize}\itemsep2pt
\item $N^{(n)}_{M}$, the number of players among $1,2,\ldots,M$ who survived the first $n$ rounds, formally
\begin{equation}\label{N_definition}
N_{M}^{(0)}:=M\quad\text{and}\quad N_{M}^{(n)}:=\xi^{(n)}_{1}+\ldots+\xi^{(n)}_{N_{M}^{(n-1)}}
\end{equation}
for $M,n\in\N$.
\item $1 = S_{1}^{(n)} < S_{2}^{(n)} < S_{3}^{(n)} <\ldots $, the original numbers of the players who survived the first $n$ rounds, formally
\begin{equation}\label{S_definition}
S_{j}^{(n)}:=\inf\{i\in\N\colon N_{i}^{(n)}=j\}
\end{equation}
for $j,n\in\N$.
\item $T(M)$, the number of rounds until only one player (which is of course the first one) among $1,2,\ldots,M$ remains, thus
\begin{equation}\label{T_definition}
T(M)\ :=\ \inf\{n\in\N\colon N_{M}^{(n)}=1\}
\end{equation}
for $M\in\N$.
\item $T_{0}(M)$, the number of \emph{conclusive} rounds, i.e. the number of rounds among $1,\ldots,T(M)$ in which at least one of the players with original labels $1,\ldots,M$ is eliminated:
\begin{equation}\label{T0_definition}
T_{0}(M):=\sum_{j=0}^{T(M)-1} \1\{N_{M}^{(j)}\neq N_{M}^{(j+1)}\}.
\end{equation}
\end{itemize}
The counting process of records is defined by
\begin{equation}\label{k_definition}
K(M)\ :=\ \xi_{1}+\xi_{2}+\ldots+\xi_{M},\quad M\in\N,
\end{equation}
and the associated process of record times as the corresponding first passage time process, viz.
\begin{equation}\label{nu_definition}
\nu(k)\ :=\ \inf\{j\in\N\colon K(j)\ge k\}\ =\ \inf\{j\in\N\colon K(j)=k\}, \quad k\in\N.
\end{equation}
Here, the second equality holds because $K(M)$ has unit jumps only. Hence, $K(M)$ is the number of records in the sample of size $M$, whereas $\nu(k)$ is the index of the $k$-th record in the infinite sample.

\section{Main results}\label{sec:results}

In this section we state our main results for the model just introduced. All proofs are presented in Section \ref{sec:proofs}. Whereas  the classical leader-election procedure based on iid Bernoulli trials was mostly studied by tools from Analytic Combinatorics, as in \cite{Fill+Mahmoud+Szpankowski:96,Prodinger:93}, or by exploiting the connection with maxima in samples from the geometric distribution, as in \cite{BrussGrubel:03},  hereby relying on the particularly nice structure of this model, our approach is entirely different and not even restricted to the leader-election procedure proposed here. For example, we could also treat by similar methods  procedures based on independent Bernoulli trials with success probabilities that are given by any sufficiently nice function of $i$ instead of \eqref{xi_distribution}, for example $\theta/(i+\theta-1)$, $\theta>0$, or $i^{-\alpha}$, $\alpha\in (0,1)$.

\vspace{.1cm}
In what follows, we use $\overset{d}{\longrightarrow}$ to denote convergence in distribution and ${\overset{f.d.d.}{\longrightarrow}}$ to denote weak convergence of finite-dimensional distributions. The notation ${\overset{w}{\longrightarrow}}$ is used for weak convergence of random elements on a topological space to be specified on every occurrence. Let $[x]$ denote the integer part of $x$.

\subsection{The time needed to select the leader}

Before stating our limit theorem for $T(M)$ as $M\to\infty$, we provide some intuition. It is known \cite{Renyi:62} that the number $K(M)$ of records in $M$ iid observations satisfies
\begin{equation}\label{slln_records}
\lim_{M\to\infty} \frac{K(M)}{\log M} = 1 \quad \text{a.s.}
\end{equation}
For very large $M$, the number of persons among $1,\ldots,M$ who survive the first round is therefore approximately $\log M$, the number of persons who survive the second round is approximately $\log\log M$, and so on. In order to obtain an approximation for $T(M)$, consider the \textit{iterated logarithm} $\log^{*}$ which is the integer-valued function defined as
\begin{equation}\label{log-star-definition}
\log^{*}x := 0,\quad 0 \le x < 1,
\quad
\log^{*}x := 1+\log^{*}(\log x),\quad x\ge 1.
\end{equation} 
More explicitly, we have
$$
\log^{*}x = 
\begin{cases}
0, & \text{ if } 0 \le x < 1,\\
1, & \text{ if } 1\le x < e,\\
2, & \text{ if } e\le x < e^e,\\
\vdots &\vdots\\
j, & \text{ if } e\uparrow\uparrow (j-1) \leq x < e\uparrow\uparrow j, \;\; j\in\N,\\
\vdots &\vdots,
\end{cases}
$$
where we used Knuth's uparrow notation \cite[Ch.~2]{Knuth:96}
$$
a\uparrow\uparrow b = a^{\,{\textstyle a}^{\iddots^{\textstyle a}}} \quad \text{($b$ copies of  $a$)}.
$$
The previous considerations suggest that, for large $M$, $T(M)$ should differ from $\log^* M$ by at most $O(1)$. This is confirmed by our first theorem.

\begin{Theorem}\label{mainT0}
The sequence $(T(M) -\log^{*} M)_{M\in \N}$ is tight (and in fact $L^{r}$-bounded for every $r > 0$), but it does not converge in distribution.
\end{Theorem}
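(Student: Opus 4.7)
View $(N_M^{(n)})_{n\geq 0}$ as a Markov chain on $\N$ with absorbing state $1$, driven by $N_M^{(n+1)}=K^{(n+1)}(N_M^{(n)})$ where $K^{(1)},K^{(2)},\ldots$ are i.i.d.\ copies of the record-counting process from \eqref{k_definition}. Standard identities give $\Erw[K(k)]=H_k=\log k+\gamma+o(1)$ and $\Var(K(k))\leq H_k$, and the representation of $K(k)$ as a sum of independent Bernoullis yields Bernstein-type tail bounds $\Prob\{|K(k)-H_k|>u\}\leq 2\exp(-cu^2/\log k)$ for $u\leq H_k$. Iterating the first-moment heuristic suggests $N_M^{(n)}\approx \log^{(n)} M$ as long as $\log^{(n)} M$ is large, and this approximation is what drives the entire argument.

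For the upper side of tightness, fix $\eps>0$, choose a threshold $R$ large, and prove by induction on $n$ that, on an event of probability at least $1-\eps$, $N_M^{(n)}\leq 2\log^{(n)} M$ whenever $\log^{(n)} M\geq R$. The inductive step uses the exponential tail together with the monotonicity of $k\mapsto K(k)$; the failure probabilities at level $n$ decay like a negative power of $\log^{(n)} M$, which is summable across the $\log^{*} M$-many relevant levels. On this event, after $\log^{*} M+O(1)$ rounds the chain has entered the bounded set $\{1,\ldots,R\}$, and from any such starting state the absorption time $\tau_R$ of $1$ is a random variable independent of $M$ with geometric tails, because from every state $k\geq 2$ the probability of a strict decrease in a single round is bounded away from $0$. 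The lower side is analogous: the same tail estimate yields $N_M^{(n)}\geq \tfrac12\log^{(n)} M$ on a high-probability event, hence $N_M^{(\log^{*}M-C)}\geq 2$ for $C$ sufficiently large, giving $\Prob\{T(M)<\log^{*}M-C\}<\eps$. Combining both sides gives tightness, and sharpening the exponential tails yields the stronger statement $\sup_M\Erw|T(M)-\log^{*}M|^{r}<\infty$ for every $r>0$.

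For the absence of a distributional limit, observe first that $T(M)-\log^{*}M$ is integer-valued, so it is enough to produce two subsequences whose limit laws differ. Take $M_j:=\lceil e\uparrow\uparrow(j-1)\rceil$ and $M_j':=\lfloor e\uparrow\uparrow j\rfloor$; both satisfy $\log^{*}M_j=\log^{*}M_j'=j$ for large $j$, but the ``residuals'' behave very differently: $\log^{(j-1)}M_j\downarrow 1$ while $\log^{(j-1)}M_j'\uparrow e$. The approximation $N_M^{(n)}\approx\log^{(n)}M$ then implies that the chain started from $M_j'$ typically requires one extra round compared with the chain started from $M_j$ before becoming trapped in $\{1,2,3\}$. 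Passing to the limit in the first $j-\mathrm{const}$ iterations and then applying the Markov property identifies the subsequential limits as the law of $\tau_R$ started from, respectively, the weak limit of $N^{(j-\mathrm{const})}_{M_j}$ and $N^{(j-\mathrm{const})}_{M_j'}$; a direct expectation computation shows these two laws differ by roughly $1$ in mean.

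The main obstacle is the controlled propagation of random errors through the double-logarithmic recursion: the fluctuation $\sqrt{\log^{(n)}M}$ introduced at step $n$ is negligible compared with $\log^{(n)} M$ only while the latter is large, and the transition from the ``large'' to the ``bounded'' regime takes place precisely at tower level $\log^{*}M - O(1)$, which is the level at which $T(M)-\log^{*}M$ becomes sensitive to the arithmetic of $M$ inside its tower layer. Making the coupling between these two regimes rigorous while keeping all error bounds uniform in $M$ is the technically delicate part.
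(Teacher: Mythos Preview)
Your tightness and $L^{r}$-boundedness argument is sound and genuinely different from the paper's. The paper does \emph{not} run a forward Markov-chain concentration argument; instead it relies on the backward-iteration machinery built for Theorem~\ref{main1} (the variables $\cX_{n,k}=L_n(\eta_k^{(n)})$, the bounded-variation estimate~\eqref{eq:sum_expect_diff_X_{n}_j}, and the pointwise bound~\eqref{eq:mathcal_X_small}) to control $\Erw N^{(n)}_{[\widetilde E_n(0)]}$ and $\Erw L_{n-z}(S_2^{(n-z)})$, and then applies Markov's inequality. Your Bernstein/induction route avoids that machinery entirely; in fact the upper tail can be done even more cheaply than you indicate, via Jensen: $\Erw K(k)=H_k\le 1+\log k$ gives $\Erw N_M^{(n)}\le L_n(M)$, hence $\sup_M\Erw N_M^{(\log^{*}M)}\le\sup_n L_n(\widetilde E_n(0))<\infty$, and then $\Prob\{T(M)>\log^{*}M+z\}\le 2^{-z}\Erw N_M^{(\log^{*}M)}$ from $\Erw(K(m)-1)\le\tfrac12(m-1)$. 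Your concentration argument handles the lower tail just as well. So for this half, your approach is more elementary and self-contained; the paper's approach has the advantage that the same machinery simultaneously yields Theorems~\ref{main1}, \ref{main2} and~\ref{mainT}.

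The non-convergence half has a real gap. You write that ``the weak limit of $N^{(j-\mathrm{const})}_{M_j}$'' exists, and then compare expectations of the absorption time from that limit. But the existence of these weak limits is precisely the content of Theorem~\ref{main2}/Corollary~\ref{main2p}, which the paper proves via the backward-iteration scheme and Proposition~\ref{prop:no_atoms}; your forward concentration bounds only pin $N^{(j-C)}_{M_j}$ to an interval, they do not establish a limit law. The paper's proof of non-convergence is accordingly quite different: it invokes Corollary~\ref{mainT_tilde} (hence the full strength of Theorem~\ref{mainT}) to identify the subsequential limits along $[\widetilde E_n(\rho_1)]$ and $[\widetilde E_n(\rho_2)]$ as $T^{*}(f(\rho_1))$ and $T^{*}(f(\rho_2))$, and then uses Proposition~\ref{prop:no_atoms} to see that these are distinct. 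Your outline can, however, be repaired without any of that: from your concentration bounds you get integers $b_1<a_2$ (depending only on $C$) with $N^{(j-C)}_{M_j}\le b_1$ and $N^{(j-C)}_{M_j'}\ge a_2$ with probability $\ge 1-\eps$; by the Markov property and monotonicity this forces $\Prob\{T(M_j')>T(M_j)\}\ge(1-\eps)\,\Prob\{T(a_2)>T(b_1)\}>0$ uniformly in $j$. Since $T(M_j')\ge T(M_j)$ a.s., this gives $\liminf_j\Erw[T(M_j')-T(M_j)]>0$. If $T(M)-\log^{*}M$ converged in distribution, the $L^{r}$-boundedness (for any $r>1$) would give uniform integrability and hence $\Erw[T(M_j')-j]-\Erw[T(M_j)-j]\to 0$, a contradiction. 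That argument is elementary and in the spirit of your plan; what you actually wrote is not yet a proof.
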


In view of this result, it is natural to ask for the subsequential distributional limits of the above sequence for which we will need the \emph{modified iterated exponentials} (or the \emph{modified tetration}), for $\rho\ge 1$ recursively defined by
\begin{equation}\label{eq:def_E_{n}}
{E}_{0}(\rho)\,:=\,\rho\quad\text{and}\quad
{E}_{n}(\rho)\,:=\,e^{{E}_{n-1}(\rho)-1}\quad\text{for }n\in\N.
\end{equation}
The subtraction of $1$ in the above definition ensures that each ${E}_{n}$ is a strictly increasing continuous self-map of the interval $[1,+\infty)$ and $E_{n}(1)=1$. The standard tetration without this subtraction will be discussed in Subsection \ref{sec::st_tetration}.

\begin{Theorem}\label{mainT}
The following convergence of finite-dimensional distributions along the subsequence $([E_{n}(\rho)])_{n\in\N}$ holds true:
\begin{equation}\label{eq:T_fdd_conv}
(T([E_{n}(\rho)])-n)_{\rho>1}\ \todistrfd\ (T^{*}(\rho))_{\rho>1}\quad\text{as }n\to\infty,
\end{equation}
where $(T^{*}(\rho))_{\rho>1}$ is a stochastically continuous process with nondecreasing, c\`adl\`ag sample paths satisfying the stochastic fixed-point equation
\begin{equation}\label{T_equation}
(T^{*}(e^{\rho-1}))_{\rho>1}\ \ofdd\ (T^{*}(\rho) + 1)_{\rho>1}.
\end{equation}
In particular, for each $\rho>1$,
\begin{equation}\label{eq:subseq_limit_T}
T([E_{n}(\rho)])-n\ \todistr\ T^{*}(\rho)\quad\text{as }n\to\infty.
\end{equation}
The random variables $T^{*}(\rho)$, $\rho>1$, are integer-valued with $\Prob\{T^{*}(\rho)=k\}>0$ for all $k\in\Z$ and further pairwise distinct in law.
\end{Theorem}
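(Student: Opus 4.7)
My plan is to exploit the basic recursion
$$T(M)\ =\ n+T^{(n+1)}\bigl(N^{(n)}_{M}\bigr)\qquad\text{on }\{T(M)>n\}=\{N^{(n)}_M>1\},$$
where $T^{(n+1)}$ is an independent copy of $T$ built from the records indicators $\xi^{(j)}_i$ with $j>n$, independence being supplied by R\'enyi's theorem. Setting $M=[E_n(\rho)]$ reduces the asymptotics of $T([E_n(\rho)])-n$ to the distributional asymptotics of the survivor count $N^{(n)}_{[E_n(\rho)]}$ after $n$ rounds, plus a separate analysis of the complementary event $\{N^{(n)}_{[E_n(\rho)]}=1\}$ on which the election has already terminated by round $n$.

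The main technical step would be to prove that $N^{(n)}_{[E_n(\rho)]}\todistr N^*(\rho)$ for some $\N$-valued $N^*(\rho)$, jointly in finitely many $\rho$'s. The heuristic, driven by the records CLT $K(M)=\log M+O_P(\sqrt{\log M})$ iterated through independent copies $K^{(j)}$, is that after $n-j$ iterations the survivor count concentrates around $E_j(\rho)-1$: the noise introduced by the first iterations gets geometrically compressed by the subsequent logarithmic contractions (a perturbation $\Delta$ at level $E$ contracts to $\Delta/E$ after one more $\log$), while only the last $O(1)$ iterations (whose arguments are $O(1)$ for fixed $\rho$) contribute non-vanishing integer fluctuations. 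I would implement this rigorously via a Cauchy-in-distribution argument comparing $N^{(n)}_{[E_n(\rho)]}$ and $N^{(n+1)}_{[E_{n+1}(\rho)]}=K^{(n+1)}(N^{(n)}_{[E_n(e^{\rho-1})]})$ and tracking how a small perturbation at the innermost level is damped outward through the iterated random record maps. This is the principal obstacle.

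Granting the main step, f.d.d.\ convergence of $T([E_n(\rho)])-n$ would follow by splitting along $A_n:=\{N^{(n)}_{[E_n(\rho)]}>1\}$. On $A_n$ the recursion together with the independence of $T^{(n+1)}$ gives
$$T([E_n(\rho)])-n\ =\ T^{(n+1)}\bigl(N^{(n)}_{[E_n(\rho)]}\bigr)\ \todistr\ \widetilde T\bigl(N^*(\rho)\bigr)$$
with $\widetilde T$ an independent copy of $T$; on $A_n^c$ the identity $[E_n(\rho)]=[E_{n-j}(E_j(\rho))]$ yields $\Prob\{T([E_n(\rho)])=n-j\}\to\Prob\{N^*(E_j(\rho))=1\}-\Prob\{N^*(E_{j+1}(\rho))=1\}$ for each fixed $j\ge 0$. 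The fixed-point equation then follows from $[E_n(e^{\rho-1})]=[E_{n+1}(\rho)]$, which gives $T([E_n(e^{\rho-1})])-n=(T([E_{n+1}(\rho)])-(n+1))+1$ and hence $(T^*(e^{\rho-1}))_{\rho>1}\ofdd (T^*(\rho)+1)_{\rho>1}$ in the limit; joint convergence over several $\rho$'s comes from running the same argument in parallel with shared $\xi^{(j)}_i$'s. Monotonicity and c\`adl\`ag paths in $\rho$ are inherited from the prelimit ($M\mapsto T(M)$ nondecreasing and $\rho\mapsto [E_n(\rho)]$ nondecreasing c\`adl\`ag), stochastic continuity at each fixed $\rho>1$ holds because the random jump set of $T^*(\cdot)$ has diffuse marginals, and $\Prob\{T^*(\rho)=k\}>0$ for every $k\in\Z$ is read off from the above marginal formulas (positivity for $k>0$ from $\Prob\{T^*(\rho)=k\}=\sum_{m\ge 2}\Prob\{N^*(\rho)=m\}\Prob\{T(m)=k\}$; positivity for $k\le 0$ from strict monotonicity of $\sigma\mapsto\Prob\{N^*(\sigma)>1\}$), the latter strict monotonicity also separating the laws of $T^*(\rho)$ for different $\rho$.
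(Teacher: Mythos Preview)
Your route is genuinely different from the paper's, and the architecture you propose is coherent once the ``principal obstacle'' is resolved. But that obstacle is where essentially all the work lives, and your sketch does not resolve it. The paper does \emph{not} attack $N^{(n)}_{[E_n(\rho)]}$ directly at all. Instead it dualizes: the key observation is
\[
\{T(M)\le k\}=\{S_2^{(k)}>M\},\qquad \{N_M^{(n)}\ge j\}=\{S_j^{(n)}\le M\},
\]
so both the law of $T([E_n(\rho)])-n$ and the law of $N^{(n)}_{[E_n(\rho)]}$ are read off from the joint law of $(L_{n}(S_j^{(n)}))_{j\ge 1}$. The paper then proves a.s.\ convergence $L_n(S_j^{(n)})\to S_j^*$ by passing from forward to \emph{backward} iterations, $\eta_j^{(n)}=\nu^{(n)}\circ\cdots\circ\nu^{(1)}(j)$, and showing $\sum_n \Erw|L_n(\eta_j^{(n)})-L_{n-1}(\eta_j^{(n-1)})|<\infty$. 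This backward--iteration trick is precisely the rigorous implementation of your damping heuristic (a perturbation at level $E$ contracting by $1/E$), but it upgrades your ``Cauchy in distribution'' to an honest a.s.\ Cauchy sequence. A Cauchy-in-distribution argument on its own is delicate here: you would still need to identify the limit, and you would need enough control to get the fine properties below.

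Even granting convergence of $N^{(n)}_{[E_n(\rho)]}$, several of your downstream claims need nontrivial input that you have not supplied. Stochastic continuity of $T^*(\cdot)$ requires that the jump locations be diffuse, which in the paper is Prop.~\ref{prop:no_atoms}: $S_2^*$ has no atoms. The paper's proof of that uses the stochastic fixed-point equation together with the CLT $\,(S_k^*-k)/\sqrt k\Rightarrow N(0,1)$, both of which rest on the backward-iteration estimates. Likewise, your argument for $\Prob\{T^*(\rho)=k\}>0$ when $k\le 0$ needs \emph{strict} monotonicity of $\sigma\mapsto \Prob\{N^*(\sigma)>1\}=\Prob\{S_2^*\le\sigma\}$ on $(1,\infty)$, i.e.\ full support of $S_2^*$; this is again Prop.~\ref{prop:no_atoms} and is not obvious from your construction of $N^*$. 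And for $k>0$ you also need $\Prob\{N^*(\rho)\ge 2\}>0$, which again comes from the support statement. In short: your decomposition via $T(M)=n+T^{(n+1)}(N^{(n)}_M)$ is a legitimate alternative \emph{once} Theorem~\ref{main2} and Prop.~\ref{prop:no_atoms} are available, but the paper obtains those from Theorem~\ref{main1} (the $S_j^*$ limit via backward iterations), and that is the idea your proposal is missing.
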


\begin{Rem}\label{rem0:non-convergence}\rm
The fact that $T(M)-\log^{*}M$ is tight but \emph{not} convergent in distribution should not be surprising and matches a similar result by Fill et al. \cite[Cor.~1]{Fill+Mahmoud+Szpankowski:96} for the classical leader-election algorithm which uses iid Bernoulli trials instead of record times to determine the leader. The phenomenon is due to periodic fluctuations, and Theorem \ref{mainT} provides the right way of scaling (via modified tetrations) so as to identify all subsequential distributional limits. A similar result could be stated for the classical algorithm, but the scaling would instead be in terms of iterations of a linear function, for in this context the number of eliminated persons per round is approximately a fixed fraction $p$, where $p$ is determined by the chosen coin. Periodic fluctuations, typically of a very small order of magnitude, are pervasive in the analysis of algorithms and digital trees, see e.g. \cite{Flajolet+Roux+Vallee:10}, but also in other fields like the theory of branching processes, see e.g. \cite{AlsRoe:96,BigginsBingham:91,BigginsNada:93}.
\end{Rem}

\begin{Rem}\label{rem1:mainT}\rm
Regarding \eqref{eq:T_fdd_conv}, we point out that for a stochastic process with nondecreasing, c\`adl\`ag sample paths, convergence of finite-dimensional distributions implies weak convergence in the Skorokhod $D$-space endowed with the $M_{1}$-topology. On the other hand, provided that the limiting process is continuous in probability, it was claimed in~\cite[Thm.~3]{Bingham:72} but disproved in~\cite{Yamazato:09} that this holds even true if the Skorokhod space is endowed with the $J_{1}$-topology.
\end{Rem}

\begin{figure}
\centering
     \includegraphics[width=10cm]{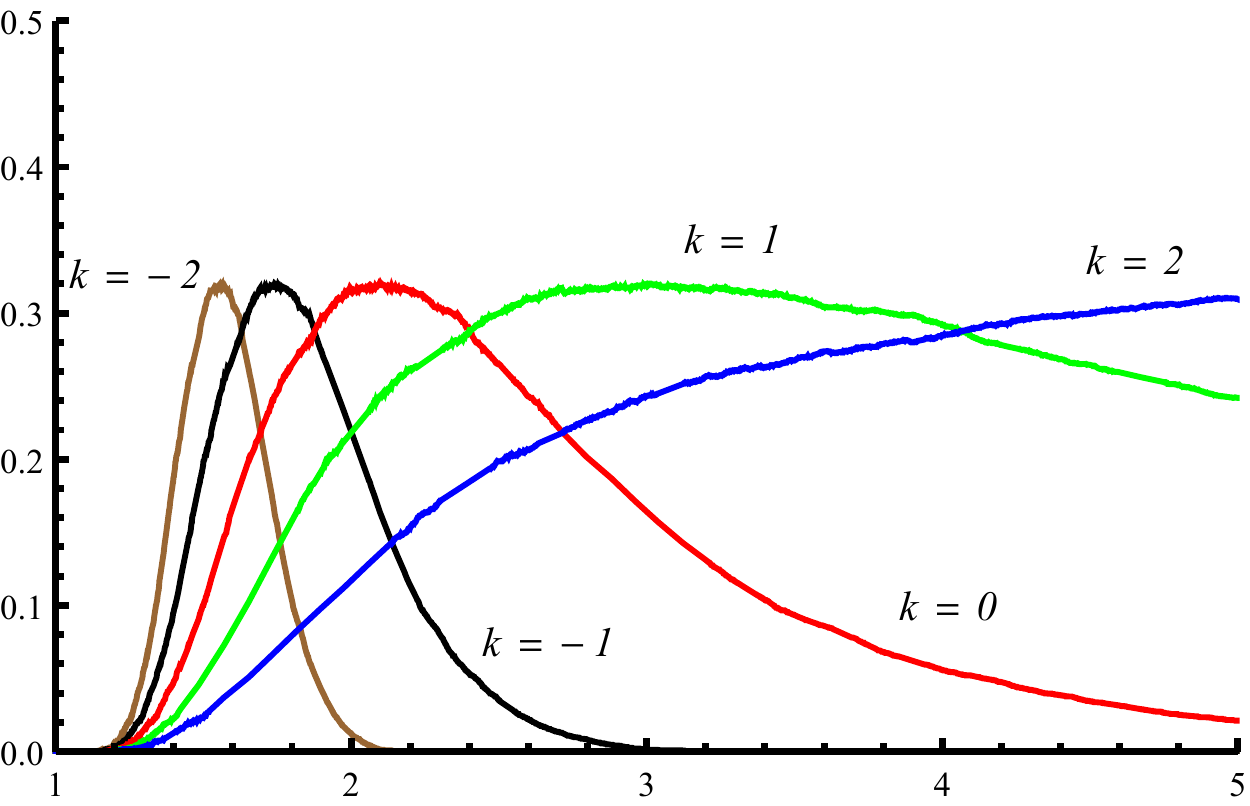}
\caption{The functions $\rho\mapsto \Prob\{T^{*}(\rho)=k\}$ for $k\in\{-2,-1,0,1,2\}$. Any of these curves can be used to generate all others because of the relation~\eqref{T_equation}.}
\label{fig:distr_T_rho}
\end{figure}


\begin{Rem}\label{rem2:mainT}\rm
The limiting process $(T^{*}(\rho))_{\rho>1}$ does not seem to belong to any known family of stochastic processes. As we have not been able to derive a closed-form analytic expression for $\Prob\{T^{*}(\rho) = k\}$, $k\in\Z$, we must leave this as an open problem. Yet, it is possible to compute these probabilities numerically by use of Monte-Carlo simulation; see Figure~\ref{fig:distr_T_rho} and Section~\ref{sec:simulation} for a description of the simulation method. 
\end{Rem}

The next result shows that there are no further weak subsequential limits beyond those given in \eqref{eq:subseq_limit_T}.

\begin{Prop}\label{prop:no_further_limits}
If $T(a_{n})-b_{n}$ converges in distribution to a nondegenerate random variable $Z$, where $(a_{n})_{n\in \N}\subset \N$ and $(b_{n})_{n\in\N}\subset \Z$ are sequences such that $a_{n}\to+\infty$, then there exist $\rho>1$ and  $c\in\Z$ such that
$$ Z\ \eqdist\ T^{*}(\rho) + c. $$
\end{Prop}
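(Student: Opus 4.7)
The plan is to express each $a_n$ in the tetration parametrization, invoke Theorem~\ref{mainT} via a monotonicity sandwich, and extract the integer shift $c$ from a tightness argument.

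First I would parametrize $a_n$. Since each $E_j\colon[1,\infty)\to[1,\infty)$ is a strictly increasing continuous bijection fixing $1$ and $E_{j+1}(2)=E_j(e)$, the intervals $[E_j(2),E_j(e))$, $j\ge 0$, partition $[2,\infty)$. Hence for $n$ sufficiently large there exist a unique integer $j_n\ge 1$ and a unique $\rho_n\in[2,e)$ with $a_n=E_{j_n}(\rho_n)$; as $a_n$ is an integer this gives $[E_{j_n}(\rho_n)]=a_n$, and $j_n\to\infty$ because $a_n\to\infty$. Passing to a subsequence (not relabeled), I may assume $\rho_n\to\rho^*\in[2,e]\subset(1,\infty)$ by compactness.

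The next step is the key distributional limit
$$T(a_n)-j_n\ \todistr\ T^*(\rho^*)\quad\text{along this subsequence}.$$
Three ingredients enter: (i) almost-sure monotonicity $M_1\le M_2\Rightarrow T(M_1)\le T(M_2)$, which is a one-line coupling argument using the same indicators $\xi_i^{(k)}$ across different starting sizes; (ii) Theorem~\ref{mainT} at the fixed parameters $\rho^*\pm\eps$; and (iii) the stated stochastic continuity of $(T^*(\rho))_{\rho>1}$. Concretely, for each small $\eps>0$ one has eventually $[E_{j_n}(\rho^*-\eps)]\le a_n\le[E_{j_n}(\rho^*+\eps)]$, so monotonicity yields
$$T([E_{j_n}(\rho^*-\eps)])-j_n\ \le\ T(a_n)-j_n\ \le\ T([E_{j_n}(\rho^*+\eps)])-j_n.$$
By Theorem~\ref{mainT} the two outer sequences converge in distribution to $T^*(\rho^*\mp\eps)$, and stochastic continuity sends these in probability to $T^*(\rho^*)$ as $\eps\downarrow 0$. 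A standard sandwich of c.d.f.'s at continuity points of $F_{T^*(\rho^*)}$ closes the argument; this is the only genuinely technical step.

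Finally I would extract the shift. The deterministic integer sequence $j_n-b_n=(T(a_n)-b_n)-(T(a_n)-j_n)$ is bounded, since both $T(a_n)-b_n\todistr Z$ and $T(a_n)-j_n\todistr T^*(\rho^*)$ are tight and a deterministic tight sequence must be bounded. By pigeonhole some $c\in\Z$ is attained infinitely often, so along a further subsequence $j_n-b_n\equiv c$. Consequently
$$T(a_n)-b_n=(T(a_n)-j_n)+c\ \todistr\ T^*(\rho^*)+c,$$
and by uniqueness of distributional limits $Z\eqdist T^*(\rho^*)+c$, with $\rho:=\rho^*>1$ and $c\in\Z$ as required. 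The main obstacle is the sandwich/continuity step in the middle paragraph; everything else is bookkeeping.
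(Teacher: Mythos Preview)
Your proof is correct and follows essentially the same approach as the paper: the paper likewise parametrizes $a_n=[E_{k(n)}(\rho_n)]$ with $\rho_n\in[2,e)$, passes to a subsequence with $\rho_n\to\rho$, and proves the sandwich step (isolated there as a separate lemma) via monotonicity, Theorem~\ref{mainT}, and stochastic continuity exactly as you outline. The only cosmetic difference is that the paper invokes the convergence-of-types lemma to extract the integer shift $c$, whereas you give the equivalent direct tightness/pigeonhole argument.
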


Finally, we provide a strong law of large numbers for $T(M)$ that will be deduced from the previous results.

\begin{Theorem}\label{LLN_for_T}
As $M\to\infty$,
$$ \frac{T(M)}{\log^* M}\ \to\ 1\quad\text{a.s. and in }L^{r}\text{ for each }r>0. $$
\end{Theorem}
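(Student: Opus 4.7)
The plan is to derive both modes of convergence from the $L^{r}$-boundedness of $T(M) - \log^{*} M$ established in Theorem~\ref{mainT0}, combined with the pathwise monotonicity of $M \mapsto T(M)$.

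For the $L^{r}$-convergence the argument is immediate. For each $r > 0$ Theorem~\ref{mainT0} supplies a constant $C_{r}$ with $\Erw|T(M) - \log^{*} M|^{r} \le C_{r}$, whence
$$\Erw\left|\frac{T(M)}{\log^{*} M} - 1\right|^{r}\ \le\ \frac{C_{r}}{(\log^{*} M)^{r}}\ \longrightarrow\ 0 \quad\text{as } M \to \infty,$$
since $\log^{*} M \uparrow \infty$.

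For the a.s.\ statement, I would first establish convergence along a tetration subsequence and then interpolate by monotonicity. Fix any $\rho > 1$ and set $M_{n} := [E_{n}(\rho)]$. Peeling off one exponential at a time in the recursion $E_{n}(\rho) = e^{E_{n-1}(\rho) - 1}$ via the identity $\log^{*} x = 1 + \log^{*}(\log x)$ (valid for $x \ge 1$) shows that $\log^{*} M_{n} = n + c(\rho)$ for all $n$ large enough and some integer $c(\rho)$; in particular $\log^{*} M_{n+1}/\log^{*} M_{n} \to 1$. Applying Markov's inequality to the $L^{r}$-bound of Theorem~\ref{mainT0} with any $r > 1$ then yields, for every $\eps > 0$,
$$\Prob\bigl\{|T(M_{n}) - \log^{*} M_{n}| > \eps n\bigr\}\ \le\ \frac{C_{r}}{(\eps n)^{r}},$$
which is summable, so the first Borel--Cantelli lemma delivers $T(M_{n})/\log^{*} M_{n} \to 1$ almost surely. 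To pass from the subsequence to all $M$, pick $n$ with $M_{n} \le M < M_{n+1}$ and use the monotonicity of $T$ and of $\log^{*}$ to sandwich
$$\frac{T(M_{n})}{\log^{*} M_{n+1}}\ \le\ \frac{T(M)}{\log^{*} M}\ \le\ \frac{T(M_{n+1})}{\log^{*} M_{n}}.$$
Both outer quotients tend a.s.\ to $1$ because of the subsequential convergence together with $\log^{*} M_{n+1}/\log^{*} M_{n} \to 1$, and the claim follows.

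The entire argument is essentially routine once Theorem~\ref{mainT0} is at hand; the only place requiring a small amount of bookkeeping is the verification that $\log^{*} M_{n} = n + O(1)$, which is a direct consequence of the defining recursion for $E_{n}(\rho)$. I do not anticipate any substantive obstacle.
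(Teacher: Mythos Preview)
Your proof is correct and follows essentially the same route as the paper's: $L^{r}$-convergence directly from the $L^{r}$-bound, then Markov plus Borel--Cantelli along a tetration subsequence, then a monotonicity sandwich. The paper uses $[\widetilde{E}_{n}(0)]$ instead of $[E_{n}(\rho)]$, which has the minor convenience that $\log^{*}\widetilde{E}_{n}(0)=n$ exactly and spares the bookkeeping.

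One small imprecision worth flagging: your ``peeling'' step gives $\log^{*}E_{n}(\rho)=1+\log^{*}(E_{n-1}(\rho)-1)$, not $1+\log^{*}E_{n-1}(\rho)$, so the exact identity $\log^{*}M_{n}=n+c(\rho)$ does not fall out as stated (a tower threshold could in principle lie in $(E_{n-1}(\rho)-1,E_{n-1}(\rho)]$). But as you yourself note, only $\log^{*}M_{n}=n+O(1)$ is needed, and that is easily obtained by sandwiching $E_{n}(\rho)$ between two shifted standard tetrations.
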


\subsection{Positions of the persons who survived the first $n$ rounds}

We next turn to the asymptotic behavior of the process $(S_{j}^{(n)})_{j\in\N}$, as $n\to\infty$, recalling that $S_{j}^{(n)}$ is the original label of the person who survived $n$ rounds and whose number after $n$ rounds is $j$. Here we need the \textit{modified iterated logarithms}, for $\rho\ge 1$ recursively defined by
\begin{equation}\label{eq:def_L_{n}}
{L}_{0}(\rho)\,:=\,\rho\quad\text{and}\quad {L}_{n}(\rho)\,:=\,1+\log {L}_{n-1}(\rho)
\quad\text{for }n\in\N.
\end{equation}
Obviously, each $L_{n}$ is a strictly increasing continuous function which maps the interval $[1,+\infty)$ onto itself and $L_n(1)=1$. The functions $L_{n}$ and $E_{n}$ are inverse to each other, viz.
$$ {L}_{n}({E}_{n}(s))\ =\ {E}_{n}({L}_{n}(s))\ =\ s\quad\text{for }s\ge 1. $$
Moreover, for arbitrary integers $n\ge m\ge 0$, we further have
\begin{equation}\label{eq:group_property}
{L}_{n}({E}_m(s))\ =\ {L}_{n-m}(s)\quad\text{and}\quad {L}_m({E}_{n}(s))\ =\ {E}_{n-m}(s),
\quad s\ge 1.
\end{equation}
We therefore define $L_{-n}:= E_{n}$ and $ E_{-n}:=  L_{n}$ for $n\in\N_{0}$ and note that
the set of functions $\{ E_{n}\}_{n\in\Z}$ (or, equivalently, $\{ L_{n}\}_{n\in\Z}$)
forms an infinite cyclic subgroup of the group $(\mathcal{C}^{\uparrow}([1,\infty)),\circ)$ of continuous, strictly increasing functions mapping $[1,\infty)$ onto itself with composition as the group operation.

\begin{Theorem}\label{main1}
There exist random variables $1=S^{*}_{1}\le S^{*}_{2} \le \ldots$ such that
\begin{equation}\label{basic_convergence}
({L}_{n}(S^{(n)}_{1}),{L}_{n}(S^{(n)}_{2}),{L}_{n}(S^{(n)}_{3}),\ldots)\ \todistrfd\  ({S}^{*}_{1},S^{*}_{2},{S}^{*}_{3},\ldots)
\end{equation}
as $n\to\infty$, and the limit vector satisfies the stochastic fixed-point equation
\begin{equation}\label{fp_limit_S}
({S}^{*}_{1},{S}^{*}_{2},{S}^{*}_{3},\ldots)\ \eqdist\ ({L}_{1}({S}^{*}_{\nu(1)}),{L}_{1}({S}^{*}_{\nu(2)}),{L}_{1}({S}^{*}_{\nu(3)}),\ldots),
\end{equation}
where the sequence $(\nu(j))_{j\in\N}$ of record times on the right-hand side is independent of $({S}^{*}_{j})_{j\in\N}$ and given by \eqref{nu_definition}.
\end{Theorem}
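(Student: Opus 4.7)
\emph{Step 1: recursion.} The plan is to recast the process $(L_n(S_j^{(n)}))_{j\in\N}$ as a random recursion, transfer it to a distributionally equivalent ``backward'' construction on which almost sure convergence can be proved, and then pass to the limit to obtain the fixed-point equation. By R\'enyi's theorem we may realize $S_j^{(n)} = \nu^{(1)}\circ\cdots\circ\nu^{(n)}(j)$, where $(\nu^{(k)})_{k\in\N}$ are independent copies of the record-time process $\nu$. Splitting off the last factor yields $S_j^{(n+1)} = S^{(n)}_{\hat\nu(j)}$ with $\hat\nu := \nu^{(n+1)} \eqdist \nu$ independent of $(S_j^{(n)})_{j\in\N}$, and using $L_{n+1} = L_1\circ L_n$ one obtains the random recursion $Z_j^{(n+1)} = L_1(Z_{\hat\nu(j)}^{(n)})$ for $Z_j^{(n)}:=L_n(S_j^{(n)})$; the case $j=1$ is consistent with the trivial identity $Z_1^{(n)}\equiv 1$ arising from $S_1^{(n)}\equiv 1$ and $L_n(1)=1$.

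\emph{Step 2: almost-sure coupling.} Since $(\nu^{(1)},\ldots,\nu^{(n)})\ofdd(\nu^{(n)},\ldots,\nu^{(1)})$ as $n$-tuples of iid processes, the reversed composition $T_j^{(n)}:=\nu^{(n)}\circ\cdots\circ\nu^{(1)}(j)$ satisfies $T_j^{(n)}\ofdd S_j^{(n)}$ jointly in $j$, hence $\widetilde Z_j^{(n)}:=L_n(T_j^{(n)})\ofdd Z_j^{(n)}$. The advantage is pathwise tractability: from $T_j^{(n+1)} = \nu^{(n+1)}(T_j^{(n)})$ one gets
$$ \widetilde Z_j^{(n+1)} - \widetilde Z_j^{(n)}\ =\ L_n\bigl(1+\log\nu^{(n+1)}(T_j^{(n)})\bigr)-L_n\bigl(T_j^{(n)}\bigr). $$
The mean value theorem together with the product formula $L_n'(x) = 1/(x\,L_1(x)\cdots L_{n-1}(x))$ reduces matters to bounding $|\log\nu^{(n+1)}(m)-m|$ at the tetrationally large argument $m=T_j^{(n)}$. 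Concentration for the record-counting sum $K(\cdot)$ (a sum of independent Bernoullis), applied conditionally on $T_j^{(n)}$ via Borel--Cantelli, yields $|\log\nu^{(n+1)}(T_j^{(n)})-T_j^{(n)}|=O(\sqrt{T_j^{(n)}\log\log T_j^{(n)}})$ almost surely. Since $T_j^{(n)}$ is of tetrational order $E_n(j)$, the group property $L_k(E_n(j))=E_{n-k}(j)$ forces $L_n'(T_j^{(n)})$ to be super-exponentially small in $n$; consequently the increments $|\widetilde Z_j^{(n+1)}-\widetilde Z_j^{(n)}|$ are summable almost surely, so $\widetilde Z_j^{(n)}\to S_j^*$ almost surely for each $j$. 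These limits live on a common probability space, giving joint convergence; transferring back via the distributional identity $(\widetilde Z_j^{(n)})\ofdd(Z_j^{(n)})$ yields $(Z_j^{(n)})_{j\in\N}\todistrfd(S_j^*)_{j\in\N}$.

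\emph{Step 3: fixed-point equation and main obstacle.} Letting $n\to\infty$ in the recursion of Step~1, via continuity of $L_1$, independence of $\hat\nu$ from $(Z_j^{(n)})_j$, and the just-proved convergence, produces
$$ (S_j^*)_{j\in\N}\ \eqdist\ (L_1(S^*_{\nu(j)}))_{j\in\N} $$
with $(\nu(j))_{j\in\N}$ independent of $(S_j^*)_{j\in\N}$; the monotonicity $1=S_1^*\le S_2^*\le\ldots$ is inherited from the pre-limit variables via continuity of each $L_n$. The technical heart is the increment estimate in Step~2: one must control the pathwise fluctuations $\log\nu^{(n+1)}(B)-B$ uniformly along the tetrationally growing random backbone $B=T_j^{(n)}$ and propagate them through $n$ iterated logarithms. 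The interplay between tetrational growth of $T_j^{(n)}$ and matching super-exponential decay of $L_n'$ at tetrationally large points is what makes summability work; the remainder of the argument is essentially bookkeeping.
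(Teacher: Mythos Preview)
Your overall architecture---pass to the backward iteration $T_j^{(n)}=\nu^{(n)}\circ\cdots\circ\nu^{(1)}(j)$, prove almost-sure convergence of $\widetilde Z_j^{(n)}=L_n(T_j^{(n)})$ by showing summability of the increments, then read off the fixed-point equation---is exactly the paper's strategy. Step~1 and Step~3 are fine. The problem is in Step~2, where the increment estimate is circular.

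You write: ``Since $T_j^{(n)}$ is of tetrational order $E_n(j)$, the group property $L_k(E_n(j))=E_{n-k}(j)$ forces $L_n'(T_j^{(n)})$ to be super-exponentially small in $n$.'' But the assertion that $T_j^{(n)}$ is of tetrational order is precisely (a reformulation of) what you are trying to prove: convergence of $L_n(T_j^{(n)})$ to a finite limit \emph{is} the statement that $T_j^{(n)}\asymp E_n(\text{const})$. No a~priori growth bound on $T_j^{(n)}$ has been established at this point; in fact, with probability $1/m!$ one has $\nu(m)=m$, so $T_j^{(n)}$ can stall at any fixed level for arbitrarily many steps with positive probability, and your Borel--Cantelli argument for the fluctuation $|\log\nu^{(n+1)}(T_j^{(n)})-T_j^{(n)}|$ also tacitly needs $T_j^{(n)}\to\infty$ fast enough to make the exceptional probabilities summable. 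As written, the argument assumes its conclusion.

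The paper breaks this circularity by working in $L^1$ rather than pathwise. Using only the crude bound $L_{n-1}'(x)\le 1/x$ together with the moment estimates of Proposition~\ref{nu_{n}_prop}, one obtains
\[
\Erw\bigl|\widetilde Z_j^{(n)}-\widetilde Z_j^{(n-1)}\bigr|\ \le\ C\bigl(\Erw(1/T_j^{(n-1)})\bigr)^{1/4}.
\]
The key input is then the elementary contraction lemma $\Erw(1/\nu(m))\le \tfrac34\cdot\tfrac1m$ for $m\ge 2$ (proved via Williams' representation), which iterates to $\Erw(1/T_j^{(n)})\le (3/4)^n/j$. This yields geometric decay of the increments in $L^1$ without ever needing to know how fast $T_j^{(n)}$ grows pathwise. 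If you want to salvage a pathwise argument, you could use this same lemma with Markov plus Borel--Cantelli to get $T_j^{(n)}\ge (4/3)^{n/2}$ eventually a.s.; combined with $L_n'(x)\le 1/x$ and your fluctuation bound, that would make the increments summable. But some such non-circular growth input is indispensable.
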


\begin{Rem}\label{rem:main1}\rm
Note that the normalizing functions in Theorem \ref{main1} are ${L}_{n}$ rather than
\begin{equation*}
\widetilde L_{n}(\cdot) := \underbrace{\log\circ\ldots\circ\log}_{n \text{ times}}(\cdot),
\end{equation*}
which the reader might deem more natural. However, the choice of $\widetilde L_{n}$ would lead to inevitable complications because $\widetilde L_{n}(S_{j}^{(n)})$ is undefined on a set of positive probability, viz.
$$ \left\{S_{j}^{(n)}\leq \underbrace{\exp\circ\ldots\circ\exp}_{n-2 \text{ times}}(1)\right\}. $$
We refer to Subsection~\ref{sec::st_tetration} for further discussion.
\end{Rem}

\begin{Rem}\rm
Standard arguments (see \cite[Appendix D, Thm.~D.1 and Cor.~D.2]{Sigman:95}) imply that Theorem~\ref{main1} can be stated in terms of point processes as follows. On the space of point measures on $[1,+\infty)$ endowed with the vague topology, the following weak convergence holds true:
$$ \sum_{j=1}^{\infty} \delta_{{L}_{n}({S}_{j}^{(n)})}\ \toweak\ \sum_{j=1}^{\infty} \delta_{ {S}^{*}_{j}}\quad\text{as }n\to\infty, $$
where $\delta_x$ denotes the Dirac point mass at $x$.
\end{Rem}

\begin{figure} [ht]
\centering
     \includegraphics[width=10cm]{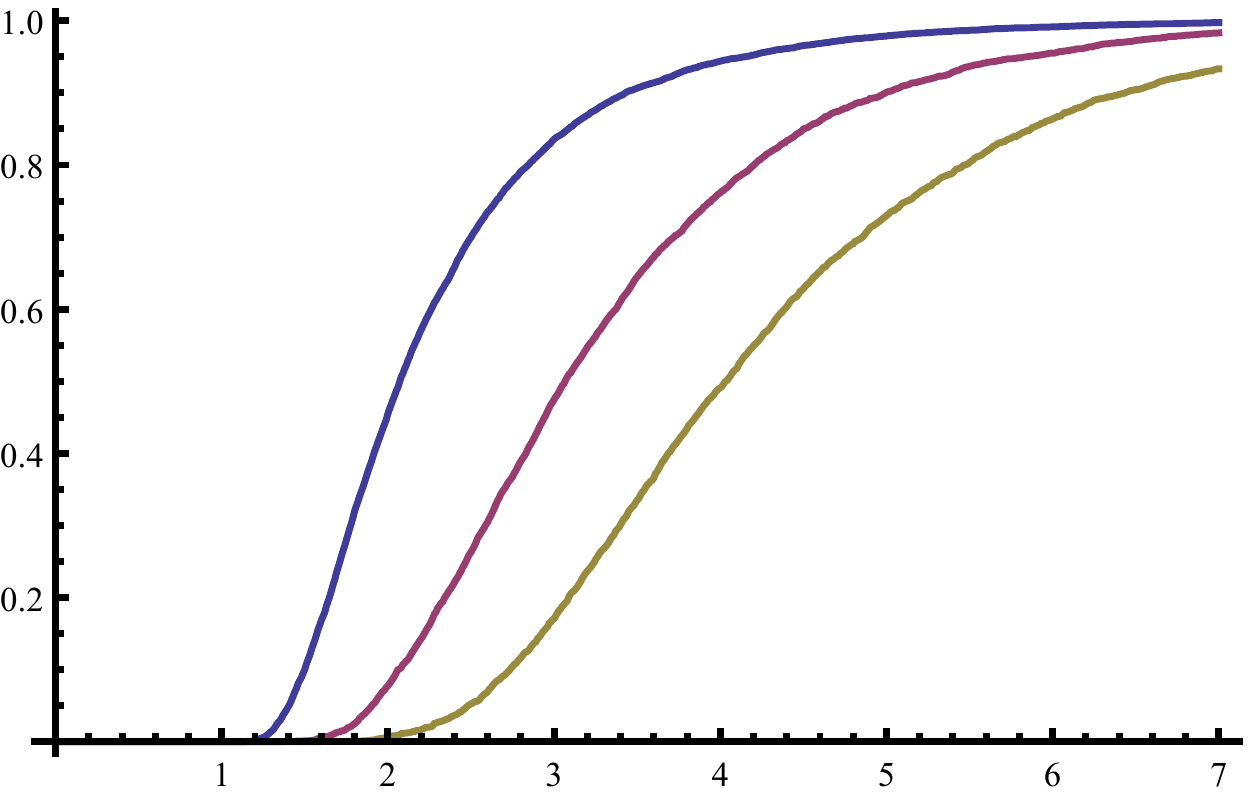}
\caption{Distribution functions of  $S_{2}^{*}$, $S_{3}^{*}$, $S_{4}^{*}$ (from the left to the right).}
\label{fig:distr_funct_S}
\end{figure}


Since the constant vector $(1,1,\ldots)$ also satisfies the fixed-point equation \eqref{fp_limit_S}, one may wonder whether the limit vector in
Theorem \ref{main1} is random. The next result, giving a strong law of large numbers as well as a central limit theorem for $S_{k}^{*}$, as $k\to\infty$, confirms that the process $({S}_{j}^{*})_{j\in\N}$ is nondegenerate.

\begin{Theorem}\label{SLLN/CLT_S_{n}}
The sequence $(S^{*}_{k})_{k\in\N}$ satisfies
$$ \lim_{k\to\infty}\frac{S^{*}_{k}}{k}\ =\ 1\quad\text{a.s.},\quad\lim_{k\to\infty} \frac{\Erw S_{k}^{*}}{k} =1,\quad\text{and}\quad\frac{S^{*}_{k} - k}{\sqrt k}\ \todistr\ {\rm{N}}(0,1), $$
where ${\rm{N}}(0,1)$ denotes the standard normal distribution.
\end{Theorem}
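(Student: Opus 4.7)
My plan is to prove (a) the LLN in probability $S_k^*/k \iprob 1$, then (b) the CLT via the one-dimensional fixed-point equation and Slutsky, and finally (c) upgrade (a) to a.s.~convergence via the monotonicity of $(S_k^*)_{k\in\N}$ and $\Erw[S_k^*]/k\to 1$ via uniform integrability. Throughout I will rely on the classical records asymptotics (see, e.g., \cite[Ch.~2]{Nevzorov:01})
\begin{equation*}
\frac{\log\nu(k)}{k}\to 1 \text{ a.s.}\qquad \text{and}\qquad \frac{\log\nu(k)-k}{\sqrt{k}}\dod {\rm N}(0,1)\quad\text{as }k\to\infty,
\end{equation*}
which follow from the SLLN and CLT applied to the asymptotically iid $\operatorname{Exp}(1)$ log-increments $\log\nu(k)-\log\nu(k-1)$.

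The CLT is deduced from (a) as follows. The one-dimensional form of~\eqref{fp_limit_S} reads $S_k^*\eqdist 1+\log S_{\nu(k)}^*$ where $\nu$ is independent of $(S_j^*)_{j\in\N}$, so
\begin{equation*}
\frac{S_k^*-k}{\sqrt{k}}\;\eqdist\;\frac{\log\nu(k)-(k-1)}{\sqrt{k}}+\frac{\log\bigl(S_{\nu(k)}^*/\nu(k)\bigr)}{\sqrt{k}}.
\end{equation*}
The first summand converges to ${\rm N}(0,1)$ by the record CLT; the second is $o_{\Prob}(\sqrt{k})$ since $\nu(k)\to\infty$ a.s.~and (a) gives $S_m^*/m\iprob 1$. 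Slutsky's theorem completes the argument.

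For (a), I plan a second-moment argument driven by the fixed-point equation. Taking expectations yields $\Erw[S_k^*]=1+\Erw[\log S_{\nu(k)}^*]$, and a direct computation with the conditional law $\Prob\{\nu(i+1)=m+j\mid \nu(i)=m\}=m/((m+j-1)(m+j))$ gives $\Erw[\log(\nu(i+1)/\nu(i))\mid \nu(i)=m]\to 1$ as $m\to\infty$, so $\Erw[\log\nu(k)]=k+O(1)$. Splitting $\log S_m^*=\log m+\log(S_m^*/m)$ and performing an induction on $k$ that controls both $\Erw[S_k^*]$ and $\Var(S_k^*)$ should yield $\Erw[S_k^*]=k+O(1)$ and $\Var(S_k^*)=O(k)$; Chebyshev then gives (a). The a.s.~refinement uses the monotonicity of $(S_k^*)$: along $k_j:=\lceil(1+\varepsilon)^j\rceil$ the variance bound makes the Chebyshev tails summable, so Borel--Cantelli gives $S_{k_j}^*/k_j\to 1$ a.s., and the sandwich $S_{k_j}^*/k_{j+1}\le S_k^*/k\le S_{k_{j+1}}^*/k_j$ for $k_j\le k<k_{j+1}$, together with $\varepsilon\darr 0$, yields $S_k^*/k\to 1$ a.s. Finally, $\Erw[S_k^*]/k\to 1$ follows from the a.s.~statement and uniform integrability provided by the moment bound.

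The main obstacle I anticipate is the uniform moment bound $\Erw[S_k^*]=k+O(1)$: the naive Jensen step $\Erw[\log S_{\nu(k)}^*]\le\log\Erw[S_{\nu(k)}^*]$ is useless because $\Erw[\nu(k)]=\infty$, so the estimate must be closed through the finiteness of $\Erw[\log\nu(k)]$ and a tight control of the deviation $S_m^*-m$ rather than of $S_m^*$ itself.
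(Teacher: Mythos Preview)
Your CLT argument (b), conditional on the WLLN (a), is correct and is essentially what the paper does: both reduce to the record CLT for $\log\nu(k)$ by showing that the remainder is $o_{\Prob}(1)$. The difference is only in how the remainder is controlled.

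The genuine gap is in step~(a). The fixed-point equation $S_k^*\eqdist 1+\log S_{\nu(k)}^*$ relates $S_k^*$ to $S_m^*$ for $m=\nu(k)\ge k$, so an induction on $k$ in the usual upward direction is impossible: to bound the $k$th moment you would need the bound for \emph{all} $m\ge k$, which is the very thing you are trying to prove. The plan ``induction on $k$ that controls both $\Erw[S_k^*]$ and $\Var(S_k^*)$'' therefore has no base case and no inductive step in the stated form. You flag this yourself as the main obstacle, but the proposal does not contain a mechanism to close it. A related issue is that finiteness of $\Erw S_k^*$ is not even established a~priori from the fixed-point equation alone; you need some external input.

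The paper sidesteps this completely by working with the explicit backward construction $\cX_{n,k}=L_n(\nu^{(n\downarrow 1)}(k))$ from the proof of Theorem~\ref{main1}. The key quantitative estimate there, namely $\sum_{n\ge 2}\Erw|\cX_{n,k}-\cX_{n-1,k}|\le C'k^{-1/4}$ (see~\eqref{eq:sum_expect_diff_X_{n}_j}), yields directly $\Erw|\cX_k^*-L_1(\nu^{(1)}(k))|\le C'k^{-1/4}$. This single $L^1$ bound simultaneously gives (i) the SLLN by Borel--Cantelli with the summable tail $k^{-5/4}$, (ii) the expectation asymptotics via $\Erw|\cX_k^*-k|\le C''\sqrt{k}$, and (iii) the CLT by transfer from $L_1(\nu(k))$. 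In other words, the paper replaces your bootstrap through the fixed-point equation by a one-shot comparison of $S_k^*$ with $1+\log\nu(k)$ obtained from the iterated-function-system representation; this is where the missing moment control comes from.
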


The next theorem states that, far away from the origin, the spacings of the point process $\sum_{i=1}^{\infty} \delta_{S_{i}^{*}}$ look approximately like the spacings of a standard Poisson point process.

\begin{Theorem}\label{limit_poisson}
Let $\cE_{1},\cE_{2},\ldots$ be independent standard exponential random variables. Then, as $k\to\infty$,
\begin{equation}\label{eq:conv S_j^*}
(S_{k+1}^{*} - S_{k}^{*},S_{k+2}^{*} - S_{k+1}^{*},\ldots,S_{k+m}^{*} - S_{k+m-1}^{*},\ldots)\ \overset{f.d.d.}{\longrightarrow}\ (\cE_{1},\cE_{2},\ldots, \cE_m,\ldots).
\end{equation}
\end{Theorem}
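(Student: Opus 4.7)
The plan is to reduce the problem, via the fixed-point equation~\eqref{fp_limit_S} and the strong law from Theorem~\ref{SLLN/CLT_S_{n}}, to a joint weak-convergence statement about log-ratios of record times, and then verify the iid exponential limit by an explicit Laplace-transform computation.

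By~\eqref{fp_limit_S}, for every $k,m\in\N$,
\begin{equation*}
(S_{k+i}^{*}-S_{k+i-1}^{*})_{i=1}^{m}\ \eqdist\ \bigl(\log(S_{\nu(k+i)}^{*}/S_{\nu(k+i-1)}^{*})\bigr)_{i=1}^{m},
\end{equation*}
where $(\nu(j))_{j\geq 1}$ is an independent copy of the record-time process~\eqref{nu_definition}. Since $\nu(k+i)\to\infty$ and $S_j^{*}/j\to 1$ a.s.~by Theorem~\ref{SLLN/CLT_S_{n}}, we have $\log S_{\nu(k+i)}^{*}-\log\nu(k+i)\to 0$ almost surely as $k\to\infty$. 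Hence the claim reduces to the joint weak convergence
\begin{equation*}
(\log(\nu(k+i)/\nu(k+i-1)))_{i=1}^{m}\ \todistrfd\ (\cE_{1},\ldots,\cE_{m})\quad\text{as } k\to\infty.
\end{equation*}

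To analyze this, I would use R\'enyi's representation: writing $q_k:=1-U_{\nu(k)}\eqdist e^{-\Lambda_k}$ with $\Lambda_k=E_1+\ldots+E_k$ for iid standard exponentials $(E_j)$, the gaps $G_j:=\nu(j+1)-\nu(j)$ are, conditionally on $(q_\ell)_{\ell\geq 1}$, independent with $G_j\sim\mathrm{Geom}(q_j)$. Setting $W_k:=q_{k-1}\nu(k)$ and $\zeta_k:=q_kG_k$ (which are asymptotically standard exponential), a direct computation gives $\log(\nu(k+1)/\nu(k))=\log(1+e^{E_k}\zeta_k/W_k)$ together with the Markov recursion $W_{k+1}=e^{-E_k}W_k+\zeta_k$. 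Solving the Laplace-transform fixed-point equation $s(1+s)\phi(s)=\int_0^s\phi(v)\,dv$, obtained from this recursion by conditioning on its first step, yields $\phi(s)=1/(1+s)^2$, so the invariant law of $(W_k)$ is $\mathrm{Gamma}(2,1)$, and the contractivity $e^{-E_k}<1$ yields joint weak convergence of $(W_{k+i-1},E_{k+i-1},\zeta_{k+i-1})_{i=1}^m$ to a limit in which $W_1\sim\mathrm{Gamma}(2,1)$ drives the chain and all innovations are iid exp(1).

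The main obstacle is to show that, in this limit, the joint Laplace transform of $Y_i:=\log(1+e^{E_i}\zeta_i/W_i)$, $i=1,\ldots,m$, factorizes as $\prod_{i=1}^m(1+s_i)^{-1}$. To do so I would use the algebraic identity $Y_i=E_i+\log W_{i+1}-\log W_i$, which follows from $W_{i+1}=e^{-E_i}(W_i+e^{E_i}\zeta_i)$, and evaluate the resulting expectation by iterated conditioning along the recursion. At each step, conditioning on $W_i$ and substituting $v=e^{-u}W_i$ in the $E_i$-integration reduces the $i$th layer to an integral that, after applying Fubini, collapses via the identity $\int_0^\infty v^s\,\Gamma(1-s,v)\,dv=(1+s)^{-1}$ (the same calculation that already yields the marginal exp(1) distribution). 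Orchestrating the nested integrals so that consecutive layers decouple cleanly, contributing exactly one factor $(1+s_i)^{-1}$ per layer, is the delicate point; once this is verified, the asymptotic independence and exponentiality claimed by the theorem follow.
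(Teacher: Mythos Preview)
Your reduction in the first two paragraphs is correct and, in a sense, cleaner than the paper's: you use only the stated fixed-point equation~\eqref{fp_limit_S} and the SLLN from Theorem~\ref{SLLN/CLT_S_{n}}, whereas the paper goes back to the backward iterates $\cX_{n,k}$ and invokes the $L^{1}$ estimate~\eqref{eq:E_X_minus_L_{n}u}, namely $\Erw|\cX_{k}^{*}-L_{1}(\nu^{(1)}(k))|\le C'/k^{1/4}$, to couple $\cX_{k+l}^{*}-\cX_{k+l-1}^{*}$ directly to $\log\big(\nu^{(1)}(k+l)/\nu^{(1)}(k+l-1)\big)$. Both routes arrive at exactly the same place: the joint convergence of $\big(\log(\nu(k+i)/\nu(k+i-1))\big)_{i=1}^{m}$ to iid standard exponentials.

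The difference is what happens next. The paper simply cites this as a classical fact about record times (Shorrock \cite[Theorem~5]{Shorrock:72}; see also \cite[p.~64]{Nevzorov:01}): the ratios $\nu(k+l)/\nu(k+l-1)$ converge jointly to iid Pareto variables, and taking logarithms gives iid exponentials. You instead attempt to prove this from scratch via the recursion $W_{k+1}=e^{-E_{k}}W_{k}+\zeta_{k}$ and a nested Laplace-transform computation, and you yourself flag the final decoupling step as ``the delicate point'' that is not actually carried out. That step is genuinely nontrivial: your $\zeta_{k}=q_{k}G_{k}$ are only \emph{asymptotically} exponential and depend on $q_{k}$, hence on $E_{k}$, so the chain is not time-homogeneous before passing to the limit, and the passage to the limiting chain plus the factorization of the joint transform is precisely the content of Shorrock's theorem. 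As written, the argument is incomplete. The fix is to delete the entire Laplace-transform apparatus and cite the known result; your first two paragraphs already do all the work the theorem requires.
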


We will frequently make use of the fact, stated in the subsequent proposition, that the law of $S_{k}^{*}$ for $k\ge 2$ (recall that $S_{1}^{*}=1$) is continuous and that, for $m\ge 2$, the joint law of $(S_{2}^{*},\ldots,S_{m}^{*})$ puts positive mass on every open $(m-1)$-dimensional interval centered at some $(x_{2},\ldots,x_{m})$ with $1<x_{2}<\ldots<x_m$.

\begin{Prop}\label{prop:no_atoms}
The law of $S_{k}^{*}$ is continuous for every $k\ge 2$ and
\begin{align}
\Prob\{{S}_{2}^{*} \in [\alpha_{2},\beta_{2}], \ldots, {S}_m^{*} \in [\alpha_m,\beta_m]\}\,>\,0
\label{eq:all values attained}
\end{align}
for all $1<\alpha_{2}<\beta_{2}<\ldots< \alpha_m<\beta_m$ and $m\in\N$.
\end{Prop}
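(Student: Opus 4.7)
The plan is to derive both assertions from the stochastic fixed-point equation \eqref{fp_limit_S}, iterated $n$ times:
$$
(S_{2}^{*},\ldots,S_{m}^{*}) \,\eqdist\, \bigl(L_{n}(\tilde S_{J_{j}^{(n)}}^{*})\bigr)_{j=2}^{m},
$$
where $(J_{2}^{(n)},\ldots,J_{m}^{(n)}) \eqdist (S_{2}^{(n)},\ldots,S_{m}^{(n)})$ is obtained from $n$ successive compositions of independent record-time processes and $\tilde S^{*}$ is an independent copy of $(S_{j}^{*})_{j\in\N}$.

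For continuity of $S_{k}^{*}$, I will first show that $a_{k}:=\Prob\{S_{k}^{*}=1\}=0$ for all $k\ge 2$. Since $L_{1}(x)=1$ iff $x=1$, equation \eqref{fp_limit_S} yields $a_{k} = \sum_{M\ge k} \Prob\{\nu(k)=M\}\, a_{M}$ for $k\ge 2$. The CLT in Theorem~\ref{SLLN/CLT_S_{n}} implies $a_{M}\to 0$ as $M\to\infty$, so $\bar a := \sup_{k\ge 2} a_{k}$ is attained at some $k_{0}$; inserting this $k_{0}$ into the identity and using that $\Prob\{\nu(k_{0})\ge M\}>0$ for all $M$ forces $\bar a = 0$. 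Now suppose, for contradiction, that $p_{0}:=\Prob\{S_{k}^{*}=y_{0}\}>0$ for some $y_{0}>1$. The iterated identity reads
$$
p_{0} = \sum_{M\ge k} \Prob\{J_{k}^{(n)}=M\}\,\Prob\{\tilde S_{M}^{*}=E_{n}(y_{0})\}.
$$
P\'olya's theorem applied to the CLT yields the \emph{uniform} estimate $\sup_{y\in\R}\Prob\{S_{M}^{*}=y\} \le C/\sqrt{M}$ for $M\ge M_{0}$, obtained by comparing $F_{S_{M}^{*}}(y+\tfrac{1}{2})-F_{S_{M}^{*}}(y-\tfrac{1}{2})$ with the increment of a normal distribution on a unit interval. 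Since $\Prob\{J_{k}^{(n)}\le M_{0}\} = \Prob\{L_{n}(S_{k}^{(n)})\le L_{n}(M_{0})\}$ and $L_{n}(M_{0})\to 1$, the first step $a_{k}=0$ gives $\Prob\{J_{k}^{(n)}\le M_{0}\}\to 0$; combined with $\Erw[1/\sqrt{J_{k}^{(n)}}]\to 0$ by dominated convergence (using $1/\sqrt{J_{k}^{(n)}}\le 1/\sqrt{k}$), splitting the sum at $M_{0}$ produces an upper bound tending to $0$, the desired contradiction.

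For the positive-mass claim, given $1<\alpha_{2}<\beta_{2}<\ldots<\beta_{m}$, I pick a small $\delta>0$ and, for $n$ large, choose integers $N_{2}<\ldots<N_{m}$ with $L_{n}(N_{j})\in((\alpha_{j}+\beta_{j})/2 - \delta,(\alpha_{j}+\beta_{j})/2 + \delta)$; such $N_{j}$ exist because the preimages $(E_{n}(\alpha_{j}),E_{n}(\beta_{j}))$ are pairwise disjoint and their lengths tend to infinity. The joint event $\{J_{j}^{(n)}=N_{j}\}_{j=2}^{m}$ has strictly positive probability: arrange for each record process in rounds $1,\ldots,n-1$ to act as the identity on $\{1,\ldots,N_{m}\}$ (probability $(1/N_{m}!)^{n-1}$) and require $\nu^{(n)}(j)=N_{j}$ for $j=2,\ldots,m$ (positive probability as an ordered record configuration). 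Conditionally on this event it suffices to show $\Prob\{L_{n}(\tilde S_{N_{j}}^{*})\in[\alpha_{j},\beta_{j}]\,\forall j\}>0$. By the SLLN in Theorem~\ref{SLLN/CLT_S_{n}}, $\tilde S_{N_{j}}^{*}/N_{j}\to 1$ a.s.\ jointly in $j$, and a Mean Value Theorem expansion together with the bound $L_{n}'(N_{j})\,N_{j} = 1/\prod_{i=1}^{n-1}L_{i}(N_{j})\to 0$ (using the group property \eqref{eq:group_property} and $L_{i}(N_{j})\approx E_{n-i}(L_{n}(N_{j}))>1$) show that $|L_{n}(\tilde S_{N_{j}}^{*})-L_{n}(N_{j})|\to 0$ in probability, uniformly in $j$. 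Hence the desired conditional probability is in fact close to $1$ for $n$ large.

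The main obstacle I anticipate is the uniform atom estimate $\sup_{y}\Prob\{S_{M}^{*}=y\}\le C/\sqrt{M}$: here the pointwise CLT does not suffice, and one must upgrade to uniform convergence of distribution functions via P\'olya's theorem before comparing the mass of a unit interval at an arbitrary location with the maximum of the normal density. A secondary subtlety is the explicit verification that $\{J_{j}^{(n)}=N_{j}\,\forall j\}$ is realizable with positive probability, which is handled by the record configuration outlined above.
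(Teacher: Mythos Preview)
Your overall strategy is sound, and for the positive-mass assertion~\eqref{eq:all values attained} it essentially coincides with the paper's: both pick integers in the stretched intervals $(E_n(\alpha_j),E_n(\beta_j))$, realize the event that the iterated record process hits exactly those integers via an explicit configuration, and then show the remaining factor is close to~$1$ by a concentration argument (the paper uses the $L^1$-bound $\Erw|\cX_{t_i}^*-t_i|\le C''\sqrt{t_i}$ from~\eqref{eq:E_X_minus_k} with Markov's inequality, where you use the SLLN and the mean value theorem). For continuity your route is genuinely different. The paper never iterates to depth $n$: it takes an atom $a$ of $S_k^*$ of \emph{maximal} mass $p$, applies the fixed-point equation once to force some $S_l^*$ with $l>k$ to carry an atom of mass $\ge p$, and repeats to obtain indices $k<k_1<k_2<\ldots$ each with an atom of mass $\ge p$; since the standardized variables $(S_{k_i}^*-k_i)/\sqrt{k_i}$ converge weakly to $N(0,1)$, an elementary lemma (atoms of fixed mass cannot persist under weak convergence to a continuous law) yields the contradiction. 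Your $n$-fold iteration works too but is longer, requiring the separate boundary step $\Prob\{S_k^*=1\}=0$ to ensure $J_k^{(n)}\to\infty$.

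One point does need repair: the claimed bound $\sup_y\Prob\{S_M^*=y\}\le C/\sqrt{M}$ is not what P\'olya's theorem delivers. Writing $\epsilon_M:=\sup_x|\Prob\{(S_M^*-M)/\sqrt{M}\le x\}-\Phi(x)|$, your unit-interval comparison gives only $\sup_y\Prob\{S_M^*=y\}\le (2\pi M)^{-1/2}+2\epsilon_M$, and P\'olya guarantees merely $\epsilon_M\to 0$, not $\epsilon_M=O(M^{-1/2})$ (no Berry--Esseen rate is available for $S_M^*$). Fortunately the weaker statement $\sup_y\Prob\{S_M^*=y\}=:\epsilon_M'\to 0$ is all you need: given $\epsilon>0$, choose $M_0$ with $\epsilon_M'<\epsilon$ for $M>M_0$, split your sum at $M_0$, and use $\Prob\{J_k^{(n)}\le M_0\}\to 0$ to get $p_0\le\epsilon$. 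The detour through $\Erw[1/\sqrt{J_k^{(n)}}]$ can then be dropped.
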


Although the asymptotic properties of the point process $\sum_{i=1}^{\infty}\delta_{S_{i}^{*}}$ resemble those of the standard Poisson process, it seems that $\sum_{i=1}^{\infty}\delta_{S_{i}^{*}}$ does not belong to any known family of point processes.
It is natural to conjecture that the distribution of the random vector $({S}_{2}^{*}, \ldots, {S}_m^{*})$ is absolutely continuous.
We also conjecture that $1 = S_{1}^{*}< S_{2}^{*}<\ldots$ a.s.\ (with strict inequalities).

\begin{Rem}\label{rem:limit_poisson}\rm
There is a connection between the distributions of $S_{2}^{*}$ and $T^{*}(\rho)$, namely
\begin{equation}\label{eq:S_{2}_ast_T_rho_connection}
\Prob\{T^{*}(\rho) \le  k\}\ =\
\Prob\{S_{2}^{*}>L_{k}(\rho)\}, \quad k\in\Z,
\end{equation}
as shown in Step 1 in the Proof of Theorem~\ref{mainT} (Subsection~\ref{subsec:proof_mainT}). We also note in passing that the function $p_0^*(\rho):=\Prob\{T^{*}(\rho) = 0\}$ (displayed in red in Figure~\ref{fig:distr_T_rho}) satisfies the curious identity
$$ \sum_{n\in\Z} p_0^*(E_{n}(\rho)) = 1, \quad \rho>1, $$
for, by~\eqref{T_equation}, $p_0^*(E_{n}(\rho)) = \Prob\{T^{*}(E_{n}(\rho)) = 0\}= \Prob\{T^{*}(\rho)+n=0\}$.
\end{Rem}

A ``space-time version'' of Theorem~\ref{main1} is next. We have already observed that the distribution of $V:=({S}_{j}^{*})_{j\in\N}$ is invariant under the random mapping $\psi: \R^{\N} \to \R^{\N}$
\begin{equation}\label{psi_mapping_def}
(x_{1},x_{2},x_{3},\ldots)\ \mapsto\ (L_{1}(x_{\nu(1)}),L_{1}(x_{\nu(2)}), L_{1}(x_{\nu(3)}),\ldots).
\end{equation}
By iteration of this mapping, we can construct a one-sided stationary random sequence $(V_{k})_{k\in \N_0}$ such that each $V_{k} = ({S}_{k,j}^{*})_{j\in\N}$, $k\in\N_{0}$, is a $\R^{\N}$-valued random vector with the same distribution as $V$, and each $V_{k+1}$ is obtained from $V_{k}$ by applying an independent copy of $\psi$. Moreover, by the Kolmogorov consistency theorem, we can construct a two-sided stationary version $(V_{k})_{k\in \Z}$ such that $V_{k+1} = \psi^{(k)}(V_{k})$ for all $k\in\Z$, where $(\psi^{(k)})_{k\in\Z}$ is a sequence of independent copies of $\psi$.
 \begin{Theorem}\label{main1_time}
Stipulating that $L_{n+k}(S^{(n+k)}_{j}):=0$ for $n+k<0$, we have that
$$ ({L}_{n+k}(S^{(n+k)}_{j}))_{(j,k)\in\N\times\Z}\ \todistrfd\ ({S}^{*}_{k,j})_{(j,k)\in\N\times\Z} $$
as $n\to\infty$.
\end{Theorem}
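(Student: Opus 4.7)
The plan is to leverage the recursive structure that links the survivor processes at consecutive times together with the one-time marginal limit already supplied by Theorem~\ref{main1}. The starting point is a structural observation about the model: the players surviving round $n+1$ are exactly those among the round-$n$ survivors who attain a record value during round $n+1$. Since round $n+1$ is run independently of the past and its record indicators have the distribution of $(\xi_i)_{i\in\N}$ by R\'enyi's theorem, its record-time sequence $(\nu^{(n+1)}(j))_{j\in\N}$ is independent of $(S_j^{(n)})_{j\in\N}$ and equidistributed with $(\nu(j))_{j\in\N}$. This yields the pathwise identity
\begin{equation*}
S_j^{(n+1)} = S^{(n)}_{\nu^{(n+1)}(j)}, \qquad j\in\N,
\end{equation*}
from which, using $L_{n+1}=L_1\circ L_n$ and the definition \eqref{psi_mapping_def} of $\psi$, one obtains
\begin{equation*}
\bigl(L_{n+1}(S^{(n+1)}_j)\bigr)_{j\in\N} = \psi^{(n+1)}\bigl((L_n(S^{(n)}_j))_{j\in\N}\bigr),
\end{equation*}
where $(\psi^{(n)})_{n\in\N}$ is a sequence of iid copies of the random map $\psi$, each $\psi^{(n)}$ being independent of everything generated by the first $n-1$ rounds.

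To prove the claimed f.d.d.\ convergence, fix a finite $K=\{k_1<\ldots<k_m\}\subset\Z$ and a finite $J\subset\N$, and take $n$ so large that $n+k_1\ge 0$. Setting $n_0:=n+k_1$, iteration of the identity above gives
\begin{equation*}
\bigl(L_{n_0+l}(S^{(n_0+l)}_j)\bigr)_{j\in\N} = \psi^{(n_0+l)}\circ\cdots\circ\psi^{(n_0+1)}\bigl((L_{n_0}(S^{(n_0)}_j))_{j\in\N}\bigr),\quad 0\le l\le k_m-k_1.
\end{equation*}
Write $U_n:=(L_{n_0}(S_j^{(n_0)}))_{j\in\N}$. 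By Theorem~\ref{main1}, $U_n\todistrfd V:=(S^*_j)_{j\in\N}$ as $n\to\infty$, while $\psi^{(n_0+1)},\ldots,\psi^{(n_0+k_m-k_1)}$ are iid copies of $\psi$ that are independent of $U_n$. Since independence is preserved under taking marginal weak limits, the joint distribution of the tuple $(U_n,\psi^{(n_0+1)},\ldots,\psi^{(n_0+k_m-k_1)})$ converges in the f.d.d.\ sense to the joint distribution of $(V,\wh\psi^{(1)},\ldots,\wh\psi^{(k_m-k_1)})$, where the $\wh\psi^{(l)}$ are iid copies of $\psi$ independent of $V$. By the construction preceding the theorem, this is precisely the joint law of $(V_{k_1},\psi^{(k_1)},\ldots,\psi^{(k_m-1)})$ in the two-sided stationary extension, so the family of iterates converges in f.d.d.\ to $(S^*_{k_i,j})_{j\in J,\,1\le i\le m}$. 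The indices $k\in K$ with $n+k<0$ fall outside the admissible range only for finitely many $n$, so the convention $L_{n+k}(S_j^{(n+k)}):=0$ plays no role in the limit.

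The principal technical point is to justify that composition with the random maps $\psi^{(n_0+i)}$ is compatible with weak convergence. The difficulty is that $\psi$ evaluates its argument at the random \emph{integer}-valued indices $\nu(j)$, so that the iterated map is not continuous on $\R^\N$ with the product topology and the continuous mapping theorem does not apply directly. This obstacle is mild and resolved by conditioning: for fixed $l\ge 1$ and $j\in J$, the unrolled formula
\begin{equation*}
\psi^{(l)}\circ\cdots\circ\psi^{(1)}(x)_j = L_l\bigl(x_{\nu^{(1)}(\nu^{(2)}(\cdots\nu^{(l)}(j)))}\bigr)
\end{equation*}
shows that the value depends on only finitely many of the integer-valued quantities $\nu^{(i)}(\cdot)$ and, once these are fixed, on a single coordinate of $x$ through the continuous function $L_l$. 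Partitioning on the (countably many) possible values of the relevant $\nu^{(n_0+i)}(\cdot)$, using independence of the record-time data from $U_n$, and invoking the f.d.d.\ convergence $U_n\todistrfd V$ on each atom therefore yields the desired joint weak convergence. Since $K$ and $J$ were arbitrary, this establishes the finite-dimensional convergence claimed in the theorem.
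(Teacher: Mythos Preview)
Your proof is correct and follows essentially the same route as the paper: reduce to a finite time window $\{-p,\ldots,p\}$, express all $L_{n+k}(S^{(n+k)})$ as iterates of independent copies of $\psi$ applied to the base vector $L_{n-p}(S^{(n-p)})$, invoke Theorem~\ref{main1} for the base vector, and push the convergence through the iterates. One point deserves correction, however: your assertion that ``the iterated map is not continuous on $\R^\N$ with the product topology'' is mistaken. For any fixed realization of the record times, the $j$-th output coordinate of $\psi$ is $L_{1}(x_{\nu(j)})$, which depends on a single input coordinate through the continuous function $L_{1}$; hence $\psi$ (and any finite composition of copies of it) is almost surely continuous on $\R^\N$. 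The paper notes this explicitly at the end of the proof of Theorem~\ref{main1} and then, in the proof of Theorem~\ref{main1_time}, applies the continuous mapping theorem directly (for a random but a.s.\ continuous map, independent of the converging input, this is justified e.g.\ via Skorohod coupling). Your conditioning argument is a valid alternative and leads to the same conclusion, but it is not needed to circumvent any discontinuity.
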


\subsection{The number of survivors after $n$ rounds}
Recall that $N^{(n)}_{M}$ is the number of persons among $\{1,\ldots,M\}$ who survived the first $n$ rounds. Let ${N}^{*}(\rho)$ denote the counting process associated with the sequence $(S^{*}_{k})_{k\in\N}$ appearing in Theorem \ref{main1}, that is
\begin{equation}\label{N_ast_def}
{N}^{*}(\rho) := \#\{k\in\N\colon {S}_{k}^{*}\le \rho\},\quad \rho\ge 1.
\end{equation}
The next theorem is a ``dual'' version of Theorem~\ref{main1}. 

\begin{Theorem}\label{main2}
The process $({N}^{*}(\rho))_{\rho\ge 1}$ is continuous in probability, and
\begin{equation}\label{eq:N_{n}_conv_fdd}
\left(N^{(n)}_{[{E}_{n}(\rho)]}\right)_{\rho\ge 1}\ \todistrfd\ ({N}^{*}(\rho))_{\rho\ge 1}\quad\text{as }n\to\infty.
\end{equation}
If $(K(M))_{M\in\N}$ is a record counting sequence $($see~\eqref{k_definition}$)$ and independent of $({N}^{*}(\rho))_{\rho\ge 1}$, then the following stochastic fixed-point equation holds:
\begin{equation}\label{eq:fixed_point_N_star}
(K({N}^{*}(\rho)))_{\rho\ge 1}\ \eqdist\ ({N}^{*}(1+\log \rho))_{\rho\ge 1}.
\end{equation}
\end{Theorem}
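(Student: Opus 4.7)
The plan is to derive all three assertions from Theorem \ref{main1} combined with Proposition \ref{prop:no_atoms}, exploiting the duality
\[ \{N_{M}^{(n)} \le k\}\ =\ \{S_{k+1}^{(n)} > M\} \qquad (M,k,n\in\N), \]
which in the limit reads $\{{N}^{*}(\rho)\le k\}=\{{S}_{k+1}^{*}>\rho\}$. For continuity in probability of $({N}^{*}(\rho))_{\rho\ge 1}$, note that $N^{*}$ can have a jump in probability at $\rho$ only if $S_{k}^{*}$ puts an atom at $\rho$ for some $k$: for $\rho>1$ this is excluded by Proposition \ref{prop:no_atoms}, while at $\rho=1$ only right-continuity is meaningful and it holds trivially since $S_{1}^{*}=1$.

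For the fdd convergence \eqref{eq:N_{n}_conv_fdd}, I would convert, via the duality, the event $\{N^{(n)}_{[E_{n}(\rho_{i})]}\le k_{i},\ i=1,\ldots,m\}$ into $\{L_{n}(S_{k_{i}+1}^{(n)})>L_{n}([E_{n}(\rho_{i})]),\ i=1,\ldots,m\}$, using that $L_{n}$ is strictly increasing. Now $L_{n}(E_{n}(\rho_{i}))=\rho_{i}$, and a short induction shows $L_{n}'(x) = \prod_{j=0}^{n-1} L_{j}(x)^{-1}$, which at $x=E_{n}(\rho_{i})$ equals $\prod_{k=1}^{n} E_{k}(\rho_{i})^{-1}$ by \eqref{eq:group_property}; this vanishes tremendously fast, so $L_{n}([E_{n}(\rho_{i})])\to\rho_{i}$. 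Combining this deterministic approximation with the joint convergence $(L_{n}(S_{j}^{(n)}))_{j\in\N}\todistrfd(S_{j}^{*})_{j\in\N}$ from Theorem \ref{main1} and the absence of atoms of $S_{j}^{*}$ at points $\rho>1$ (Proposition \ref{prop:no_atoms}), the portmanteau theorem yields the asserted fdd convergence.

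For the stochastic fixed-point identity \eqref{eq:fixed_point_N_star}, I would write $K(M)=\#\{j\in\N\colon \nu(j)\le M\}$ and use $\{\nu(j)\le N^{*}(\rho)\}=\{S_{\nu(j)}^{*}\le\rho\}$ to obtain
$$ K({N}^{*}(\rho))\ =\ \#\{j\in\N\colon S_{\nu(j)}^{*}\le\rho\}\ =\ \#\{j\in\N\colon L_{1}(S_{\nu(j)}^{*})\le L_{1}(\rho)\}. $$
The fixed-point identity \eqref{fp_limit_S} in Theorem \ref{main1} gives $(L_{1}(S_{\nu(j)}^{*}))_{j\in\N}\eqdist(S_{j}^{*})_{j\in\N}$, so the right-hand side is distributionally equal to $\#\{j\in\N\colon S_{j}^{*}\le L_{1}(\rho)\}=N^{*}(1+\log\rho)$. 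Applying the same argument jointly at finitely many $\rho$'s, while noting that both sides are nondecreasing in $\rho$ and the mapping $\rho\mapsto L_{1}(\rho)$ is deterministic and continuous, promotes this to equality of finite-dimensional distributions, hence of the processes.

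I expect the main technical obstacle to lie in the fdd step: one must control $L_{n}([E_{n}(\rho_{i})])-\rho_{i}$ simultaneously at finitely many $\rho_{i}$'s and at the same time rule out atoms of the limit at those values, and only then is the joint convergence supplied by \eqref{basic_convergence} strong enough to conclude via a portmanteau-type argument. Both ingredients are available from the results cited, and once they are in place the remainder of the proof is routine.
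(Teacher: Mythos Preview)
Your proposal is correct and follows essentially the same route as the paper: duality between $N$ and $S$, Theorem~\ref{main1} for the joint convergence of $L_n(S_j^{(n)})$, Proposition~\ref{prop:no_atoms} to handle the boundary, and the fixed-point identity~\eqref{fp_limit_S} for~\eqref{eq:fixed_point_N_star}. One simplification you overlooked: since $S_{k+1}^{(n)}$ is integer-valued, the event $\{S_{k+1}^{(n)} > [E_n(\rho_i)]\}$ equals $\{S_{k+1}^{(n)} > E_n(\rho_i)\}$ exactly, so after applying the monotone $L_n$ you land on $\{L_n(S_{k+1}^{(n)}) > \rho_i\}$ with no error term at all---the derivative estimate for $L_n([E_n(\rho_i)])-\rho_i$ is unnecessary (the paper uses the dual form $\{S_{k_i}^{(n)} \le [E_n(\rho_i)]\} = \{S_{k_i}^{(n)} \le E_n(\rho_i)\}$ in the same way).
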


\begin{Rem}\rm
In contrast to Theorem \ref{mainT} the parameter $\rho$ in Theorem \ref{main2} varies in $[1,\infty)$ rather than $(1,\infty)$. If $\rho=1$ then both sides of \eqref{eq:N_{n}_conv_fdd} equal $1$.
\end{Rem}

\begin{Rem}\label{rem:main2}\rm
The processes in \eqref{eq:N_{n}_conv_fdd} have all nondecreasing, c\`adl\`ag sample paths, and the limiting process $({N}^{*}(\rho))_{\rho\ge 1}$ is continuous in probability. 
Hence, by the same argument as in Remark~\ref{rem1:mainT}, \eqref{eq:N_{n}_conv_fdd} even holds in the sense of weak convergence on $D[1,\infty)$ endowed with the $M_{1}$-topology.
\end{Rem}

Recall from Theorem \ref{main1_time} above that $({S}_{k,j}^{*})_{(j,k)\in\N \times \Z}$ equals the ``space-time'' limit of the random field $(L_{n+k}(S_{j}^{(n+k)}))_{(j,k)\in\N\times\Z}$ as $n\to\infty$. For $\rho\ge 1$ define the counting processes
$$
{N}_{k}^{*}(\rho):=\#\{j\in\N:{S}_{k,j}^{*}\le L_{k}(\rho)\},\quad k\in\Z.
$$
The next result is the ``space-time'' version of Theorem \ref{main2}. 

\begin{Theorem}\label{main2_time}
As $n\to\infty$,
$$
\left(N^{(n+k)}_{[{E}_{n}(\rho)]}\right)_{\rho\ge 1,k\in\Z}\ \todistrfd\ ({N}_{k}^{*}(\rho))_{\rho\ge 1,k\in\Z}.
$$
\end{Theorem}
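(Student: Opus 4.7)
\bigskip\noindent
\textbf{Proof proposal.}
The strategy is to derive Theorem~\ref{main2_time} from the space--time position convergence in Theorem~\ref{main1_time}, in exactly the same spirit in which Theorem~\ref{main2} descends from Theorem~\ref{main1}. For each $k\in\Z$ and each $n$ with $n+k\ge 0$, the function $L_{n+k}$ is strictly increasing on $[1,\infty)$, so
\[
N^{(n+k)}_{[E_n(\rho)]}\ =\ \#\bigl\{j\in\N\colon L_{n+k}(S^{(n+k)}_{j})\le L_{n+k}([E_n(\rho)])\bigr\}.
\]
The limit has the analogous representation $N^{*}_{k}(\rho)=\#\{j\in\N\colon S^{*}_{k,j}\le L_{k}(\rho)\}$. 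Thus the claim reduces to two ingredients: (a) a deterministic scaling lemma, $L_{n+k}([E_n(\rho)])\to L_{k}(\rho)$ as $n\to\infty$ for every $(\rho,k)$; and (b) the promotion of the space--time fdd convergence of positions supplied by Theorem~\ref{main1_time} to fdd convergence of the associated counting processes at the target thresholds.

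For (a), the group identity \eqref{eq:group_property}, extended to $\Z$ via $E_{-n}=L_n$, gives $L_{n+k}(E_n(\rho))=L_{k}(\rho)$ for all $n+k\ge 0$. Replacing $E_n(\rho)$ by $[E_n(\rho)]$ introduces a unit-size discrepancy that is wiped out by the very first logarithm, since $L_1([E_n(\rho)])-L_1(E_n(\rho))=O(1/E_n(\rho))$; for $k\ge 0$ one writes $L_{n+k}=L_k\circ L_n$ and concludes by continuity of the fixed function $L_k$, while for $k<0$ the map $L_{n+k}$ is itself a fixed function (once $n\ge|k|$) continuous at the diverging point $E_n(\rho)$. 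For (b), Theorem~\ref{main1_time} yields
\[
\bigl(L_{n+k}(S^{(n+k)}_{j})\bigr)_{(j,k)\in\N\times\Z}\ \todistrfd\ (S^{*}_{k,j})_{(j,k)\in\N\times\Z}.
\]
By the Skorokhod representation theorem, I would realize this convergence almost surely on a common probability space. Then at any finite collection of thresholds $L_{k_i}(\rho_i)$, the counting functional is continuous at the a.s.\ limiting configuration provided (i) no $S^{*}_{k_i,j}$ sits exactly on the threshold, and (ii) $S^{*}_{k_i,j}\to\infty$ as $j\to\infty$. Condition (ii) follows from Theorem~\ref{SLLN/CLT_S_{n}} applied to the stationary copy $(S^{*}_{k,j})_{j\in\N}$, which has the same marginal law as $(S^{*}_j)_{j\in\N}$. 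Condition (i) follows from Proposition~\ref{prop:no_atoms}: marginal laws of $S^{*}_{k,j}$ for $j\ge 2$ are continuous, so $\Prob\{S^{*}_{k,j}=L_k(\rho)\}=0$ whenever $L_k(\rho)>1$; the degenerate case $L_{k_i}(\rho_i)=1$ forces $\rho_i=1$, and then both $N^{(n+k_i)}_{[E_n(1)]}$ and $N^{*}_{k_i}(1)$ trivially equal $1$.

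The main obstacle is not any single step but gluing (a) and (b) together cleanly, since the counting functional is discontinuous and the threshold itself drifts with $n$. A standard $\eps$-sandwich argument, comparing counts at the fixed thresholds $L_k(\rho)\pm\eps$ and then letting $\eps\downarrow 0$, handles the interaction between the drift and the no-atom property. The joint structure across $(\rho,k)$ poses no further difficulty because Theorem~\ref{main1_time} already delivers joint fdd convergence of the positions, so only joint continuity-in-probability of $(N^{*}_{k}(\rho))_{(\rho,k)}$ at the chosen arguments is needed, which once again reduces to the per-coordinate no-atom property.
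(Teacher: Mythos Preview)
Your approach is essentially the paper's: reduce to the positions via duality, invoke Theorem~\ref{main1_time}, and use continuity of the marginals of $S^{*}_{k,j}$ from Proposition~\ref{prop:no_atoms}. Two points of comparison are worth noting.

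First, the paper avoids your entire drift-of-thresholds discussion by exploiting that $S^{(n+k)}_{j}$ is integer-valued: hence $\{S^{(n+k)}_{j}\le [E_n(\rho)]\}=\{S^{(n+k)}_{j}\le E_n(\rho)\}$ exactly, and applying the monotone map $L_{n+k}$ together with the group identity \eqref{eq:group_property} gives the \emph{exact} threshold $L_{k}(\rho)$ with no error term. This eliminates your item (a), the $\eps$-sandwich, and the Skorokhod coupling in one stroke; the paper then simply computes
\[
\Prob\{N^{(n+k_i)}_{[E_n(\rho)]}\ge l_i,\ i\}\ =\ \Prob\{L_{n+k_i}(S^{(n+k_i)}_{l_i})\le L_{k_i}(\rho),\ i\}
\]
and passes to the limit using Theorem~\ref{main1_time} and the no-atom property.

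Second, your treatment of (a) for $k<0$ contains a slip: $L_{n+k}$ is \emph{not} a fixed function once $n\ge|k|$; it still depends on $n$. The claim $L_{n+k}([E_n(\rho)])\to L_k(\rho)$ is nevertheless true and follows, e.g., from the derivative bound $L_{n+k}'(x)\le 1/x$ for $n+k\ge 2$ applied on an interval of length at most $1$ containing $E_n(\rho)$. But as noted above, with the integer-part observation this entire issue disappears.
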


\subsection{The number of conclusive rounds}

We finally turn to the random variable that will appear in connection with the Poisson-Dirichlet coalescent, namely the number of conclusive rounds $T_{0}(M)$ it takes to eliminate all but the first person among $1,2,\ldots,M$.
Recall from \eqref{T0_definition} that
$$ T_{0}(M)\ :=\ \sum_{j=0}^{T(M)-1} \1\{N_{M}^{(j)}\neq N_{M}^{(j+1)}\}. $$
If the number of players, say $k$, in a round is large, the event that the round is inconclusive has very small probability, namely $1/k!$, however at times close to $T(M)$ inconclusive rounds may appear with non-negligible probability.
Theorem \ref{mainT_{0}} below constitutes a counterpart of Theorem \ref{mainT} for $T_{0}(n)$ and forms a key ingredient to the proof of Theorem \ref{PD-theorem} on the Poisson-Dirichlet coalescent stated below. Before stating it, we give an auxiliary proposition describing the behavior of the model shortly before the time $T([E_{n}(\rho)])$ when all but the first player have left. Fix $m\in\N$ and let
$$ \mathcal{Z}_{m,n}(\rho)\ :=\ \Big(N^{(T([E_{n}(\rho)])-1)}_{[E_{n}(\rho)]},N^{(T([E_{n}(\rho)])-2)}_{[E_{n}(\rho)]},\ldots,N^{(T([E_{n}(\rho)])-m)}_{[E_{n}(\rho)]}\Big).
$$
Note that the first coordinate is the number of players among $1,2,\ldots,[E_{n}(\rho)]$ who participate in the last round, the second coordinate is the
number of players among $1,2,\ldots,[E_{n}(\rho)]$, who participate in the penultimate round, and so on.

\begin{Prop}\label{Proposition_last_rounds}
For every $m\in\N$,
$$ \Big(T([E_{n}(\rho)])-n,\mathcal{Z}_{m,n}(\rho)\Big)_{\rho>1}\ \todistrfd\ \Big(T^{*}(\rho),\mathcal{Z}^{*}_{m}(\rho)\Big)_{\rho>1}, $$
where $T^{*}(\rho)$ is as in Theorem~\ref{mainT} and
$$ \mathcal{Z}^{*}_{m}(\rho)\ :=\ \Big(N^{*}_{T^{*}(\rho)-1}(\rho),\ldots, N_{T^{*}(\rho)-m}^{*} (\rho)\Big). $$
\end{Prop}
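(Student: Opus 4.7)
The plan is to deduce the proposition from the f.d.d.\ convergence of the random field $(N^{(n+k)}_{[E_{n}(\rho)]})_{\rho>1,\,k\in\Z}$ to $(N^{*}_{k}(\rho))_{\rho>1,\,k\in\Z}$ given by Theorem~\ref{main2_time}, combined with the tightness of $T([E_{n}(\rho)])-n$ provided by Theorem~\ref{mainT}. The pivotal observation is that for each $M\in\N$ the integer-valued sequence $(N^{(j)}_{M})_{j\in\N}$ is non-increasing and absorbed at $1$ at the random time $T(M)$, so that for all sufficiently large $n$
\[
\{T([E_{n}(\rho)])-n=t\}\ =\ \{N^{(n+t)}_{[E_{n}(\rho)]}=1,\ N^{(n+t-1)}_{[E_{n}(\rho)]}>1\}.
\]
Hence, once $T([E_{n}(\rho)])-n$ is pinned down to lie in a prescribed finite window of integers, both $T([E_{n}(\rho)])-n$ and each coordinate of $\mathcal{Z}_{m,n}(\rho)$ become deterministic functions of the restriction of the field to that window.

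I would then carry out the argument as follows. Fix $\rho_{1},\ldots,\rho_{p}>1$ and $\varepsilon>0$. By Theorem~\ref{mainT} the family $(T([E_{n}(\rho_{i})])-n)_{n,i}$ is tight, so there exists an integer $L>0$ such that, with $K:=\{-L,\ldots,L\}$, one has $\Prob\{T([E_{n}(\rho_{i})])-n\in K\text{ for all }i\}\ge 1-\varepsilon$ for all large $n$, as well as $\Prob\{T^{*}(\rho_{i})\in K\text{ for all }i\}\ge 1-\varepsilon$. Set $K':=\{-L-m,\ldots,L\}$ and introduce the map $\Phi:(\N^{K'})^{p}\to(\Z\times\N^{m})^{p}$ sending $((n_{i,k})_{k\in K'})_{i=1}^{p}$ to $((\tau_{i},n_{i,\tau_{i}-1},\ldots,n_{i,\tau_{i}-m}))_{i=1}^{p}$, where $\tau_{i}:=\inf\{k\in K:n_{i,k}=1\}$ (with an arbitrary default when the set is empty). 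Since each coordinate is integer-valued, $\Phi$ is automatically continuous, and the continuous mapping theorem applied to Theorem~\ref{main2_time} yields
\[
\Phi\bigl((N^{(n+k)}_{[E_{n}(\rho_{i})]})_{k\in K'}\bigr)_{i=1}^{p}\ \todistr\ \Phi\bigl((N_{k}^{*}(\rho_{i}))_{k\in K'}\bigr)_{i=1}^{p}.
\]
On the two $1-\varepsilon$ events above, the left side coincides with $(T([E_{n}(\rho_{i})])-n,\mathcal{Z}_{m,n}(\rho_{i}))_{i=1}^{p}$ and the right side with $(\tilde T^{*}(\rho_{i}),(N_{\tilde T^{*}(\rho_{i})-j}^{*}(\rho_{i}))_{j=1}^{m})_{i=1}^{p}$, where $\tilde T^{*}(\rho):=\inf\{k\in\Z:N_{k}^{*}(\rho)=1\}$. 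Letting $\varepsilon\downarrow 0$ delivers f.d.d.\ convergence to this limit.

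It remains to identify the limit with the one stated in the proposition. The marginal of $\tilde T^{*}(\rho)$ is the weak limit of $T([E_{n}(\rho)])-n$, which by Theorem~\ref{mainT} agrees with the law of $T^{*}(\rho)$; the same uniqueness-of-weak-limit reasoning, applied jointly, gives $(\tilde T^{*}(\rho))_{\rho>1}\ofdd(T^{*}(\rho))_{\rho>1}$. The coupling of $T^{*}$ with $(N_{k}^{*}(\rho))$ built into the definition of $\mathcal{Z}^{*}_{m}(\rho)$ is precisely the relation $T^{*}(\rho)=\inf\{k:N_{k}^{*}(\rho)=1\}$, which is what $\tilde T^{*}$ encodes, so $(\tilde T^{*}(\rho),(N_{\tilde T^{*}(\rho)-j}^{*}(\rho))_{j=1}^{m})\ofdd(T^{*}(\rho),\mathcal{Z}^{*}_{m}(\rho))$.

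The step I expect to require the most attention is the pathwise identification on the high-probability event, namely verifying that $\Phi((N^{(n+k)}_{[E_{n}(\rho_{i})]})_{k\in K'})=(T([E_{n}(\rho_{i})])-n,\mathcal{Z}_{m,n}(\rho_{i}))$ whenever $T([E_{n}(\rho_{i})])-n\in K$. This is a bookkeeping consequence of the monotonicity of $j\mapsto N^{(j)}_{M}$ and its absorption at $1$ rather than a genuine difficulty; once established, the rest is a routine application of the continuous mapping theorem together with the tightness furnished by Theorem~\ref{mainT}.
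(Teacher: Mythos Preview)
Your argument is correct and rests on the same two ingredients as the paper's: the f.d.d.\ convergence of the field $(N^{(n+k)}_{[E_{n}(\rho)]})_{k\in\Z}$ from Theorem~\ref{main2_time}, and the identification $T^{*}(\rho)=\inf\{k\in\Z:N_{k}^{*}(\rho)=1\}$. The paper, however, bypasses the tightness/truncation layer entirely by exploiting that all variables are integer-valued: it fixes $k\in\Z$ and $2\le l_{1}\le\ldots\le l_{m}$, notes the pathwise identity
\[
\{T([E_{n}(\rho)])-n=k,\ N_{[E_{n}(\rho)]}^{(T([E_{n}(\rho)])-i)}=l_{i}\text{ for all }i\}=\{N_{[E_{n}(\rho)]}^{(n+k)}=1,\ N_{[E_{n}(\rho)]}^{(n+k-i)}=l_{i}\text{ for all }i\}
\]
(the constraint $l_{1}\ge 2$ forces $T=n+k$ once $N^{(n+k)}=1$), and applies Theorem~\ref{main2_time} to each such point mass; the companion identity on the limit side is exactly $T^{*}(\rho)=\inf\{k:N_{k}^{*}(\rho)=1\}$. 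No window $K$, no map $\Phi$, no $\varepsilon$ are needed. Your final identification of $\tilde T^{*}$ with $T^{*}$ via uniqueness of weak limits is also more work than necessary: in the paper $T^{*}(\rho)$ is \emph{constructed} in~\eqref{T_explicit} as $\inf\{k:S_{k,2}^{*}>L_{k}(\rho)\}=\inf\{k:N_{k}^{*}(\rho)=1\}$ on the same probability space as $(N_{k}^{*}(\rho))_{k}$, so $\tilde T^{*}$ and $T^{*}$ are literally the same random variable, not merely equal in law.
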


The next theorem is the basic convergence result on the number of conclusive rounds.
\begin{Theorem}\label{mainT_{0}}
As $n\to\infty$, the following convergence of the finite-dimensional distributions holds:
$$ (T_{0}([E_{n}(\rho)])-n)_{\rho>1}\ \todistrfd\ (T_{0}^{*}(\rho))_{\rho>1}, $$
where
$$
T_{0}^{*}(\rho)\
:=\
T^{*}(\rho)-\sum_{j=-\infty}^{T^{*}(\rho)-1}\1\left\{N^{*}_{j}(\rho)=N^{*}_{j+1}(\rho)\right\}.
$$
In particular, for each $\rho>1$,
$$
T_{0}([E_{n}(\rho)])-n\ \todistr\ T_{0}^{*}(\rho)\quad\text{as }n\to\infty.
$$
\end{Theorem}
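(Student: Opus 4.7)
The plan is to write $T_0(M)=T(M)-I(M)$ with $I(M):=\sum_{j=0}^{T(M)-1}\1\{N^{(j)}_M=N^{(j+1)}_M\}$ counting the number of inconclusive rounds, and to establish the joint finite-dimensional convergence
\begin{equation*}
\bigl(T([E_n(\rho)])-n,\,I([E_n(\rho)])\bigr)_{\rho>1}\ \todistrfd\ \bigl(T^*(\rho),I^*(\rho)\bigr)_{\rho>1}
\end{equation*}
with $I^*(\rho):=\sum_{j=-\infty}^{T^*(\rho)-1}\1\{N^*_j(\rho)=N^*_{j+1}(\rho)\}$. The claim then follows by applying the continuous mapping theorem to $(x,y)\mapsto x-y$, identifying $T^*_0(\rho)=T^*(\rho)-I^*(\rho)$ exactly as in the statement.

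The approach is a truncation. Fix $\rho>1$, abbreviate $T=T([E_n(\rho)])$ and $M=[E_n(\rho)]$, and for each $m\in\N$ decompose $I(M)=I_n^{(m)}(\rho)+R_n^{(m)}(\rho)$ where
\begin{equation*}
I_n^{(m)}(\rho)\,:=\,\sum_{j=T-m}^{T-2}\1\{N^{(j)}_M=N^{(j+1)}_M\}
\end{equation*}
collects the last $m$ contributions (the $j=T-1$ term is automatically zero by definition of $T$), and $R_n^{(m)}(\rho)$ the remaining ones. Because $I_n^{(m)}(\rho)$ is a measurable function of $\mathcal{Z}_{m,n}(\rho)$, the continuous mapping theorem applied to Proposition~\ref{Proposition_last_rounds} yields
\begin{equation*}
\bigl(T([E_n(\rho)])-n,\,I_n^{(m)}(\rho)\bigr)_{\rho>1}\ \todistrfd\ \bigl(T^*(\rho),\,I^*_m(\rho)\bigr)_{\rho>1}\qquad(n\to\infty),
\end{equation*}
where $I^*_m(\rho):=\sum_{j=T^*(\rho)-m}^{T^*(\rho)-2}\1\{N^*_j(\rho)=N^*_{j+1}(\rho)\}\uparrow I^*(\rho)$ pointwise as $m\to\infty$.

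The decisive step is to show the uniform-in-$n$ negligibility
\begin{equation*}
\lim_{m\to\infty}\,\limsup_{n\to\infty}\Prob\{R_n^{(m)}(\rho)\geq 1\}\ =\ 0.
\end{equation*}
The key observation is that $(N^{(j)}_M)_{j\geq 0}$ is a Markov chain which, while at state $k\geq 2$, stays put with probability $\prod_{i=1}^{k}(1/i)=1/k!$; consequently, the number of inconclusive rounds spent at state $k$ is stochastically dominated by a geometric random variable with parameter $1-1/k!$, whose expectation is at most $1/(k!-1)$ \emph{uniformly in $M$}. On the event $\{N_M^{(T-m)}>A\}$, every round counted by $R_n^{(m)}(\rho)$ occurs at a state exceeding $A$, so Markov's inequality gives
\begin{equation*}
\Prob\{R_n^{(m)}(\rho)\geq 1\}\ \leq\ \Prob\{N^{(T-m)}_M\leq A\}\,+\,\sum_{k>A}\frac{1}{k!-1}.
\end{equation*}
For prescribed $\eps>0$ I choose $A$ so that the tail sum is at most $\eps/2$; for this fixed $A$, Proposition~\ref{Proposition_last_rounds} yields $\limsup_n\Prob\{N^{(T-m)}_M\leq A\}\leq\Prob\{N^*_{T^*(\rho)-m}(\rho)\leq A\}$, and dominated convergence applied to the a.s.\ finite index $T^*(\rho)$ reduces the matter to showing $N^*_k(\rho)\to\infty$ in probability as $k\to-\infty$. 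The latter holds since $L_k(\rho)=E_{-k}(\rho)\uparrow\infty$ while $S^*_A<\infty$ a.s.\ by Theorem~\ref{SLLN/CLT_S_{n}}, so that $\Prob\{N^*_k(\rho)\geq A\}=\Prob\{S^*_{k,A}\leq L_k(\rho)\}\to 1$ for every fixed $A$.

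A standard diagonal argument then combines the two preceding displays to give $(T([E_n(\rho)])-n,I([E_n(\rho)]))\todistr(T^*(\rho),I^*(\rho))$, and the extension to finitely many values of $\rho$ is immediate because the truncation bound applies to each coordinate separately. The main obstacle is precisely the negligibility step: it hinges on the summable Markov-chain estimate $\Erw(\tau_k-1)_+\leq 1/(k!-1)$ on inconclusive occupancy times, combined with the growth $N^*_k(\rho)\to\infty$ as $k\to-\infty$, which together express the fact that inconclusive rounds concentrate in a tight $O_\Prob(1)$-window at the very end of the procedure.
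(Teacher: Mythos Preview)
Your proof is correct and follows the same overall architecture as the paper: write $T_0=T-I$, truncate the inconclusive count to the last $m$ rounds, handle the truncated piece by Proposition~\ref{Proposition_last_rounds}, establish uniform negligibility of the remainder, and conclude via the standard approximation device (Billingsley, Theorem~3.2).

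The only substantive difference lies in how the negligibility $\lim_m\limsup_n\Prob\{R_n^{(m)}(\rho)\ge 1\}=0$ is argued. The paper bounds $\Erw V_{0,m}$ directly: summing over all rounds $j$ and states $k$, it arrives at $2\sum_{k\ge 2}\frac{1}{k!}\Prob\{K^{(1\uarr m-1)}(k)>1\}$ and then lets $m\to\infty$ under the sum by dominated convergence. You instead introduce a threshold $A$, split according to $\{N^{(T-m)}_M\le A\}$, bound the tail by $\sum_{k>A}1/(k!-1)$ via the geometric sojourn, and control the first piece by passing to the limit process and invoking $N^*_{T^*(\rho)-m}(\rho)\to\infty$ as $m\to\infty$. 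Both arguments work; the paper's is a touch shorter because it stays entirely in the prelimit, while yours makes the probabilistic picture (inconclusive rounds live in a tight window near the end) more explicit. One small point you should state explicitly: $I^*(\rho)<\infty$ a.s., i.e.\ $T_0^*(\rho)>-\infty$. The paper proves this by a separate expectation computation; in your setup it follows at once from the ingredients you already have, namely $N^*_k(\rho)\uparrow\infty$ as $k\to-\infty$, the relation $N^*_{j+1}(\rho)=K(N^*_j(\rho))$, and the same tail bound $\sum_{k>A}1/(k!-1)$.
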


Some properties of the stochastic process $(T_0^{*}(\rho))_{\rho>1}$ are collected in the next proposition.

\begin{Prop}\label{prop:T_0_ast_properties}
The random variables $T_{0}^{*}(\rho)$, $\rho>1$, are integer-valued and nondegenerate. For $1<\rho_{1}<\rho_{2}$ we have $T_0^{*}(\rho_{1}) \leq T_0^{*}(\rho_{2})$ and the inequality is strict on an event of positive probability. In particular, the distributions of $T^{*}_0(\rho)$, $\rho>1$, are pairwise distinct.
\end{Prop}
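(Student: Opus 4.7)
The plan is to treat the four claims of the proposition one by one, always working in the canonical joint coupling of $(T_0^{*}(\rho))_{\rho>1}$ supplied by the space-time stationary field $(S_{k,j}^{*})_{k\in\Z,\,j\in\N}$ of Theorem~\ref{main1_time} and the representation $T_0^{*}(\rho)=T^{*}(\rho)-I(\rho)$ with $I(\rho):=\sum_{j=-\infty}^{T^{*}(\rho)-1}\1\{N_j^{*}(\rho)=N_{j+1}^{*}(\rho)\}$.

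\emph{Integer-valuedness.} Since $T^{*}(\rho)\in\Z$ by Theorem~\ref{mainT}, it suffices to show $I(\rho)<\infty$ a.s. Under the coupling, $N_{k+1}^{*}(\rho)=K^{(k)}(N_k^{*}(\rho))$, where $(K^{(k)})_{k\in\Z}$ are iid copies of the record-count function independent of the past, so conditionally on $(N_j^{*}(\rho))_{j\in\Z}$ round $k$ is inconclusive with probability $1/N_k^{*}(\rho)!$. By stationarity $N_k^{*}(\rho)\eqdist N^{*}(L_k(\rho))$ with $L_k(\rho)=E_{-k}(\rho)\to\infty$ as $k\to-\infty$; combined with the deterministic monotonicity $N_{k+1}^{*}(\rho)\le N_k^{*}(\rho)$ (since $K(m)\le m$), this yields $N_k^{*}(\rho)\uparrow\infty$ almost surely as $k\to-\infty$, whence $\sum_k 1/N_k^{*}(\rho)!<\infty$ a.s.\ and the conditional Borel--Cantelli lemma gives $I(\rho)<\infty$ a.s.

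\emph{Monotonicity and strict inequality for $\rho_1<\rho_2$.} In the prelimit, $T_0(M)$ is manifestly nondecreasing in $M$: every round removing someone from $\{1,\ldots,M_1\}$ also removes someone from $\{1,\ldots,M_2\}$ for $M_1\le M_2$. The joint fdd-convergence in Theorem~\ref{mainT_{0}} transfers this to $T_0^{*}(\rho_1)\le T_0^{*}(\rho_2)$ a.s.\ in the coupling. For the strict inequality, fix $k_0\in\Z$ and consider $A_{k_0}:=\{T^{*}(\rho_1)\le k_0<T^{*}(\rho_2)\}$. The identities $T^{*}(\rho)=\inf\{k:N_k^{*}(\rho)=1\}=\inf\{k:S_{k,2}^{*}>L_k(\rho)\}$ valid in the coupling give
$$A_{k_0}=\{L_{k_0}(\rho_1)<S_{k_0,2}^{*}\le L_{k_0}(\rho_2)\},$$
and since $S_{k_0,2}^{*}\eqdist S_2^{*}$ by stationarity while $1<L_{k_0}(\rho_1)<L_{k_0}(\rho_2)$, Proposition~\ref{prop:no_atoms} yields $\Prob(A_{k_0})>0$. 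On $A_{k_0}$, the round at level $T^{*}(\rho_2)-1\ge T^{*}(\rho_1)$ is conclusive for $\rho_2$ (it collapses $N_{T^{*}(\rho_2)-1}^{*}(\rho_2)\ge 2$ to $N_{T^{*}(\rho_2)}^{*}(\rho_2)=1$) but it lies outside the summation range $\{j<T^{*}(\rho_1)\}$ defining $T_0^{*}(\rho_1)$. Combined with the pointwise implication ``conclusive for $\rho_1$ $\Rightarrow$ conclusive for $\rho_2$'' on $\{j<T^{*}(\rho_1)\}$ (valid because $N_j^{*}(\rho_1)\le N_j^{*}(\rho_2)$), this gives $T_0^{*}(\rho_2)\ge T_0^{*}(\rho_1)+1$ on $A_{k_0}$.

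\emph{Nondegeneracy and distinctness of laws.} If $T_0^{*}(\rho)=c$ a.s.\ for some $c\in\Z$, then $T^{*}(\rho)=c+I(\rho)\ge c$ a.s., contradicting $\Prob\{T^{*}(\rho)=k\}>0$ for every $k<c$ from Theorem~\ref{mainT}. Distinctness of the laws of $X:=T_0^{*}(\rho_1)$ and $Y:=T_0^{*}(\rho_2)$ now follows: if $X\eqdist Y$, then the coupling $X\le Y$ a.s.\ forces $\Prob(X\le k<Y)=\Prob(X\le k)-\Prob(Y\le k)=0$ for every $k\in\Z$, so $\Prob(X<Y)=0$ by countable union, contradicting $\Prob(A_{k_0})>0$. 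I expect the only delicate point to be a rigorous realization of the coupling used in the monotonicity and strict-inequality step, which is most cleanly done by invoking the space-time field of Theorem~\ref{main1_time} together with the deterministic expressions of $T^{*}$ and $T_0^{*}$ in terms of this field.
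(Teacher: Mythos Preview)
Your proof is correct and follows the same overall architecture as the paper's: nondegeneracy via $T^{*}(\rho)\ge T_0^{*}(\rho)$ together with $\Prob\{T^{*}(\rho)=k\}>0$ for all $k$, monotonicity via the implication ``inconclusive for $\rho_2$ at level $j<T^{*}(\rho_1)$ $\Rightarrow$ inconclusive for $\rho_1$'' (which is what the paper also uses), and distinctness from strict inequality on a positive-probability set.

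The notable difference is your treatment of the \emph{strict} inequality, which is cleaner than the paper's. The paper exhibits a rather elaborate event involving $S_{0,2}^{*},S_{0,3}^{*},S_{0,4}^{*},S_{1,2}^{*},S_{1,3}^{*},S_{2,2}^{*}$ to force $T^{*}(\rho_1)=1$, $T^{*}(\rho_2)=2$ and the intermediate round to be conclusive for $\rho_2$. You instead observe that on $A_{k_0}=\{L_{k_0}(\rho_1)<S_{k_0,2}^{*}\le L_{k_0}(\rho_2)\}$ (positive probability by Proposition~\ref{prop:no_atoms}) one has $T^{*}(\rho_1)\le k_0<T^{*}(\rho_2)$, and that the round $T^{*}(\rho_2)-1$ is \emph{automatically} conclusive for $\rho_2$ because $N^{*}_{T^{*}(\rho_2)-1}(\rho_2)\ge 2>1=N^{*}_{T^{*}(\rho_2)}(\rho_2)$ by the very definition of $T^{*}$. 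This removes the need for any handcrafted event. Two minor remarks: your justification ``valid because $N_j^{*}(\rho_1)\le N_j^{*}(\rho_2)$'' for the conclusive-implication should more precisely invoke the coupling $N_{j+1}^{*}(\rho)=K^{(j)}(N_j^{*}(\rho))$ (if some $\xi_i^{(j)}=0$ with $i\le N_j^{*}(\rho_1)$ then also $i\le N_j^{*}(\rho_2)$); and your separate proof that $I(\rho)<\infty$ a.s.\ is not needed here, since this is already established in the proof of Theorem~\ref{mainT_{0}}.
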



\subsection{Passing to the standard tetration}\label{sec::st_tetration}

With some adjustments to be explained next, it is possible to replace $E_{n}(\rho)$ in our results by the simpler iterations of exponentials $\widetilde E_n(\rho)$, called \emph{standard tetration}, viz.
\begin{equation}\label{eq:standard tetration}
\widetilde E_{0}(\rho)\,:=\,\rho\quad\text{and}\quad
\widetilde E_{n}(\rho)\,:=\,\underbrace{\exp\circ\ldots\circ\exp}_{\textrm{$n$ times}}(\rho)
\quad\text{for }n\in\N\,\text{ and }\,\rho\in\R.
\end{equation}
Define the function $f$ by
\begin{equation}\label{f_def}
f(z)\,:=\,\lim_{n\to\infty}{L}_{n}(\widetilde E_{n}(z)), \quad z\in\R.
\end{equation}
Lemma \ref{f_existence} in the Appendix shows that $f$ is well-defined for all $z\in\R$, continuous and strictly increasing on $\R$. It follows from the definition that $f$ satisfies the functional equation
\begin{equation}\label{f_equation}
f(z)=1+\log f(e^z), \quad z\in\R,
\end{equation}
and it conjugates the dynamics generated by the mappings $z\mapsto E_{1}(z)$ and $z\mapsto\widetilde{E}_{1}(z)$ in the sense that
\begin{equation}\label{f_conjugates}
{E}_{n}(f(z))=f(\widetilde E_{n}(z)),\quad\text{for all }n\in\Z\text{ and }z\in\R.
\end{equation}
We refer to \cite[Chapter 8]{KuczmaChoGer:90} for a general theory of such conjugacy relations.

The monotonicity of $f$ ensures that the limit $f(-\infty):=\lim_{z\to-\infty} f(z)$ exists and is $\ge 1$. Numerical calculations show that actually
$$
f(-\infty)\ =\ 1+\log f(0)\ \approx\ 1.6130198923451345686407,
$$
and so the image of $\R$ under $f$ is slightly smaller than the interval $(1,\infty)$.

\begin{figure} [t]
\centering
     \includegraphics[width=10cm]{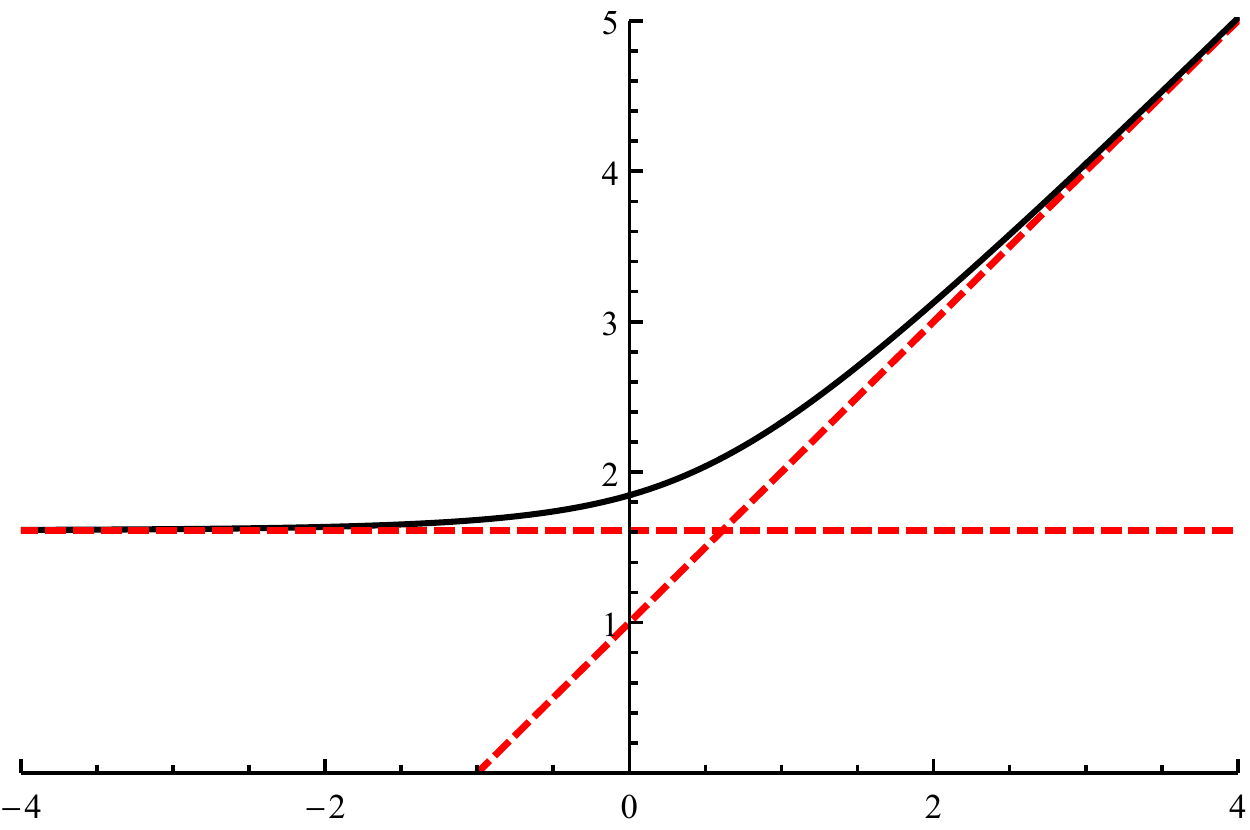}
\caption{The graph of the function $f$ (solid black) together with the asymptotes $y=f(-\infty)$ and $y=x+1$ (dashed red)}.
\label{fig:f}
\end{figure}

Note that in the subsequent corollaries on convergence along $\widetilde E_{n}(\rho)$, the argument $\rho$ takes values in $\R$ rather than in $(1,\infty)$ as in the previous theorems.

\begin{Cor}\label{mainT_tilde}
With $\widetilde T^{*}(\rho):=T^{*}(f(\rho))$ for $\rho\in\R$, we have
\begin{equation*}
\left(T([\widetilde E_{n}(\rho)])-n\right)_{\rho\in\R}\ \todistrfd\ (\widetilde T^{*}(\rho))_{\rho\in\R}\quad\text{as }n\to\infty.
\end{equation*}
\end{Cor}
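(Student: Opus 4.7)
The strategy is to reduce the convergence along the standard tetration $\widetilde E_n(\rho)$ to the already established convergence along the modified tetration $E_n(\rho)$ from Theorem \ref{mainT}, using the conjugating function $f$ from \eqref{f_def}--\eqref{f_conjugates} as a bridge, together with the monotonicity of $M\mapsto T(M)$ and the stochastic continuity of $T^{*}$.

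First, I would record the two basic facts that make the reduction possible. Since Lemma \ref{f_existence} (in the Appendix) asserts that $f$ is continuous and strictly increasing on $\R$ with $f(-\infty)\approx 1.613>1$, we have $f(\rho)>1$ for every $\rho\in\R$, so that $T^{*}(f(\rho))$ is well-defined by Theorem \ref{mainT}. Moreover, by the very definition of $f$,
\[
L_{n}(\widetilde E_{n}(\rho))\ \longrightarrow\ f(\rho)\quad\text{as }n\to\infty,
\]
and hence, fixing $\rho\in\R$ and $\eps>0$ small enough that $f(\rho)-\eps>1$, the monotonicity of $E_{n}$ implies that for all sufficiently large $n$
\begin{equation*}
E_{n}(f(\rho)-\eps)\,<\,\widetilde E_{n}(\rho)\,<\,E_{n}(f(\rho)+\eps).
\end{equation*}

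Next, I would exploit monotonicity of the map $M\mapsto T(M)$. Since all three quantities above tend to $+\infty$ extremely fast, the floors do not cause any trouble: for sufficiently large $n$,
\[
T\bigl([E_{n}(f(\rho)-\eps)]\bigr)\ \le\ T\bigl([\widetilde E_{n}(\rho)]\bigr)\ \le\ T\bigl([E_{n}(f(\rho)+\eps)]\bigr).
\]
Subtracting $n$ and invoking Theorem \ref{mainT}, the left- and right-hand sides converge in distribution to $T^{*}(f(\rho)-\eps)$ and $T^{*}(f(\rho)+\eps)$, respectively. Since $T^{*}$ is stochastically continuous in $\rho$ (Theorem \ref{mainT}), letting $\eps\downarrow 0$ yields
\[
T\bigl([\widetilde E_{n}(\rho)]\bigr)-n\ \todistr\ T^{*}(f(\rho))\ =\ \widetilde T^{*}(\rho).
\]

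For the finite-dimensional convergence, the same sandwich argument applies simultaneously to finitely many points $\rho_{1}<\ldots<\rho_{k}$ in $\R$. Picking $\eps>0$ small enough that $f(\rho_{1})-\eps>1$ and the $2k$ numbers $f(\rho_{i})\pm\eps$ are all distinct, one obtains a joint sandwich of the vector $(T([\widetilde E_{n}(\rho_{i})])-n)_{i=1}^{k}$ between the two vectors $(T([E_{n}(f(\rho_{i})\mp\eps)])-n)_{i=1}^{k}$. By Theorem \ref{mainT} these outer vectors converge jointly in distribution to $(T^{*}(f(\rho_{i})-\eps))_{i=1}^{k}$ and $(T^{*}(f(\rho_{i})+\eps))_{i=1}^{k}$, and stochastic continuity of $T^{*}$ lets us take $\eps\downarrow 0$ to obtain the desired joint limit $(\widetilde T^{*}(\rho_{i}))_{i=1}^{k}$.

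The only delicate point in the plan is making the sandwich argument pass to weak joint limits, but this is standard: one can either use the Portmanteau characterization together with continuity points of the distribution of the limit (noting that $T^{*}(\rho)$ takes values in $\Z$, so continuity points are simply non-integers, or one works with the half-integer grid), or apply Skorokhod's representation to the two bounding sequences on a common probability space. No other serious obstacle arises, since the rapid growth of $\widetilde E_{n}$ and $E_{n}$ makes the floor corrections negligible and since $f$ already provides the exact coordinate change needed to match the two tetrations.
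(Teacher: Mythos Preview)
Your proposal is correct, and the sandwich argument goes through: the joint version works because Theorem~\ref{mainT} gives convergence of the full $2m$-dimensional vector of lower and upper bounds, and for integer-valued limits the ``joint distribution functions'' $\Prob\{T^{*}(f(\rho_{i})\pm\eps)\le k_{i}\text{ for all }i\}$ converge to the correct limit as $\eps\downarrow 0$ by right-continuity of the c\`adl\`ag paths and stochastic continuity of $T^{*}$ at each $f(\rho_{i})$. This is essentially a multivariate version of the argument the paper gives for Lemma~\ref{lem:T_rho_{n}}.

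The paper, however, takes a different and somewhat more direct route for this family of corollaries (see the remark opening Subsection~\ref{subsec:main2p}). Rather than squeezing through Theorem~\ref{mainT}, it goes back to the underlying $S$-variables via the duality $\{T(M)\le n+k\}=\{S_{2}^{(n+k)}>M\}$, rewrites
\[
\Prob\{T([\widetilde E_{n}(\rho_{i})])-n\le k_{i},\,i=1,\ldots,m\}
=\Prob\{L_{n+k_{i}}(S_{2}^{(n+k_{i})})>L_{n+k_{i}}(\widetilde E_{n}(\rho_{i})),\,i=1,\ldots,m\},
\]
and then uses Theorem~\ref{main1_time} together with the deterministic convergence $L_{n+k}(\widetilde E_{n}(\rho))\to L_{k}(f(\rho))$ and the fact that the law of $S_{2}^{*}$ has no atoms (Prop.~\ref{prop:no_atoms}). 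This avoids the $\eps$-squeeze and the appeal to stochastic continuity of $T^{*}$ altogether. Your approach has the advantage of treating Theorem~\ref{mainT} as a black box (so it transfers verbatim to other functionals once their $E_{n}$-versions are known), whereas the paper's approach is cleaner in that it yields the limit in one step without a secondary $\eps\downarrow 0$ passage.
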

\begin{Cor}\label{main2p}
Put $\widetilde N^{*}(\rho):={N}^{*}(f(\rho))$ and $\widetilde N^{*}_{k}(\rho):= {N}^{*}_{k}(f(\rho))$ for $\rho\in\R$. Then, as $n\to\infty$,
\begin{align}
\left(N^{(n)}_{[\widetilde E_{n}(\rho)]}\right)_{\rho\in\R}\ &\todistrfd\ (\widetilde N^{*}(\rho))_{\rho\in\R}\label{N_n_fdd_tilde}
\shortintertext{as well as}
\left(N^{(n+k)}_{[\widetilde E_{n}(\rho)]}\right)_{\rho\in\R,k\in\Z}\ &\todistrfd\ (\widetilde N_{k}^{*}(\rho))_{\rho\in\R,k\in\Z}.\nonumber
\end{align}
Moreover, the limit process $(\widetilde N^{*}(\rho))_{\rho\in\R}$ satisfies the stochastic fixed-point equation (compare \eqref{eq:fixed_point_N_star})
\begin{equation*}
\big(K(\widetilde N^{*}(\rho))\big)_{\rho>0}\ \eqdist\ \big(\widetilde N^{*}(\log \rho)\big)_{\rho>0}.
\end{equation*}
\end{Cor}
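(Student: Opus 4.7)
The plan is to derive everything in Corollary~\ref{main2p} from the corresponding ``$E_{n}$-versions'' (Theorems~\ref{main2} and~\ref{main2_time}) by a sandwich argument based on the conjugacy $E_{n}(f(\rho))=f(\widetilde E_{n}(\rho))$ and the defining limit $L_{n}(\widetilde E_{n}(\rho))\to f(\rho)$ from \eqref{f_def}. The latter says that for any $\rho\in\R$ and any $\delta>0$ there exists $n_{0}=n_{0}(\rho,\delta)$ such that, for all $n\ge n_{0}$,
$$
f(\rho)-\delta\ \le\ L_{n}(\widetilde E_{n}(\rho))\ \le\ f(\rho)+\delta,
$$
and, after applying the strictly increasing map $E_{n}$, this sandwiches $\widetilde E_{n}(\rho)$ between $E_{n}(f(\rho)-\delta)$ and $E_{n}(f(\rho)+\delta)$. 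Since $f(\rho)\ge f(-\infty)>1$, the arguments $f(\rho)\pm\delta$ eventually lie in $[1,\infty)$ for $\delta$ small enough, so Theorem~\ref{main2} applies at these shifted points.

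Next I would use the fact that $M\mapsto N_{M}^{(n)}$ is nondecreasing, which yields
$$
N^{(n)}_{[E_{n}(f(\rho)-\delta)]}\ \le\ N^{(n)}_{[\widetilde E_{n}(\rho)]}\ \le\ N^{(n)}_{[E_{n}(f(\rho)+\delta)]}
$$
for all $n\ge n_{0}$. For finitely many distinct parameters $\rho_{1},\ldots,\rho_{\ell}\in\R$ one chooses $n_{0}$ large enough so that the sandwich holds simultaneously for all of them, and Theorem~\ref{main2} yields joint convergence of the bounding vectors to $(N^{*}(f(\rho_{i})\pm\delta))_{i=1}^{\ell}$. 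Letting $\delta\downarrow 0$ and invoking the continuity in probability of $N^{*}$ at each $f(\rho_{i})$ (also provided by Theorem~\ref{main2}) then delivers the claimed f.d.d.\ convergence with limit $(\widetilde N^{*}(\rho_{i}))_{i=1}^{\ell}=(N^{*}(f(\rho_{i})))_{i=1}^{\ell}$. The space-time statement $(N^{(n+k)}_{[\widetilde E_{n}(\rho)]})\todistrfd(\widetilde N^{*}_{k}(\rho))$ is proved identically using Theorem~\ref{main2_time} in place of Theorem~\ref{main2}, since the sandwich on $\widetilde E_{n}(\rho)$ does not depend on $k$.

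For the stochastic fixed-point equation, I would substitute $\rho=f(e^{z})$ in \eqref{eq:fixed_point_N_star} and use the functional equation \eqref{f_equation}, which reads $f(z)=1+\log f(e^{z})$. This gives
$$
\bigl(K(N^{*}(f(e^{z})))\bigr)_{z\in\R}\ \eqdist\ \bigl(N^{*}(1+\log f(e^{z}))\bigr)_{z\in\R}\ =\ \bigl(N^{*}(f(z))\bigr)_{z\in\R},
$$
where the joint (in $z$) distributional identity is valid because Theorem~\ref{main2} states \eqref{eq:fixed_point_N_star} as equality of processes. Writing this as $(K(\widetilde N^{*}(e^{z})))_{z\in\R}\eqdist(\widetilde N^{*}(z))_{z\in\R}$ and substituting $\rho=e^{z}>0$ yields the advertised identity $(K(\widetilde N^{*}(\rho)))_{\rho>0}\eqdist(\widetilde N^{*}(\log \rho))_{\rho>0}$.

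The only non-routine point is the passage $\delta\downarrow 0$ in the sandwich: it requires continuity in probability of $N^{*}$ at the specific points $f(\rho_{i})$, which is exactly what Theorem~\ref{main2} supplies (and, for the space-time version, the analogous property at $f(\rho_{i})$ of $N^{*}_{k}$ in its $\rho$-variable, which follows from the fact that $N^{*}_{k}(\rho)=N^{*}(\rho)$ in distribution for each fixed $k$ by stationarity, or alternatively by repeating the proof of continuity in probability of $N^{*}$ for $N^{*}_{k}$). Beyond that, the argument is a direct transportation of the $E_{n}$-scaled results through the conjugating homeomorphism $f$.
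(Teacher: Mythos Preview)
Your argument is correct and takes a genuinely different route from the paper. The paper goes back to the duality $\{N^{(n)}_{M}\ge k\}=\{S_{k}^{(n)}\le M\}$, rewrites the relevant probabilities as $\Prob\{L_{n}(S^{(n)}_{k_{i}})\le L_{n}(\widetilde E_{n}(\rho_{i}))\}$, and then applies Theorem~\ref{main1} together with Prop.~\ref{prop:no_atoms} (atomlessness of $S_{k}^{*}$) and the limit $L_{n}(\widetilde E_{n}(\rho))\to f(\rho)$. You instead stay at the level of the counting processes: sandwich $\widetilde E_{n}(\rho)$ between $E_{n}(f(\rho)\pm\delta)$, invoke Theorem~\ref{main2} (resp.\ \ref{main2_time}) at the shifted arguments, and close the sandwich via the stochastic continuity of $N^{*}$. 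Both approaches ultimately rely on the same ingredient (the continuity of the law of $S_{k}^{*}$, which underlies the stochastic continuity of $N^{*}$), but yours is more modular in that it treats Theorems~\ref{main2}--\ref{main2_time} as black boxes, while the paper's proof unpacks them once more. Your derivation of the fixed-point equation by substituting $\rho=f(e^{z})$ into \eqref{eq:fixed_point_N_star} and using \eqref{f_equation} is clean and matches what the paper leaves implicit.

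One small slip: in the space-time case you write ``$N^{*}_{k}(\rho)=N^{*}(\rho)$ in distribution for each fixed $k$ by stationarity''. This is not quite right: from $(S^{*}_{k,j})_{j}\eqdist(S^{*}_{j})_{j}$ one gets $N^{*}_{k}(\rho)\eqdist N^{*}(L_{k}(\rho))$, not $N^{*}(\rho)$. This is harmless for your purpose---since $L_{k}$ is continuous and $N^{*}$ is stochastically continuous everywhere on $[1,\infty)$, $\rho\mapsto N^{*}_{k}(\rho)$ is stochastically continuous as well---and your alternative justification (``repeating the proof of continuity in probability'') is also fine.
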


\begin{Cor}\label{Proposition_last_rounds_p}
For $m\in\N$, $\rho\in\R$ let
$$ \widetilde{\mathcal{Z}}_{m,n}(\rho)\ :=\ \Big(N^{(T([\widetilde E_{n}(\rho)])-1)}_{[\widetilde E_{n}(\rho)]},N^{(T([\widetilde E_{n}(\rho)])-2)}_{[\widetilde E_{n}(\rho)]},\ldots,N^{(T([\widetilde E_{n}(\rho)])-m)}_{[\widetilde E_{n}(\rho)]}\Big). $$
Then
$$ \Big(T([\widetilde E_{n}(\rho)])-n,\widetilde{\mathcal{Z}}_{m,n}(\rho)\Big)_{\rho\in\R}\ \todistrfd\ \Big(\widetilde T^{*}(\rho),\widetilde{\mathcal{Z}}^{*}_{m}(\rho)\Big)_{\rho\in\R}, $$
where $\widetilde T^{*}(\rho)$ is as in Corollary \ref{mainT_tilde} and $\widetilde{\mathcal{Z}}^{*}_{m}(\rho):=\mathcal{Z}^{*}_{m}(f(\rho))$.
\end{Cor}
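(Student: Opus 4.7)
The plan is to deduce the corollary from Proposition \ref{Proposition_last_rounds} by composing with the conjugation $f$, in the same spirit as Corollaries \ref{mainT_tilde} and \ref{main2p} are derived from their $E_n$-counterparts. The starting point is the identity
$$\widetilde E_n(\rho)\,=\,E_n(h_n(\rho)),\qquad h_n(\rho)\,:=\,L_n(\widetilde E_n(\rho)),$$
obtained from $E_n\circ L_n=\mathrm{id}$, together with $h_n(\rho)\to f(\rho)$ as $n\to\infty$, which is the very definition \eqref{f_def} of $f$. Substituting yields
$$T([\widetilde E_n(\rho)])-n\,=\,T([E_n(h_n(\rho))])-n,\qquad \widetilde{\mathcal{Z}}_{m,n}(\rho)\,=\,\mathcal{Z}_{m,n}(h_n(\rho)),$$
so the task reduces to verifying that the f.d.d.\ convergence of Proposition \ref{Proposition_last_rounds} persists when each fixed $\rho>1$ is replaced by the deterministic sequence $h_n(\rho_i)\to f(\rho_i)>1$; the estimate $f(\rho_i)\ge f(-\infty)>1$ guarantees that the arguments remain in the relevant parameter range for all large $n$.

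For joint convergence at $\rho_1,\ldots,\rho_k\in\R$ I would run a sandwich argument. Given a small $\epsilon>0$, set $a_i:=f(\rho_i)-\epsilon$ and $b_i:=f(\rho_i)+\epsilon$, so that $a_i<h_n(\rho_i)<b_i$ for all sufficiently large $n$. The canonical monotonicity $N_{M_1}^{(j)}\le N_{M_2}^{(j)}$ for $M_1\le M_2$, inherited recursively from $N_M^{(0)}=M$ and the common Bernoulli sequences $\xi_i^{(n)}$, yields both $T([E_n(a_i)])\le T([E_n(h_n(\rho_i))])\le T([E_n(b_i)])$ and, on the event $\{T([E_n(a_i)])=T([E_n(b_i)])\}$, the coordinate-wise sandwich between $\mathcal{Z}_{m,n}(a_i)$, $\widetilde{\mathcal{Z}}_{m,n}(\rho_i)$ and $\mathcal{Z}_{m,n}(b_i)$. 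Applying Proposition \ref{Proposition_last_rounds} jointly at the $2k$ deterministic arguments $\{a_i,b_i\}$ and letting $n\to\infty$ delivers a joint distributional limit; then sending $\epsilon\downarrow 0$ and invoking stochastic continuity of $T^*(\cdot)$ at $f(\rho_i)$ (Theorem \ref{mainT}) and of $\mathcal{Z}^*_m(\cdot)$ at $f(\rho_i)$ collapses the sandwich to the common value $(T^*(f(\rho_i)),\mathcal{Z}_m^*(f(\rho_i)))=(\widetilde T^*(\rho_i),\widetilde{\mathcal{Z}}_m^*(\rho_i))$.

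The main obstacle I anticipate is the stochastic continuity of $\rho\mapsto \mathcal{Z}^*_m(\rho)$ at a fixed $\rho>1$. By construction $\mathcal{Z}^*_m(\rho)=(N^*_{T^*(\rho)-j}(\rho))_{j=1}^{m}$ with $N^*_k(\rho)=\#\{i\ge 1\colon S^*_{k,i}\le L_k(\rho)\}$, so $N^*_k(\cdot)$ has a jump at a deterministic $\rho$ exactly when $L_k(\rho)$ coincides with some $S^*_{k,i}$; ruling this out requires the marginal laws of the $S^*_{k,i}$ to be atomless. Since the stationary construction of $(V_k)_{k\in\Z}$ in Theorem \ref{main1_time} endows each $(S^*_{k,i})_i$ with the same distribution as $(S^*_i)_i$, this follows from Proposition \ref{prop:no_atoms}. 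Together with the stochastic continuity of $T^*$, this produces the stochastic continuity of $\mathcal{Z}^*_m$ needed to close the sandwich and complete the proof.
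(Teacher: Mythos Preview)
Your approach is correct, but it differs from the paper's. The paper does not deduce Corollary~\ref{Proposition_last_rounds_p} from Proposition~\ref{Proposition_last_rounds} via a sandwich/continuity transfer; instead it reruns the short proof of Proposition~\ref{Proposition_last_rounds} verbatim with $\widetilde E_{n}$ in place of $E_{n}$, invoking the space--time convergence in its tilde form (the second part of Corollary~\ref{main2p}). Concretely, one writes
\[
\Prob\big\{T([\widetilde E_{n}(\rho)])-n=k,\ N_{[\widetilde E_{n}(\rho)]}^{(n+k-i)}=l_{i},\ i=1,\dots,m\big\}
\]
and lets $n\to\infty$ using $N_{[\widetilde E_{n}(\rho)]}^{(n+k)}\todistrfd \widetilde N_{k}^{*}(\rho)=N_{k}^{*}(f(\rho))$, which in turn rests only on $L_{n+j}(\widetilde E_{n}(\rho))\to L_{j}(f(\rho))$ and the atomlessness of the $S_{k,j}^{*}$ (Prop.~\ref{prop:no_atoms}). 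This is what the paper means by ``all previous results are checked in an essentially analogous way''.

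Your route treats Proposition~\ref{Proposition_last_rounds} as a black box and transfers it to the tilde setting through monotonicity and stochastic continuity; this is perfectly sound and has the merit of being reusable for any statement where the relevant limit is stochastically continuous in $\rho$. The cost is the extra step of establishing stochastic continuity of $\rho\mapsto\mathcal{Z}_{m}^{*}(\rho)$, which involves a random time index $T^{*}(\rho)$ and therefore requires a truncation (first confine $T^{*}(\rho)$ to a finite window, then use atomlessness for the finitely many relevant $N_{k}^{*}$); you sketch this correctly. The paper's direct rerun avoids this detour because the atomlessness of $S_{k,j}^{*}$ enters at the level of the underlying duality with $S_{j}^{(n)}$, so no separate continuity statement for $\mathcal{Z}_{m}^{*}$ is needed.
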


\begin{Cor}\label{mainT_{0}_tilde}
As $n\to\infty$ and with $\widetilde T_{0}^{*}(\rho):=T_{0}^{*}(f(\rho))$, the following convergence of the finite-dimensional distributions holds:
$$ \left(T_{0}([\widetilde E_{n}(\rho)])-n\right)_{\rho\in\R}\ \todistrfd\ (\widetilde T_{0}^{*}(\rho))_{\rho\in\R}. $$
The random variables $\widetilde T_{0}^{*}(\rho)$, $\rho\in\R$, are integer-valued, nondegenerate and have pairwise different distributions.
\end{Cor}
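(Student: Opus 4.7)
The plan is to transfer Theorem~\ref{mainT_{0}} from the modified to the standard tetration using the conjugation \eqref{f_conjugates} together with the monotonicity of $M\mapsto T_0(M)$. Monotonicity is easy: if $M_1\le M_2$, then every round in which some player with label in $\{1,\dots,M_1\}$ is eliminated is also a round in which some player with label in $\{1,\dots,M_2\}$ is eliminated, and extending the sum defining $T_0(M_1)$ up to $T(M_2)-1$ adds only zero terms. The conjugation \eqref{f_conjugates} yields $\widetilde E_n(\rho)=E_n(\rho_n)$ with $\rho_n:=L_n(\widetilde E_n(\rho))\to f(\rho)$ by \eqref{f_def}. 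Hence, for any fixed $\epsilon>0$ and all $n$ large enough,
$$
[E_n(f(\rho)-\epsilon)]\ \le\ [\widetilde E_n(\rho)]\ \le\ [E_n(f(\rho)+\epsilon)],
$$
so monotonicity of $T_0$ combined with Theorem~\ref{mainT_{0}} shows that $T_0([\widetilde E_n(\rho)])-n$ is asymptotically sandwiched in distribution between $T_0^{*}(f(\rho)-\epsilon)$ and $T_0^{*}(f(\rho)+\epsilon)$.

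To close the sandwich, I would let $\epsilon\downarrow 0$. Since $T_0^{*}$ is integer-valued and monotone in its argument, the required step is equivalent to stochastic continuity of $\rho'\mapsto T_0^{*}(\rho')$, which I would derive from the representation
$$
T_0^{*}(\rho')\,=\,T^{*}(\rho')-\sum_{j=-\infty}^{T^{*}(\rho')-1}\1\{N_j^{*}(\rho')=N_{j+1}^{*}(\rho')\}
$$
together with the stochastic continuity of $T^{*}(\cdot)$ from Theorem~\ref{mainT} and of the coordinate processes $N_j^{*}(\cdot)$ inherited from the constructions underlying Theorems~\ref{main1_time} and~\ref{main2_time}. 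Applying the same sandwich jointly at finitely many $\rho_1<\dots<\rho_k$ and invoking the finite-dimensional form of Theorem~\ref{mainT_{0}} then upgrades the one-dimensional statement to the claimed convergence of finite-dimensional distributions.

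The three supplementary properties of $\widetilde T_0^{*}(\rho)$ fall out with a one-line argument each. Integer-valuedness is immediate from $\widetilde T_0^{*}(\rho)=T_0^{*}(f(\rho))$ and the definition of $T_0^{*}$; nondegeneracy holds because $f(\rho)\ge f(-\infty)>1$ for every $\rho\in\R$, so that Proposition~\ref{prop:T_0_ast_properties} applies; pairwise distinctness of the laws follows from the same proposition combined with the strict monotonicity of $f$.

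The main obstacle is the stochastic continuity of $T_0^{*}(\cdot)$, which requires controlling the indicators $\1\{N_j^{*}(\rho')=N_{j+1}^{*}(\rho')\}$ jointly over the (random) range of $j$. If a direct attack proves cumbersome, an alternative route that bypasses stochastic continuity is available through Corollary~\ref{Proposition_last_rounds_p}: a round with $k\ge 3$ survivors is inconclusive with probability at most $1/k!$, so with probability arbitrarily close to $1$ every inconclusive round lies among the last $m$ ones (for $m$ sufficiently large uniformly in $n$); on that finite terminal stretch the inconclusive counter is a continuous functional of $\widetilde{\mathcal Z}_{m,n}(\rho)$, and the joint convergence supplied by Corollary~\ref{Proposition_last_rounds_p} transfers directly to $T_0([\widetilde E_n(\rho)])-n$.
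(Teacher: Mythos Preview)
Your alternative route is exactly what the paper intends. The paper does not give a dedicated proof of this corollary but states that ``all previous results are checked in an essentially analogous way,'' and for $T_{0}$ this means rerunning the proof of Theorem~\ref{mainT_{0}} with $\widetilde E_{n}$ in place of $E_{n}$: the $T_{0,m}$-part converges by Corollary~\ref{Proposition_last_rounds_p}, and the $V_{0,m}$-part is negligible by the very same estimate (the bound $p(n,k)\le 2$ and the domination $\sum_{k\ge 2}\frac{1}{k!}\Prob\{K^{(1\uparrow m-1)}(k)>1\}$ are uniform in the starting value and hence insensitive to whether one uses $E_{n}(\rho)$ or $\widetilde E_{n}(\rho)$). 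Your handling of the three supplementary properties is also identical to the paper's, which cites Proposition~\ref{prop:T_0_ast_properties} together with the strict monotonicity of $f$.

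Your primary route via sandwiching is a genuinely different idea, and the monotonicity of $M\mapsto T_{0}(M)$ you state is correct and gives clean two-sided bounds. However, the closing step---stochastic continuity of $\rho'\mapsto T_{0}^{*}(\rho')$---is not as immediate as you suggest. Stochastic continuity of $T^{*}$ and of each individual $N_{j}^{*}$ does not automatically transfer to the infinite sum $\sum_{j<T^{*}(\rho')}\1\{N_{j}^{*}(\rho')=N_{j+1}^{*}(\rho')\}$: one needs a uniform-in-$\epsilon$ tail bound on the number of inconclusive rounds for $\rho-\epsilon$ occurring before time $T^{*}(\rho)-m$, and since inconclusiveness becomes \emph{more} likely as $\rho$ decreases, this requires an argument (e.g.\ bounding below by $N_{j}^{*}(\rho_{0})$ for a fixed $\rho_{0}<\rho$). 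Filling this in essentially forces you back into the tail estimate from the proof of Theorem~\ref{mainT_{0}}, so the two routes converge; the alternative route is simply the shorter path.
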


All previous results are checked in an essentially analogous way by drawing on the results in the preceding subsections. This will be exemplified in Subsection \ref{subsec:main2p} by giving a detailed argument for \eqref{N_n_fdd_tilde} of Corollary \ref{main2p}.

\section{The Poisson-Dirichlet coalescent}\label{sec:PD}

The leader-election procedure based on records arises quite naturally when studying the number of collisions in the \emph{Poisson-Dirichlet coalescent} as we will briefly explain next. An exchangeable coalescent with multiple collisions is a continuous-time Markov process $\Pi_{n}:=(\Pi_{n}(t))_{t\ge 0}$ taking values in the set of partitions of $[n]:=\{1,2,\ldots,n\}$. Starting from the trivial partition into the singletons $\Pi_{n}(0)=\{\{1\},\{2\},\ldots,\{n\}\}$, it evolves according to the following rule. If at some time $t\ge 0$ the number of blocks in the current partition equals $m$, then every $k$-tuple of blocks merges into one block with intensity
$$
\lambda_{m,k}=\int_{0}^{1} x^{k-2}(1-x)^{n-k}\Lambda({\rm d}x),\quad 2\le k\le m,
$$
where $\Lambda$ is a finite measure on $[0,1]$. In view of this characterization exchangeable coalescents with multiple collisions are also called $\Lambda$-coalescents. The special choice of intensities is necessary and sufficient for the consistency of the family of exchangeable processes $(\Pi_{n})_{n\in\N}$. The latter property allows the construction of the infinite coalescent $\Pi:=(\Pi(t))_{t\ge 0}$, the process
with state space of partitions of $\N$ such that the restriction of $\Pi$ to $[n]$ is $\Pi_{n}$ for every $n$. We refer to the seminal paper \cite{Pitman:99} and to the surveys \cite{NBerestycki:09,GneIksMar:14} for further information on $\Lambda$-coalescents.

The definition of the $\Lambda$-coalescent allows only one collision at a time. In \cite{MoehleSagitov:01}, see also \cite{Schweinsberg:00}, the concept of an exchangeable coalescent with {\it simultaneous} multiple collisions was proposed for which the evolution proceeds as follows: if at some time $t\ge 0$ the number of blocks equals $m$, they
merge into $j$ blocks consisting of $k_{1},k_{2},\ldots,k_{j}$ initial blocks ($k_{1}+k_{2}+\ldots+k_{j}=m$ and $k_{1}\ge k_{2}\ge \cdots \ge k_{j}$, $k_{1}\ge 2$) with intensity
\begin{align}\label{merge_{i}ntensities}
&\psi_{j}(k_{1},k_{2},\ldots,k_{j})\ =\ \int_{\Delta^{*}}\sum_{\overset{{i_{1},\ldots,i_{j}\in\N}}{\textrm{\tiny all distinct}}}x_{i_{1}}^{k_{1}}\cdots x_{i_{j}}^{k_{j}}\ \frac{\Xi(dx)}{(x,x)},
\shortintertext{where}
&\Delta^{*}\ :=\ \left\{x=(x_{1},x_{2},\ldots):x_{1}\ge x_{2}\ge \cdots\ge 0,\sum_{i=1}^{\infty}x_{i}=1\right\},\nonumber
\end{align}
$(x,x):=\sum_{i=1}^{\infty}x_{i}^2$ and $\Xi(\cdot)$ denotes a finite measure on $\Delta^{*}$. Exchangeable coalescents with simultaneous multiple collisions are called $\Xi$-coalescents.\footnote{Formula \eqref{merge_{i}ntensities} for the intensities is not the most general one ensuring consistency because the measure $\Xi$ may be supported by a larger simplex $\Delta:=\{x=(x_{1},x_{2},\ldots):x_{1}\ge x_{2}\ge \cdots\ge 0,\sum_{i=1}^{\infty}x_{i}\le 1\}$ and the formula for $\psi_{j}$ may be more involved (see Eq.\ (11) in \cite{Schweinsberg:00}). If $\Xi$ is concentrated on $\Delta^{*}$, this formula reduces to \eqref{merge_{i}ntensities}, see (2) in \cite{Moehle:10}.} The most widely known examples of non-trivial probability measures on $\Delta^{*}$ are the Poisson-Dirichlet distributions ${\rm PD}_{\theta}$ for $\theta>0$, and the Poisson-Dirichlet coalescent $(\Pi^{\theta}_{n}(t))_{t\ge 0}$ is the $\Xi$-coalescent when choosing
$$ \Xi({\rm d}x)/(x,x)\ =\ {\rm PD}_{\theta}({\rm d}x). $$
This parametric family was introduced in \cite{Sagitov:03} where these coalescents appear as the limiting processes for an exchangeable reproduction model described by a compound multinomial distribution, see \cite[Section 3]{Sagitov:03} for details.

\vspace{.1cm}
We are aware of two works \cite{Marynych:10b,Moehle:10} on the asymptotic analysis (for large $n$) of relevant functionals of the Poisson-Dirichlet coalescent like the total number of mergers (with simultaneous mergers counted as one). If $X_{\theta}(n)$ denotes this number when starting with $n$ blocks, it was shown in \cite{Marynych:10b} that for all $\theta>0$ and as $n\to\infty$
\begin{equation}\label{wlln_pd_coal}
\frac{X_{\theta}(n)}{\log^{*}_{\theta}(n)} \to 1 \quad \text{in } L^{r} \text{ for all } r>0,
\end{equation}
where the integer-valued function $\log^{*}_{\theta}(\cdot)$ is defined by
$$ \log^{*}_{\theta}(x) := 0,\quad x\in [0,x_{0}),\quad \log^{*}_{\theta}(x) := 1+\log^{*}_{\theta}(\theta\log x),\quad x\ge x_{0}, $$
for an arbitrary $x_{0}>e^{2\theta\vee 1}$ (this ensures that iterations will eventually end up in $[0,x_{0})$). For $\theta=1$, $\log^{*}_{1}$ can be replaced by the iterated-logarithm defined in \eqref{log-star-definition}, and therefore \eqref{wlln_pd_coal} restated as
\begin{equation}\label{wlln_pd_coal2}
\frac{X_{1}(n)}{\log^{*}n} \to 1 \quad \text{in } L^{r} \text{ for all } r>0.
\end{equation}
Obviously, this result leaves open the question about the asymptotic behavior of the distribution of $X_{1}(n)$ which has been the initial motivation for the present work. By suitable translation, our results on the leader-election procedure will enable us to establish the following theorem which states convergence in law of $X_{1}(k_{n})$ along suitable subsequences $k_{n}$. These subsequences are (the integer parts of) the standard tetration $\widetilde E_{n}(\rho)$  defined in \eqref{eq:standard tetration} (the modified tetration $E_n(\rho)$ could be used as well).

\begin{Theorem}\label{PD-theorem}
For any fixed $\rho\in\R$,
$$ X_{1}([\widetilde E_{n}(\rho)])-n\ \todistr\ \widetilde{T}_{0}^{*}(\rho)\quad\text{as }n\to\infty, $$
where the $\widetilde{T}_{0}^{*}(\rho)=T_{0}^{*}(f(\rho))$, $\rho\in\R$, are the integer-valued, nondegenerate random variables defined in Theorem~\ref{mainT_{0}} and Corollary \ref{mainT_{0}_tilde}.
\end{Theorem}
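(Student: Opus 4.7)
The plan is to reduce Theorem~\ref{PD-theorem} to Corollary~\ref{mainT_{0}_tilde} by establishing the distributional identity $X_{1}(n)\eqdist T_{0}(n)$ for every $n\in\N$. Once this is done, the theorem follows at once by substituting $n\mapsto [\widetilde{E}_{n}(\rho)]$ and invoking Corollary~\ref{mainT_{0}_tilde}.

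First I would analyze the transition law of the block-counting jump chain of the Poisson--Dirichlet coalescent $\Pi^{1}$. Starting from the intensity formula \eqref{merge_{i}ntensities} with $\Xi({\rm d}x)/(x,x) = {\rm PD}_{1}({\rm d}x)$, summing over all collision patterns $(k_{1},\ldots,k_{j})$ with $k_{1}+\cdots+k_{j}=m$ and $k_{1}\ge 2$, and normalizing, I would show that the probability $p_{m,j}$ that the block-counting chain jumps from $m$ to $j$ blocks equals
\[
p_{m,j}\ =\ \frac{|s(m,j)|}{m!-1}\qquad (1\le j<m),
\]
where $|s(m,j)|$ denotes the unsigned Stirling numbers of the first kind. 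This identity is essentially the content of the Ewens sampling formula with parameter $\theta=1$: sampling $m$ points i.i.d.\ from a ${\rm PD}_{1}$-distributed mass gives rise to a partition of $[m]$ whose number of blocks has distribution $|s(m,j)|/m!$, and conditioning on the event that at least one collision occurs (which has probability $1-1/m!$) yields the stated ratio. If needed, one can shortcut this step by invoking the explicit transition rates computed in \cite{Moehle:10} and \cite{Marynych:10b,Sagitov:03}.

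Next I would compare this with the leader-election procedure. By R\'enyi's theorem, the number of survivors in one round starting from $m$ players is $K(m)=\xi_{1}+\cdots+\xi_{m}$ with $\Prob\{K(m)=j\}=|s(m,j)|/m!$ for $1\le j\le m$ (the classical records--Stirling identity). An inconclusive round is precisely the event $\{K(m)=m\}$, of probability $1/m!$, so the \emph{conclusive} transition law is
\[
\Prob\{K(m)=j\mid K(m)<m\}\ =\ \frac{|s(m,j)|}{m!-1}\qquad (1\le j<m),
\]
matching $p_{m,j}$ above. Combining this with the Markov property, the chain obtained from the leader-election procedure by recording $(N_{n}^{(\tau_{k})})_{k\ge 0}$, where $\tau_{0}=0$ and $\tau_{k+1}$ is the $(k+1)$-st conclusive round starting from $\tau_{k}$, has the same law as the embedded jump chain of the block-counting process of $\Pi^{1}$ started at $n$; both are absorbed at $1$. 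Counting the number of jumps until absorption yields $X_{1}(n)\eqdist T_{0}(n)$, and therefore
\[
X_{1}([\widetilde{E}_{n}(\rho)])-n\ \eqdist\ T_{0}([\widetilde{E}_{n}(\rho)])-n\ \todistr\ \widetilde{T}_{0}^{*}(\rho)
\]
by Corollary~\ref{mainT_{0}_tilde}.

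The main obstacle is the first step: transforming the intensity formula \eqref{merge_{i}ntensities} integrated against ${\rm PD}_{1}$ into the clean combinatorial ratio $|s(m,j)|/(m!-1)$. The difficulty is purely computational and amounts to evaluating, for each pattern $(k_{1},\ldots,k_{j})$, the PD$_{1}$-moment $\int_{\Delta^{*}} \sum_{i_{1},\ldots,i_{j}\text{ distinct}} x_{i_{1}}^{k_{1}}\cdots x_{i_{j}}^{k_{j}}\,{\rm PD}_{1}({\rm d}x)$, then assembling these contributions correctly. The most efficient route is to bypass this calculation altogether by citing the known transition formula for the Poisson--Dirichlet coalescent at $\theta=1$ from \cite{Moehle:10,Sagitov:03}, after which Steps~2 and~3 are short and purely probabilistic.
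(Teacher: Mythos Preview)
Your proposal is correct and follows essentially the same route as the paper: both reduce the theorem to the identity $X_{1}(n)\eqdist T_{0}(n)$ by matching the transition law of the block-counting jump chain of the Poisson--Dirichlet coalescent (taken from \cite{Moehle:10}) with the conditional law $\Prob\{K(m)=j\mid K(m)<m\}$, and then apply Corollary~\ref{mainT_{0}_tilde}. The only cosmetic difference is that the paper packages the final step as a pair of distributional recursions $X_{1}(n)\eqdist\1\{K(n)<n\}+\wh X_{1}(K(n))$ and $T_{0}(n)\eqdist\1\{K(n)<n\}+\wh T_{0}(K(n))$ and concludes by induction on $n$, whereas you phrase it as identifying the two Markov chains directly; these are equivalent.
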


The proof is based on Theorem \ref{mainT_{0}} on the number $T_{0}(n)$ of conclusive rounds in the leader-election model, the connection following from the fact, to be established in Subsection \ref{subsec:PD-theorem-T_0},  that $X_{1}(n)$ and $T_{0}(n)$ have the same distribution. Since the $\widetilde{T}_{0}^{*}(\rho)$, $\rho\in\R$, have pairwise distinct laws (Prop.~\ref{prop:T_0_ast_properties}), it is clear that the family $X_{1}(M) - \log^{*}M$, $M\in\N$, cannot be convergent in distribution. On the other hand, our last result shows that this family is tight.

\begin{Theorem}\label{PD-theorem_tight}
The sequence $(X_{1}(M) - \log^{*} M)_{M\in\N}$ is tight (and in fact even bounded in $L^{r}$ for every $r>0$), but it does not converge in distribution.
\end{Theorem}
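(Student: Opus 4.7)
The plan is to reduce the statement to the analogous one for the leader-election quantity $T_0(M)$ and then combine the already-established result for $T(M)$ (Theorem~\ref{mainT0}) with an independent moment estimate for the number of inconclusive rounds.

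\emph{Reduction.} By the distributional identity $X_1(M) \eqdist T_0(M)$ that will be established in Subsection~\ref{subsec:PD-theorem-T_0}, it suffices to verify tightness, $L^r$-boundedness, and non-convergence for the sequence $(T_0(M) - \log^{*}M)_{M\in\N}$.

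\emph{Tightness and $L^r$-boundedness.} I would decompose
$$
T_0(M) - \log^{*}M \;=\; \bigl(T(M) - \log^{*}M\bigr) \,-\, W_M,\qquad W_M := T(M) - T_0(M),
$$
so that the upper tail is directly controlled by Theorem~\ref{mainT0}. It therefore suffices to establish a uniform-in-$M$ $L^r$-bound for the number $W_M$ of inconclusive rounds. By R\'enyi's theorem, whenever the chain $(N^{(n)}_M)_{n\ge 0}$ sits in a state $k \ge 2$, the probability that none of the $k$ remaining players is eliminated in the next round equals $\prod_{i=1}^{k}\tfrac{1}{i} = 1/k!$. Hence the sojourn time $\tau_k$ at state $k$ (set to $0$ if that state is never visited) is, conditionally on being visited, $\operatorname{Geom}(1-1/k!)$-distributed starting at $1$; by the strong Markov property a standard coupling produces independent $\operatorname{Geom}(1-1/k!)$ variables $\tau_k'$, $k\ge 2$, with
$$
W_M \;\le\; \sum_{k=2}^{\infty}(\tau_k' - 1)\quad\text{a.s.}
$$
The moment generating function $\Erw[e^{t(\tau_k'-1)}] = (1-1/k!)/(1-e^t/k!)$ satisfies $\log \Erw[e^{t(\tau_k'-1)}] = O(1/k!)$ as $k\to\infty$ for any fixed $t>0$, so the dominating sum has a finite MGF in a neighbourhood of the origin and hence finite moments of all orders. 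Combining this uniform bound on $W_M$ with the $L^r$-boundedness of $T(M)-\log^{*}M$ yields the desired two-sided $L^r$-bound on $T_0(M)-\log^{*}M$.

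\emph{Non-convergence.} Here I would invoke Corollary~\ref{mainT_{0}_tilde}, which asserts that $T_0([\widetilde E_n(\rho)]) - n \todistr \widetilde T_0^{*}(\rho)$ for every $\rho\in\R$, with the limiting random variables $\widetilde T_0^{*}(\rho)$ having pairwise distinct distributions. A short check based on $\widetilde E_n(0)=e\uparrow\uparrow(n-1)$, $\widetilde E_n(1)=e\uparrow\uparrow n$, and the fact that the gap $\widetilde E_n(\rho) - e\uparrow\uparrow(n-1)$ diverges for any fixed $\rho\in(0,1)$, shows that $\log^{*}[\widetilde E_n(\rho)] = n$ for all sufficiently large $n$. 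Consequently,
$$
T_0\bigl([\widetilde E_n(\rho)]\bigr) - \log^{*}[\widetilde E_n(\rho)] \;\todistr\; \widetilde T_0^{*}(\rho),
$$
and picking two distinct $\rho_1,\rho_2 \in (0,1)$ exhibits two subsequences of $(T_0(M)-\log^{*}M)_{M\in\N}$ with distinct distributional limits, ruling out convergence in distribution. The conclusion for $X_1(M)-\log^{*}M$ then follows via the reduction.

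The main technical step is the uniform moment bound on $W_M$; the rest is a careful but essentially routine assembly of results already available in Section~\ref{sec:results}.
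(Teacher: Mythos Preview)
Your proposal is correct and follows essentially the same approach as the paper: the same reduction via $X_1(M)\eqdist T_0(M)$, the same decomposition $T_0(M)=T(M)-T_1(M)$ (your $W_M$ is the paper's $T_1(M)$), the same stochastic domination of the inconclusive-round count by $\sum_{k\ge 2}(G_k-1)$ with independent $G_k\sim\operatorname{Geom}(1-1/k!)$, and the same non-convergence argument via distinct subsequential limits. The only cosmetic differences are that you verify finite moments of the dominating sum via an MGF bound whereas the paper uses Minkowski's inequality, and that you spell out the check $\log^{*}[\widetilde E_n(\rho)]=n$ for $\rho\in(0,1)$ explicitly rather than deferring to Proposition~\ref{prop:T_0_ast_properties}.
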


Although the previous results are stated only for the Poisson-Dirichlet coalescent with parameter $\theta=1$, we point out that they can easily be extended to the case of general $\theta>0$. Let us briefly sketch the corresponding construction. Fix $\theta>0$ and consider a $\theta$-modified leader-election procedure where, instead of \eqref{xi_distribution}, the positions of remaining players are determined by the independent random variables
$$
\xi_{i}^{(n)}\ :=\ \1\{\text{in round }n\text{ player with the current number }i\text{ survives}\}
$$
with distribution
$$
\Prob\{\xi_{i}^{(n)}=1\}\ =\ 1-\Prob\{\xi_{i}^{(n)}=0\}\ =\ \frac{\theta}{i+\theta-1}.
$$
for $i,\,n\in\N$. Using exactly the same arguments as in the proof of Theorem \ref{PD-theorem}, in particular formula \eqref{PD_dec_distr}, it can be easily checked that the random variable $X_{\theta}(n)$ has the same distribution as the number of conclusive rounds in the $\theta$-modified leader-election procedure starting with $n$ players. All the results of Section \ref{sec:results} can be extended to the $\theta$-modified leader-election procedure with appropriate adjustments. For example one has to replace tetrations
$E_n$ and iterated logarithms $L_n$ by their $\theta$-modified analogs:
\begin{align*}
{E}^{\theta}_{0}(\rho)\,&:=\,\rho\quad\text{and}\quad{E}^{\theta}_{n}(\rho)\,:=\,e^{\theta^{-1}({E}^{\theta}_{n-1}(\rho)-1)}\quad\text{for }n\in\N;\\
{L}^{\theta}_{0}(\rho)\,&:=\,\rho\quad\text{and}\quad{L}^{\theta}_{n}(\rho)\,:=\,1+\theta\log {L}^{\theta}_{n-1}(\rho)\quad\text{for }n\in\N;\\
{\widetilde{E}}^{\theta}_{0}(\rho)\,&:=\,\rho\quad\text{and}\quad{\widetilde{E}}^{\theta}_{n}(\rho)\,:=\,e^{\theta^{-1}{\widetilde{E}}^{\theta}_{n-1}(\rho)}\quad\text{for }n\in\N;	
\end{align*}
and, as suggested by \eqref{wlln_pd_coal}, $\log^{\ast}$ by $\log_{\theta}^{*}$.

\vspace{.1cm}
We close this section with some further remarks. The quite exotic asymptotic behavior of $X_n$ involving iterated logarithms and tetrations is not very common
in the probabilistic literature, yet it has appeared in several places in population genetics and coalescent theory. In particular, the ``log-star'' asymptotics arise in the analysis of the number of distinguishable alleles in the Ohta--Kimura model of neutral mutation \cite{Kesten:80} and in the analysis of the spatial Kingman coalescent \cite{Angel+Berestycki+Limic:12}. Outside Mathematical Biology the ``log-star'' function has appeared in some problems related to the complexity analysis of computer algorithms \cite{Devillers:92,Linial:92,Schneider+Wattenhofer:08}. Let us finally mention a recent preprint \cite{AlsKabMar:16} where we have studied another generalization of the classical leader-election procedure.

\section{The proofs ...}\label{sec:proofs}

Since iterations of functions will play a role in various places of this section, let us introduce the following shorthand notation: Given a sequence of self-maps $\vph^{(n)}$, $n\in\Z\text{ or }\in\N$, of an arbitrary set $\cS$, we put
$$ \vph^{(k\uparrow n)}(\cdot):=\vph^{(k)}\circ\ldots\circ\vph^{(n)}(\cdot)\quad\text{and}\quad \vph^{(n\downarrow k)}(\cdot):=\vph^{(n)}\circ\ldots\circ\vph^{(k)}(\cdot) $$
for $k\leq n$. For $k<n$, we stipulate that $\vph^{(n\uparrow k)}$ and $\vph^{(k\downarrow n)}$ denote the identity map on $\cS$ if $k<n$. 

\subsection{... of Theorem~\ref{main1}}\label{subsec:main1}
We start with the proof of Theorem \ref{main1}, which is the basic convergence result in this model.

\vspace{.2cm}
For $k,m,n\in\N$, define $K^{(n)}(m)$ as the number of records among the first $m$ players still in the game in round $n$ and $\nu^{(n)}(k)$ as the index of the $k$-th record in round $n$, formally
$$ K^{(n)}(m)\,:=\,\sum_{j=1}^{m}\xi_{j}^{(n)}\quad\text{and}\quad\nu^{(n)}(k)\,:=\,\inf\{j\in\N:K^{(n)}(j) = k\}. $$
Observe that
$$ (S^{(n)}_{1},S^{(n)}_{2},S^{(n)}_{3},\ldots)\ =\ (S^{(n-1)}_{\nu^{(n)}(1)},S^{(n-1)}_{\nu^{(n)}(2)},S^{(n-1)}_{\nu^{(n)}(3)},\ldots),\quad n\in\N. $$
Iterating this relation and using the initial condition $S_j^{(0)}=j$, we obtain
\begin{equation*}
S^{(n)}\ :=\ \big(S^{(n)}_{1},S^{(n)}_{2},\ldots\big)\ =\ \big(\nu^{(1\uarr n)}(1),\nu^{(1\uarr n)}(2),\ldots\big).
\end{equation*}
This shows that the $(S^{(n)}_{1},S^{(n)}_{2},S^{(n)}_{3},\ldots)$ forms an \emph{iterated function system}, obtained by the \textit{forward} iteration of iid copies $\phi^{(1)},\phi^{(2)},\ldots$ of the random mapping
$$ \phi:\ (x_{1},x_{2},x_{3},\ldots)\ \mapsto\ x\circ\nu\ =\ (x_{\nu(1)},x_{\nu(2)},x_{\nu(3)},\ldots) $$
with initial condition $(S^{(0)}_{1},S^{(0)}_{2},S^{(0)}_{3},\ldots)=(1,2,3,\ldots)$, thus
$$
S^{(n)}\ =\ \phi^{(n\darr 1)}(S^{(0)}). 
$$
for $n\in\N$. A standard tool for such systems is to pass to the \emph{backward} iterations. This transformation does not change the distributions, hence
\begin{equation}\label{eq:forward_backward_eqdistr}
S^{(n)}\
\eqdist\
\phi^{(1\uarr n)}(S^{(0)})
\ =\
\big(\nu^{(n\darr 1)}(1),\nu^{(n\darr 1)}(2),\ldots\big)
\end{equation}
for every $n\in\N$.
The advantage of this passage lies in the fact that upon applying the mapping ${L}_{n}$ to the coordinates on the right-hand side of~\eqref{eq:forward_backward_eqdistr}
we obtain an \textit{almost surely} convergent sequence, while doing the same with the forward iterations gives convergence in distribution only. 
For an excellent general survey of iterated function systems and applications, we refer to Diaconis and Freedman \cite{DiaconisFr:99}. Define
\begin{equation}\label{eq:def_eta}
\eta_{j}^{(0)}\,:=\,j\quad\text{and}\quad\eta_{j}^{(n)}\,:=\,\nu^{(n\darr 1)}(j)\quad\text{for }j,n\in\N.
\end{equation}
Our aim is to show that, for every fixed $j\in\N$, the sequence
\begin{equation}\label{eq:def_X}
\cX_{n,\,j}\,:=\,{L}_{n}(\eta_{j}^{(n)}), \quad n\in\N_{0},
\end{equation}
converges a.s.,\ as $n\to\infty$.  To this end, it suffices to show that the sequence $\cX_{n,\,j}$ has almost surely bounded variation for arbitrary fixed $j\in\N$, i.e.\
\begin{equation}\label{X_bv}
\sum_{n=1}^{\infty}|\cX_{n,\,j}-\cX_{n-1,\,j}|<\infty \quad \text{a.s.}
\end{equation}
This being obvious for $j=1$ because $\cX_{n,\,1}=1$ for all $n\in\N_{0}$, let $j\in \{2,3,\ldots\}$ and note that, by Prop.~\ref{nu_{n}_prop} in the Appendix,
$$ \sup_{m\in\N} \Erw\left(\frac{m}{{L}_{1}(\nu(m))}\right)^{r}\,<\,C_{r}\quad\text{and}\quad
\sup_{m\in\N} \Erw \left|\frac{{L}_{1}(\nu(m))-m}{\sqrt{m}}\right|^{r}\,<\,C_{r} $$
for arbitrary $r>0$ and some finite constant $C_{r}$ depending only on $r$.
Since $\eta_{j}^{(n-1)}$ is independent of $\nu^{(n)}$ and since $\eta_j^{(n)} = \nu^{(n)}(\eta_j^{(n-1)})$, see~\eqref{eq:def_eta}, we conclude\footnote{If $(X_{n})_{n\in\N}$ is $L^{r}$-bounded by a constant $c$ and $(Y_{\alpha})_{\alpha\in A}$ a family of positive, integer-valued random variables independent of $(X_{n})_{n\in\N}$, then $(X_{Y_{\alpha}})_{\alpha\in A}$ is also $L^{r}$-bounded by $c$, for
$$\Erw |X_{Y_{\alpha}}|^{r} = \sum_{n=1}^\infty \Erw (|X_{Y_{\alpha}}|^{r}|Y_{\alpha}=n)\,\Prob\{Y_{\alpha}=n\} =\sum_{n=1}^\infty \Erw|X_n|^{r}\,\Prob\{Y_{\alpha} = n\} \leq c^{r} \sum_{n=1}^\infty \Prob\{Y_{\alpha} = n\}= c^{r}.$$} that
\begin{equation}\label{eq:moments_bound}
\sup_{n\in\N} \Erw \left(\frac{\eta_{j}^{(n-1)}}{{L}_{1}(\eta_{j}^{(n)})}\right)^{r}\,<\,C_{r}
\quad\text{and}\quad
\sup_{n\in\N} \Erw \left|\frac{{L}_{1}(\eta_{j}^{(n)})-\eta_{j}^{(n-1)}}{\sqrt{\eta_{j}^{(n-1)}}}\right|^{r}\,<\,C_{r}.
\end{equation}
Based on these observations and by making use of the mean-value theorem for differentiable functions, we find
\begin{align*}
\Erw |\cX_{n,\,j}-\cX_{n-1,\,j}|\
&=\ \Erw \Big|{L}_{n}(\eta_{j}^{(n)})-{L}_{n-1}(\eta_{j}^{(n-1)})\Big|\\
&=\ \Erw \Big|{L}_{n-1}\Big({L}_{1}(\eta_{j}^{(n)})\Big)-{L}_{n-1}(\eta_{j}^{(n-1)})\Big|\\
&=\ \Erw \left({L}_{n-1}'(\theta_{n}(j))\Big|{L}_{1}(\eta_{j}^{(n)})-\eta_{j}^{(n-1)}\Big|\right)
\end{align*}
for some
$$ \theta_{n}(j)\in \left[{L}_{1}(\eta_{j}^{(n)})\wedge\eta_{j}^{(n-1)},\, {L}_{1}(\eta_{j}^{(n)})\vee \eta_{j}^{(n-1)}\right]. $$
For $n\ge 2$ and $x\in [1,\infty)$, we have
\begin{equation}\label{eq:L_derivative_est}
0 < {L}_{n-1}'(x)\le \frac{1}{x}
\end{equation}
and then by the Cauchy-Schwarz inequality,
\begin{equation*}
\Erw |\cX_{n,\,j}-\cX_{n-1,\,j}|\ \le\ \left\|\frac{\sqrt{\eta_{j}^{(n-1)}}}{\theta_{n}(j)}\right\|_{2}
\left\|\frac{{L}_{1}(\eta_{j}^{(n)})-\eta_{j}^{(n-1)}} {\sqrt{\eta_{j}^{(n-1)}}}\right\|_{2}
\end{equation*}
The second term on the right-hand side is bounded by an appeal to \eqref{eq:moments_bound}, while $\Erw\left(\eta_{j}^{(n-1)}\theta_{n}^{-2}(j)\right)$ can be further bounded from above by
\begin{align*}
\Erw&\left(\left(\eta_{j}^{(n-1)}\right)^{-1}\1_{\left\{{L}_{1}(\eta_{j}^{(n)})>\eta_{j}^{(n-1)}\right\}}
\ +\ \frac{\eta_{j}^{(n-1)}}{{L}^2_{1}(\eta_{j}^{(n)})}\1_{\left\{{L}_{1}(\eta_{j}^{(n)})\le \eta_{j}^{(n-1)}\right\}}\right)
\\
&\le\ \Erw\left(\eta_{j}^{(n-1)}\right)^{-1}\
+\ \Erw \left(\frac{1}{\eta_{j}^{(n-1)}}\left(\frac{\eta_{j}^{(n-1)}}{{L}_{1}(\eta_{j}^{(n)})}\right)^2\right)\\
&\le\ \Erw \left(\eta_{j}^{(n-1)}\right)^{-1}
\ +\ \left\|\frac{1}{\eta_{j}^{(n-1)}}\right\|_{2}\left\|\frac{\eta_{j}^{(n-1)}}{{L}_{1}(\eta_{j}^{(n)})}\right\|_4^{2}.
\end{align*}
Now the second factor in the second summand is bounded by an absolute constant; see again \eqref{eq:moments_bound}. By combining the previous estimates and using that $\eta_{j}^{(n-1)}\ge 1$, we finally arrive at
\begin{equation}\label{eq:E_X_{n}_j_X_{n}-1_j}
\Erw |\cX_{n,\,j}-\cX_{n-1,\,j}|\le C \left(\Erw \left(\frac{1}{\eta_{j}^{(n-1)}}\right)\right)^{1/4},
\quad n\ge 2,
\end{equation}
where $C$ is an absolute constant (not depending on $n$ and $j$). For $n=1$, this inequality is not valid because \eqref{eq:L_derivative_est} breaks down in this case. Instead, we then have $L_{0}'(x) = 1$ and
\begin{equation}\label{eq:E_X_{n}_j_X_{n}-1_j_special_case}
\Erw |\cX_{1,\,j}-\cX_{0,\,j}|\ =\ \Erw \left|{L}_{1}(\eta_{j}^{(1)})-j\right|\ \le\
\sqrt j \left\|\frac{{L}_{1}(\eta_{j}^{(1)})-j} {\sqrt{j}}\right\|_{2}\le\ C \sqrt j,
\end{equation}
byan appeal to the Cauchy-Schwarz inequality and then~\eqref{eq:moments_bound}. We will need~\eqref{eq:E_X_{n}_j_X_{n}-1_j_special_case} in the proof of Theorem~\ref{SLLN/CLT_S_{n}} below.

\vspace{.1cm}
Since $j\ge 2$, we have $\eta_{j}^{(n-1)}\ge 2$ for all $n\in\N$. Recalling from \eqref{eq:def_eta} that $\eta_{j}^{(n)} = \nu^{(n)}(\eta_{j}^{(n-1)})$ and applying Lemma~\ref{expectation_lemma} from the Appendix, we obtain that
\begin{align*}
\Erw\left(\eta_{j}^{(n)}\right)^{-1}\ &=\ \sum_{k=2}^{\infty} \Prob\{\eta_{j}^{(n-1)} = k\}\,\Erw\left(\nu^{(n)}(k)\right)^{-1}\\
&\le\ \frac{3}{4} \sum_{k=2}^{\infty} \frac{1}{k}\,\Prob\{\eta_{j}^{(n-1)} = k\}\
=\ \frac{3}{4}\,\Erw\left(\eta_{j}^{(n-1)}\right)^{-1}
\end{align*}
for all $n\in\N$. Using this inductively together with $\eta_j^{(0)} = j$, we get
$$ \Erw\left(\eta_{j}^{(n)}\right)^{-1}\ \le\ \left(\frac34\right)^{n} \frac 1j $$
for $n\in\N_{0}$ and $j\ge 2$. Therefore, recalling \eqref{eq:E_X_{n}_j_X_{n}-1_j},
\begin{align}
\begin{split}\label{eq:sum_expect_diff_X_{n}_j}
\sum_{n=2}^{\infty}\Erw |\cX_{n,\,j}-\cX_{n-1,\,j}|\ &\le\
C \sum_{n=2}^{\infty}\left(\Erw\left(\eta_{j}^{(n-1)}\right)^{-1}\right)^{1/4}\\
&\le\ \frac C{j^{1/4}} \sum_{n=1}^{\infty} \left(\frac34\right)^{n/4}\ \le\ \frac{C'}{j^{1/4}},
\end{split}
\end{align}
where $C'$ is an absolute constant. This completes the proof of~\eqref{X_bv}.

\vspace{.1cm}
The random mapping $\psi$ defined by \eqref{psi_mapping_def} is almost surely continuous with respect to the product topology on $\R^{\N}$, hence the limit vector satisfies \eqref{fp_limit_S}. This completes the proof of Theorem \ref{main1}.\qed

\subsection{... of Theorems \ref{SLLN/CLT_S_{n}}, \ref{limit_poisson}, \ref{main1_time} and Proposition \ref{prop:no_atoms}}

\begin{proof}[of Theorem~\ref{SLLN/CLT_S_{n}}]
We first prove that $k^{-1}S_{k}^{*}\to 1$ a.s. In the proof of Theorem~\ref{main1}, we have shown that, for each fixed $k\ge 2$, the sequence of random variables $(\cX_{n,k})_{n\in\N_{0}}$ converges a.s.\ to a random variable here denoted by $\cX_{k}^{*}$. Since $(\cX_k^{*})_{k\in \N}$ has the same distribution as $(S_k^{*})_{k\in\N}$, it suffices to prove that 
\begin{equation}\label{eq:LLN_restated_{1}}
k^{-1}\cX_k^{*}\ \to\ 1\quad\text{a.s.}
\end{equation}
Recall from~\eqref{eq:def_X} that $\cX_{1,k}= {L}_{1} (\nu^{(1)}(k))$.  By the strong law of large numbers for record times, see~\cite{Renyi:62} or~\eqref{eq:lln_clt_{n}u_{n}} in the Appendix, we have
\begin{equation}\label{eq:LLN_record_times}
k^{-1}{L}_{1}(\nu^{(1)}(k))\ \to\ 1\quad\text{a.s.}
\end{equation}
Observe that
\begin{align}
\label{eq:E_X_minus_L_{n}u}
\Erw\left|\cX^{*}_k -  {L}_{1} (\nu^{(1)}(k))\right|\ &=\ \Erw\left|\sum_{n=2}^{\infty}(\cX_{n,k} - \cX_{n-1,k})\right|\ \le\ \sum_{n=2}^{\infty} \Erw\left|\cX_{n,k} - \cX_{n-1,k}\right|\ \le\
\frac{C'}{k^{1/4}},
\end{align}
where we have utilized~\eqref{eq:sum_expect_diff_X_{n}_j} for the last step. By Markov's inequality it follows that, for every $\eps>0$,
$$ \Prob\left\{\left|\frac{\cX_k^{*}}{k}-\frac{{L}_{1}(\nu^{(1)}(k))}{k}\right|>\eps \right\}
\ \le\ \frac{1}{\eps}\,\Erw\left|\frac{\cX_k^{*}- {L}_{1} (\nu^{(1)}(k))}{k}\right|\
\le\\ \frac{C'}{\eps k^{5/4}}, $$
and since the last term is summable over $k$, the Borel-Cantelli lemma together with~\eqref{eq:LLN_record_times} implies~\eqref{eq:LLN_restated_{1}}.

\vspace{.1cm}
To derive the asymptotic formula for the expectation, namely $\Erw S_k^{*} \sim k$, observe that
\begin{align}
\begin{split}\label{eq:E_X_minus_k}
\Erw\left|\cX_{k}^{*}- k\right|\ &=\ \Erw\left|\sum_{n=1}^{\infty}(\cX_{n,k} - \cX_{n-1,k})\right|\\
&\le\ \sum_{n=1}^{\infty} \Erw\left|\cX_{n,k} - \cX_{n-1,k}\right|\ \le\
C\sqrt{k} + \frac{C'}{k^{1/4}}\ \le\ C''\sqrt k,
\end{split}
\end{align}
where \eqref{eq:E_X_{n}_j_X_{n}-1_j_special_case} and~\eqref{eq:sum_expect_diff_X_{n}_j} have been used.

\vspace{.1cm}
Finally turning to the central limit theorem, it suffices again to show the assertion for the sequence $(\cX_{k}^{*})_{k\in\N_0}$, thus
\begin{equation}\label{eq:CLT_restate}
k^{-1/2}\big(\cX_k^{*}-k\big)\ \todistr\ \text{N}(0,1).
\end{equation}
The central limit theorem for record times, see \cite[page 63]{Nevzorov:01} or~\eqref{eq:lln_clt_{n}u_{n}} in the Appendix, states that
\begin{equation}\label{eq:CLT_proof1}
k^{-1/2}\big(L_{1} (\nu^{(1)}(k))-k\big)\ \todistr\ \text{N}(0,1),
\end{equation}
and from \eqref{eq:E_X_minus_L_{n}u}, we know that
\begin{equation}\label{eq:CLT_proof2}
\Erw\left|\frac{\cX_k^{*}-k}{\sqrt k}-\frac{L_{1}(\nu^{(1)}(k))-k}{\sqrt k}\right|\ =\
\frac{1}{\sqrt{k}}\Erw\left|\cX_{k}^{*}-L_{1}(\nu^{(1)}(k))\right|\ \le\ \frac{C'}{k^{3/4}}.
\end{equation}
The combination of \eqref{eq:CLT_proof1} and~\eqref{eq:CLT_proof2} gives \eqref{eq:CLT_restate}.\qed
\end{proof}

\begin{proof}[of Theorem~\ref{limit_poisson}]
Fix $m\in\N$. Note that $e^{\cE_{1}}, \ldots, e^{\cE_{m}}$ are Pareto-distributed with survival function $\Prob\{e^{\cE_{i}}>u\} = 1/u$ for $u>1$. It is known, see~\cite[page 64]{Nevzorov:01} and~\cite[Theorem 5]{Shorrock:72},  that
$$ \left(\frac {\nu^{(1)}(k+l)}{\nu^{(1)}(k+l-1)}\right)_{l=1,\ldots,m}\ \todistr\ (e^{\cE_l})_{l=1,\ldots,m}\quad\text{as }k\to\infty. $$
By the continuous mapping theorem, it follows that, as $k\to\infty$,
\begin{align*}
\Big(L_{1} (\nu^{(1)}(k+l)) &- L_{1} (\nu^{(1)}(k+l-1)) \Big)_{l=1,\ldots,m}\\
&=\ \left(\log  \frac {\nu^{(1)}(k+l)}{\nu^{(1)}(k+l-1)}\right)_{l=1,\ldots,m}
\ \todistr\ (\cE_l)_{l=1,\ldots,m}.
\end{align*}
Moreover, we know from~\eqref{eq:E_X_minus_L_{n}u} that
$$ \Erw\left|(\cX_{k+l}^{*}-\cX_{k+l-1}^{*}) - (L_{1} (\nu^{(1)}(k+l)) - L_{1} (\nu^{(1)}(k+l-1))) \right|\ \le\ \frac {2C'}{k^{1/4}}. $$
Therefore, as $k\to\infty$,
$$ (\cX_{k+l}^{*}-\cX_{k+l-1}^{*})_{l=1,\ldots,m}\ \todistr\ (\cE_l)_{l=1,\ldots,m}, $$
which completes the proof of \eqref{eq:conv S_j^*} because $(\cX_{k+l}^{*}-\cX_{k+l-1}^{*})_{l=1,\ldots,m}$ has the same law as $(S_{k+l}^{*}-S_{k+l-1}^{*})_{l=1,\ldots,m}$.\qed
\end{proof}

\begin{proof}[of Prop.~\ref{prop:no_atoms}]
We must show that $S_{k}^{*}$ has no atoms for all $k\ge 2$. Assuming the contrary, let $a$ be an atom of $S_k^{*}$ having maximal mass, thus $p:=\Prob\{S_{k}^{*}=a\}>0$ and $\Prob\{S_{k}^{*}=b\}\le p$ for all $b\in\R$. The stochastic fixed-point equation \eqref{fp_limit_S} then gives
\begin{align*}
p\ &=\ \Prob\{S_k^{*}=a\}\ =\ \Prob\{ S_{\nu(k)}^{*}=E_{1}(a)\}\\
&= \Prob\{\nu(k)= k\}\,\Prob\{ S_k^{*}=E_{1}(a)\}\ +\ \sum_{l=k+1}^{\infty}\,\Prob\{\nu(k)=l\}\,\Prob\{ S_l^{*}=E_{1}(a)\}.
\end{align*}
Since $\Prob\{ S_k^{*} =  E_{1}(a)\}\le p$ and $\Prob\{\nu(k)=k\} = 1/k!<1$, we infer that $ S_{l}^{*}$, for at least one $l\ge k+1$, satisfies $\Prob\{ S_l^{*}= E_{1}(a)\}\ge p$ and thus has also an atom of size at least $p$. Repeating this argument indefinitely, we obtain a sequence $k < k_{1} < k_{2} < \ldots$ such that each $S_{k_{i}}^{*}$ has an atom of size at least $p$. As a consequence, the random variables
$$ W_{i}\ :=\ k_{i}^{-1/2}\big(S_{k_{i}}^{*}-k_{i}\big),\quad i\in\N, $$
have this property as well. On the other hand, we know by Theorem~\ref{SLLN/CLT_S_{n}} that $W_{i}$ converges in distribution to a standard normal random variable. By Lemma~\ref{lem:no_atoms_weak_conv} below, this is a contradiction.

\vspace{.1cm}
Next, we must show \eqref{eq:all values attained} or, equivalently,
$$ p\,:=\,\Prob\{\cX_{2}^{*} \in [\alpha_{2},\beta_{2}], \ldots,\cX_{m}^{*} \in [\alpha_{m},\beta_{m}]\}\,>\,0 $$
for any $1<\alpha_{2}<\beta_{2}<\ldots<\alpha_{m}<\beta_{m}$, for $(S_{2}^{*}, \ldots, S_m^{*})\eqdist(\cX_{2}^{*}, \ldots, \cX_m^{*})$. Fix $k\in\N$ so large that, for each $i=2,\ldots,m$, the interval $(E_k(\alpha_{i}),E_k(\beta_{i}))$ contains an integer $t_{i}>\max\{i, (2C''m)^2\}$ (with $C''$ as in~\eqref{eq:E_X_minus_k}) such that
\begin{equation}\label{eq:wspom_t_{i}}
E_k(\alpha_{i})<\frac{t_{i}}{2} < t_{i} <\frac{3t_{i}}{2} < E_k(\beta_{i}).
\end{equation}
The existence of such $t_{i}$ follows from  $\lim_{k\to\infty}E_k(\beta_{i})/E_k(\alpha_{i})=\infty$. One can take, for instance, $t_{i}:=[E_k(\beta_{i})/2]$. Recalling that $\cX_j^{*}$ is the almost sure limit of $\cX_{n,j}=L_{n}(\eta_{j}^{(n)})$ as $n\to\infty$, where $\eta_{j}^{(n)}=\nu^{(n\darr 1)}(j)$, and using the independence of $\nu^{(1)},\ldots,\nu^{(n)}$, we obtain
\begin{align*}
p\ > ~&\Prob\{\eta_{2}^{(k)} = t_{2}, \ldots, \eta_m^{(k)} = t_m\}\\
&\times\ \Prob\left\{ \lim_{n\to\infty} L_{n} (\nu^{(n\darr k+1)}(t_{i})) \in [\alpha_{i},\beta_{i}] \text{ for all } i=2,\ldots,m\right\}.
\end{align*}
It suffices to show that both probabilities on the right-hand side, say $p'$ and $p''$, are strictly positive. As for $p'$, this follows from
$$ p'\ \ge\ \Prob\{\nu^{(1)}(i)=\ldots=\nu^{(k-1)}(i)=i,\,\nu^{(k)} (i) = t_{i}\text{ for }i=2,\ldots,m\}\ >\ 0. $$
Regarding $p''$, we infer with the help of \eqref{eq:wspom_t_{i}}
\begin{align*}
p''\ &=\ \Prob\{ \cX_{t_{i}}^{*}  \in [E_{k}(\alpha_{i}),E_{k}(\beta_{i})] \text{ for } i=2,\ldots,m\}\\
&\ge\ \Prob\left\{ \frac{t_{i}}2 \le  \cX_{t_{i}}^{*} \leq \frac{3t_{i}}{2} \text{ for all } i=2,\ldots,m\right\},
\end{align*}
and Markov's inequality in combination with \eqref{eq:E_X_minus_k} provides us with
$$ \Prob \left\{\left|\cX_{t_{i}}^{*}- t_{i}\right|\ >\ \frac{t_{i}}2 \right\}\ \le\
\frac{2}{t_{i}}\,\Erw \left|\cX_{t_{i}}^{*}- t_{i}\right|\ \le\ \frac{2C''}{\sqrt{t_{i}}}\ <\ \frac{1}{m}. $$
Consequently,
$$ p''\ >\ 1-\sum_{i=2}^{m} \Prob \left\{\left|\cX_{t_{i}}^{*}- t_{i}\right|\ >\ \frac{t_{i}}2 \right\}\ >\  1-\frac{m-1}{m}\ >\ 0, $$
which completes the proof.\qed
\end{proof}

\begin{Lemma}\label{lem:no_atoms_weak_conv}
Let $W_{1},W_{2},\ldots$ be random variables such that $\Prob\{W_{n}=a_{n}\}\ge p>0$ for some $a_{1},a_{2},\ldots\in\R$ and all $n\in\N$. Then $W_{n}\todistr W$ implies that $W$ has also an atom.
\end{Lemma}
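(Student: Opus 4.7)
The plan is to show first that the sequence of atoms $(a_n)$ must be bounded, then to extract a convergent subsequence along which the atom mass transfers to the limiting distribution via the closed-set form of the Portmanteau theorem.

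The first step relies on tightness. Since $W_n \todistr W$ with $W$ an $\R$-valued random variable, the family $\{W_n : n\in\N\}$ is tight, so one can pick $M>0$ with $\Prob\{|W_n|>M\}<p$ for every $n\in\N$. If $|a_n|>M$ were to hold for some $n$, the inclusion $\{W_n=a_n\}\subset\{|W_n|>M\}$ would force $p \le \Prob\{W_n=a_n\} \le \Prob\{|W_n|>M\} < p$, a contradiction. Hence $(a_n)\subset[-M,M]$, and by Bolzano--Weierstrass there is a subsequence $a_{n_k}\to a\in\R$.

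The second step is to invoke the Portmanteau theorem in its closed-set formulation: for every closed set $F\subset\R$,
$$ \limsup_{k\to\infty}\Prob\{W_{n_k}\in F\}\ \le\ \Prob\{W\in F\}. $$
Applied to $F_\eps:=[a-\eps,a+\eps]$ with arbitrary $\eps>0$, the convergence $a_{n_k}\to a$ ensures $a_{n_k}\in F_\eps$ for all sufficiently large $k$, and hence $\Prob\{W_{n_k}\in F_\eps\}\ge\Prob\{W_{n_k}=a_{n_k}\}\ge p$. Combining with the Portmanteau inequality yields $\Prob\{W\in F_\eps\}\ge p$. Letting $\eps\darr 0$ and using continuity of probability along the decreasing sequence $F_\eps\darr\{a\}$ gives $\Prob\{W=a\}\ge p$, so $a$ is an atom of $W$.

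There is no real obstacle to this plan; the only points requiring care are to use the Portmanteau inequality in the correct direction (upper bound on the $\limsup$ over closed sets, not the $\liminf$ over open sets) and to observe that convergence in distribution to a genuine $\R$-valued limit supplies the tightness needed to preclude $|a_n|\to\infty$.
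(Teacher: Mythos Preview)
Your proof is correct and follows essentially the same approach as the paper: bound the sequence $(a_n)$, extract a convergent subsequence $a_{n_k}\to a$, and then use Portmanteau on closed intervals $[a-\eps,a+\eps]$ to transfer the mass $p$ to the limit. The only cosmetic differences are that the paper argues by contradiction (assuming $W$ has continuous law) and handles the boundedness of $(a_n)$ by directly ruling out $\limsup a_n=+\infty$ and $\liminf a_n=-\infty$, whereas you invoke tightness; your direct argument via the closed-set Portmanteau inequality is arguably cleaner.
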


\begin{proof}
Suppose that $W_{n}$ converges in distribution to a random variable $W$ with a continuous law. If $\limsup_{n\to\infty}a_{n}=+\infty$, then
$$ \Prob\{W\le x\}\ =\ \lim_{n\to\infty}\Prob\{W_{n}\le x\}\ \le\ \limsup_{n\to\infty}\Prob\{W_{n}<a_{n}\}\ \le\ 1-p, $$
for all $x\in\R$ which is impossible, for $\lim_{x\to+\infty} \Prob\{W\le x\} = 1$. By a similar argument, we can rule out $\liminf_{n\to\infty}a_{n}=-\infty$. So it remains to consider the case when the sequence $(a_{n})$ is bounded, w.l.o.g. (after passing to a subsequence) convergent to some $a\in\R$. Then
$$ \Prob\{W\in [a-\eps,a+\eps]\}\ =\ \lim_{n\to\infty}\Prob\{W_{n}\in [a-\eps, a+\eps]\}\ \ge\ p, $$
for any $\eps>0$, which is again impossible, for $\lim_{\eps\downarrow 0} \Prob\{W\in [a-\eps,a+\eps]\} =0$ by the continuity of the law of $W$.\qed
\end{proof}

\begin{proof}[of Theorem \ref{main1_time}]
Recall that $S^{(n)}=(S^{(n)}_j)_{j\in\N}$. We must show that
\begin{equation}\label{eq:main1_time_statement}
(L_{n+k} (S^{(n+k)}))_{k\in \{-p,\ldots,p\}}\ \todistr\ (V_k)_{k\in\{-p,\ldots,p\}}
\end{equation}
for every $p\in\N$, where $L_{n+k}$ is applied coordinate-wise and we recall that $V_k = (S_{k,j}^{*})_{j\in\N}$. We regard both sides in \eqref{eq:main1_time_statement} as random vectors with components from $\R^{\N}$, the latter space being endowed with the product topology. Recall that $(\psi^{(j)})_{j\in\Z}$ is a family of independent copies of the mapping $\psi$ and assume that $S^{(n-p)}$ is independent of this family.   By definition of $\psi$, see~\eqref{psi_mapping_def},
$$ \left(L_{p+k}(S^{(n+k)})\right)_{k\in\{-p,\ldots,p\}}\ \eqdist\ \left(\psi^{(k-1\darr -p)}(S^{(n-p)})\right)_{k\in\{-p,\ldots,p\}}, $$
where we have used that $\psi^{(j)}$ commutes with $L_{1}$ when applied coordinate-wise. Applying the map $L_{n-p}$ to both sides, we obtain
\begin{align*}
\left(L_{n+k}(S^{(n+k)})\right)_{k\in \{-p,\ldots,p\}}\
\eqdist\ \left(\psi^{(k-1 \darr -p)} (L_{n-p}(S^{(n-p)}))\right)_{k\in \{-p,\ldots,p\}},
\end{align*}
and we know from Theorem~\ref{main1} that $L_{n-p}(S^{(n-p)})\idist V_{-p}$ as $n\to\infty$.
By the continuous mapping theorem,
$$ \left(L_{n+k}(S^{(n+k)})\right)_{k\in \{-p,\ldots,p\}}\ \todistr\ (\psi^{(k-1 \darr -p)} (V_{-p}))_{k\in \{-p,\ldots,p\}}\ =\ (V_{k})_{k\in \{-p,\ldots,p\}}, $$
thus completing the proof of \eqref{eq:main1_time_statement} and Theorem \ref{main1_time}.\qed
\end{proof}

\subsection{... of Theorems~\ref{main2} and \ref{main2_time}}

\begin{proof}[of Theorem \ref{main2}]
Our argument is based on the simple duality relation
\begin{equation}\label{duality_S_N}
\{N^{(n)}_{M}\ge k\}\ =\ \{S_{k}^{(n)}\le M\},\quad n,k,M\in\N.
\end{equation}
Fix $m\in\N$, $1\le \rho_{1}<\ldots<\rho_m$, $k_{1},\ldots,k_m\in\{2,3,\ldots\}$ and write
\begin{align*}
\Prob\{N_{[{E}_{n}(\rho_{i})]}^{(n)}\ge k_{i},\,i=1,\ldots,m\}
&=\ \Prob\{S_{k_{i}}^{(n)} \le [{E}_{n}(\rho_{i})],\,i=1,\ldots,m\}\\
&=\ \Prob\{S_{k_{i}}^{(n)} \le {E}_{n}(\rho_{i}),\,i=1,\ldots,m\}.
\end{align*}
Applying the monotone function ${L}_{n}$ to both sides, we obtain
$$ \Prob\{N_{[{E}_{n}(\rho_{i})]}^{(n)}\ge k_{i},\,i=1,\ldots,m\}\ =\
\Prob\{L_{n}(S_{k_{i}}^{(n)})\le\rho_{i},\,i=1,\ldots,m\}. $$
By Theorem~\ref{main1}, we know that
$$ \left(L_{n}(S^{(n)}_{k_{1}}), \ldots,L_{n}(S^{(n)}_{k_m})\right)\ \todistr\ (S_{k_{1}}^{*},\ldots, S_{k_m}^{*}), $$
and $S_{k_{1}}^{*},\ldots, S_{k_m}^{*}$ have no atoms by Prop.~\ref{prop:no_atoms}. A combination of these facts provides us with
\begin{align*}
\lim_{n\to\infty}\Prob\left\{N_{[{E}_{n}(\rho_{i})]}^{(n)}\ge k_{i},\,i=1,\ldots,m\right\}\ &=\
\Prob\{{S}_{k_{i}}^{*}\le \rho_{i},\,i=1,\ldots,m\}\\
&=\ \Prob\{{N}^{*}(\rho_{i})\ge k_{i},\,i=1,\ldots,m\},
\end{align*}
where \eqref{N_ast_def} should be recalled for the last equality.  This proves \eqref{eq:N_{n}_conv_fdd}.  To prove the stochastic fixed point equation~\eqref{eq:fixed_point_N_star}, observe that by \eqref{fp_limit_S},
\begin{align*}
{N}^{*}(1+\log \rho)\ &=\ \#\{k\in\N\colon S^{*}_k\le {L}_{1}(\rho)\}\\
&\eqdist\ \#\{k\in\N\colon {L}_{1}(S^{*}_{\nu(k)})\le {L}_{1}(\rho)\}\\
&=\ \#\{k\in\N\colon S^{*}_{\nu(k)}\le \rho\}\ =\ K({N}^{*}(\rho)),
\end{align*}
and a similar identity holds for the finite-dimensional distributions of the processes. \qed
\end{proof}

\begin{proof}[of Theorem \ref{main2_time}] We restrict ourselves to a proof of
\begin{equation}\label{eq:proof_main2_time}
\lim_{n\to\infty}\Prob\{N^{(n+k_{i})}_{[E_{n}(\rho)]}\ge l_{i},\,i=1,\ldots,m\}\ =\ \Prob\{N^{*}_{k_{i}}(\rho)\ge l_{i},\,i=1,\ldots,m\}
\end{equation}
for arbitrary $m\in\N$, $k_{1},\ldots,k_m\in\Z$, $l_{1},\ldots,l_m\in\{2,3,\ldots\}$ and $\rho>1$. The general case when $\rho$ may also vary can be treated by exactly the same arguments. We have, using \eqref{duality_S_N},
\begin{align*}
\Prob\{N^{(n+k_{i})}_{[E_{n}(\rho)]}\ge l_{i},\,i=1,\ldots,m\}\ &=\ \Prob\{S^{(n+k_{i})}_{l_{i}}\le E_{n}(\rho),\,i=1,\ldots,m\}\\
&=\ \Prob\{L_{n+k_{i}}(S^{(n+k_{i})}_{l_{i}})\le L_{k_{i}}(\rho),\,i=1,\ldots,m\}.
\end{align*}
By Theorem \ref{main1_time}, the last probability on the right-hand side converges to
$$
\Prob\{S^{*}_{k_{i},l_{i}}\le L_{k_{i}}(\rho),\,i=1,\ldots,m\}\ =\ \Prob\{N^{*}_{k_{i}}(\rho)\ge l_{i}, \,i=1,\ldots,m\}$$
as $n\to\infty$, thus proving \eqref{eq:proof_main2_time}.\qed
\end{proof}

\subsection{... of Theorems \ref{mainT0}, \ref{mainT}, \ref{LLN_for_T} and Prop.~\ref{prop:no_further_limits}\label{subsec:proof_mainT}}

We first give a proof of Theorem \ref{mainT} because the proof of Theorem \ref{mainT0} will draw on this result (and its Corollary \ref{mainT_tilde}).

\begin{proof}[of Theorem \ref{mainT}]
We have divided the proof into five steps.

\vspace{.1cm}\noindent
\textsc{Step 1.} We first prove \eqref{eq:T_fdd_conv}, i.e.
\begin{equation*}
(T([E_{n}(\rho)])-n)_{\rho>1}\todistrfd (T^{*}(\rho))_{\rho>1}.
\end{equation*}
Fix $m\in\N$, $1<\rho_{1}<\ldots <\rho_m$ and $k_{1},\ldots,k_m\in\Z$ and note that for large $n$
\begin{align*}
\Prob\{T([E_{n}(\rho_{i})])-n\le k_{i},\,i=1,\ldots,m\}\ &=\ \Prob\{S^{(n+k_{i})}_{2} > E_{n}(\rho_{i}),\,i=1,\ldots,m\}\\
&=\ \Prob\{L_{n+k_{i}}(S^{(n+k_{i})}_{2}) > L_{k_{i}}(\rho_{i}),\,i=1,\ldots,m\},
\end{align*}
where \eqref{eq:group_property} has been used for the last step and $L_{-k}=E_{k}$ for $k\le 0$ should be recalled. By Theorem~\ref{main1_time}, we know that
$$ (L_{n+k_{1}}(S^{(n+k_{1})}_{2}), \ldots,L_{n+k_m}(S^{(n+k_m)}_{2}))\ \todistr\
(S_{k_{1},2}^{*},\ldots, S_{k_{m},2}^{*}), $$
and the random variables ${S}^{*}_{k_{1},2},\ldots,{S}^{*}_{k_{m},2}$ (by definition, of the same law as ${S}^{*}_{2}$) have no atoms by Prop.~\ref{prop:no_atoms}. By combining these facts,
$$ \lim_{n\to\infty} \Prob\{T([E_{n}(\rho_{i})])-n\le k_{i},\,i=1,\ldots,m\}\ =\
\Prob\{S_{k_{i},2}^{*} >L_{k_{i}}(\rho_{i}),\,i=1,\ldots,m\} $$
follows, and this completes the proof of~\eqref{eq:T_fdd_conv} if we put, for $\rho>1$,
\begin{equation}\label{T_explicit}
T^{*}(\rho)\ :=\ \inf\{k\in\Z:{S}_{k,2}^{*}>{L}_{k}(\rho)\}\ =\ \inf\{k\in\Z:E_{k}({S}_{k,2}^{*})>\rho\}.
\end{equation}
The infimum is indeed well-defined because ${S}_{k,2}^{*}\to\infty$ as $k\to\infty$ by Theorem~\ref{SLLN/CLT_S_{n}}, and $E_{k}({S}_{k,2}^{*})$ is nondecreasing in $k$:
$$ E_{k+1}({S}_{k+1,2}^{*})\ =\ E_{k+1}(L_{1}({S}_{k,\nu(2)}^{*})\ =\ E_{k}({S}_{k,\nu(2)}^{*})\ \ge\  E_{k}({S}_{k,2}^{*}), $$
in view of $\nu(2)\ge 2$.

\vspace*{.2cm}\noindent
\textsc{Step 2.} That $(T^{*}(\rho))_{\rho>1}$ and $(T^{*}(e^{\rho-1})-1)_{\rho>1}$ have the same finite-dimensional distributions, follows by a double use of \eqref{eq:T_fdd_conv}, viz.
\begin{align*}
(T^{*}(\rho))_{\rho>1}\ \overset{f.d.d.}{\longleftarrow}&\ (T([E_{n+1}(\rho)])-(n+1))_{\rho>1}\\
=&\ (T([E_{n}(e^{\rho-1})])-n-1)_{\rho>1}\ \overset{f.d.d.}{\longrightarrow}\
(T^{*}(e^{\rho-1})-1)_{\rho>1}.
\end{align*}

\vspace*{.2cm}\noindent
\textsc{Step 3.}
It is clear from~\eqref{T_explicit} that the sample paths of $(T^{*}(\rho))_{\rho>1}$ are nondecreasing and c\`adl\`ag. To prove the stochastic continuity, observe that
\begin{align*}
\Prob\{T^{*} (\rho+\eps) &- T^{*}(\rho-\eps) \ge 1\}\\
&=\ \lim_{n\to\infty} \Prob\{T([E_{n}(\rho+\eps)]) - T([E_{n}(\rho-\eps)]) \ge 1\}\\
&=\ \lim_{n\to\infty} \Prob\{E_{n}(\rho-\eps) < S_k^{(n)} \le E_{n}(\rho+\eps)\text{ for some }k\in\N\}\\
&\le\ \lim_{n\to\infty} \sum_{k=1}^{\infty}\Prob\{E_{n}(\rho-\eps) < S_k^{(n)} \le E_{n}(\rho+\eps)\}.
\end{align*}
Recalling that $L_{n}(S_k^{(n)})$ has the same distribution as $\cX_{n,k}$, we obtain
\begin{equation}\label{eq:stoch cont estimate}
\Prob\{T^{*} (\rho+\eps) -T^{*}(\rho-\eps) \ge 1\}\ \le\
\lim_{n\to\infty} \sum_{k=1}^{\infty}\Prob\{\rho-\eps < \cX_{n,k} \le \rho+\eps\}.
\end{equation}
For every  fixed $k\in\N$, we have
$$ \lim_{\eps\downarrow 0} \lim_{n\to\infty} \Prob\{\rho-\eps \le \cX_{n,k} \le \rho+\eps\}\ =\  \lim_{\eps\downarrow 0} \Prob\{\rho-\eps \le \cX_{k}^{*} \le \rho+\eps\}\ =\ 0 $$
because $\cX_{k}^{*}\eqdist S_k^{*}$ has no atoms (Prop.~\ref{prop:no_atoms}). The stochastic continuity will now follow from \eqref{eq:stoch cont estimate} via the dominated convergence theorem once the estimate
\begin{equation}\label{eq:mathcal_X_small}
\Prob\{\cX_{n,k} \le a\} \le \frac{b}{k^{5/4}}
\end{equation}
has been established for all $a$ and sufficiently large $n,k$, and some $b>0$. To this end, note that, for large $k$,
$$ \Prob\{\cX_{n,k} \le a\}\ \le\ \Prob\left\{|\cX_{n,k} - L_{1}(\nu^{(1)}(k))| > \frac{k}{3}\right\}
\,+\,\Prob\left\{|L_{1}(\nu^{(1)}(k)) - k| > \frac{k}{3}\right\}. $$
Recalling~\eqref{eq:E_X_minus_L_{n}u}, we infer with the help of Markov's inequality
\begin{align*}
\Prob\left\{|\cX_{n,k} - L_{1}(\nu^{(1)}(k))| > \frac{k}{3}\right\}\ &\le\ \frac{3}{k}\Erw|\cX_{n,k} - L_{1}(\nu^{(1)}(k))|\\
&\le\ \frac{3}{k}\sum_{l=2}^{n}\Erw|\cX_{l,k} - \cX_{l-1,k}|\ \le\ \frac{3C'}{k^{5/4}},
\end{align*}
and for the second term in a similar manner that
$$ \Prob\left\{|L_{1}(\nu^{(1)}(k)) - k| > \frac{k}{3}\right\}\ \le\ \frac{3^{4}}{k^2}\,\Erw\left(\frac{L_{1}(\nu^{(1)}(k)) - k}{\sqrt k}\right)^4\ \le\ \frac{3^{4}}{k^2}C, $$
where Proposition~\ref{nu_{n}_prop} in the Appendix has been utilized for the last estimate.

\vspace*{.2cm}\noindent
\textsc{Step 4.}
We next prove that $\Prob\{T^{*}(\rho)=k\} >0$ for all $k\in\Z$. Note that
$$ \Prob\{T^{*}(\rho)=k\}\ =\ \lim_{n\to\infty} \Prob\{T([E_{n}(\rho)])=n+k\}. $$
If exactly one person among $2,\ldots, [E_{n}(\rho)]$ survives at time $n+k-1$ and if this person leaves at time $n+k$, then the event $\{T([E_{n}(\rho)]) = n+k\}$ occurs. Consequently,
\begin{align*}
\Prob\{T([E_{n}(\rho)])=n+k\}\ &\ge\ \frac{1}{2}\,\Prob\left\{S_{2}^{(n+k-1)}\le E_{n}(\rho)<S_{3}^{(n+k-1)}\right\}\\
&=\ \frac{1}{2}\,\Prob\left\{L_{n+k-1}(S_{2}^{(n+k-1)}) \le L_{k-1}(\rho) < L_{n+k-1}(S_{3}^{(n+k-1)})\right\}.
\end{align*}
Letting $n\to\infty$ and using Theorem~\ref{main1} and Prop.~\ref{prop:no_atoms}, we obtain
$$ \Prob\{T^{*}(\rho) =k\}\,=\,\lim_{n\to\infty} \Prob\{T([E_{n}(\rho)])=n+k\}\,\ge\,\frac{1}{2}\,\Prob\left\{S_{2}^{*}\le L_{k-1}(\rho)\le S_{3}^{*}\right\}, $$
and the rightmost probability is strictly positive by Prop.~\ref{prop:no_atoms}.

\vspace*{.2cm}\noindent
\textsc{Step 5.} We must finally prove that $T^{*}(\rho_{1}),\,T^{*}(\rho_{2})$ have distinct laws for any $1<\rho_{1} < \rho_{2}$. In fact, we even show \emph{strict} stochastic domination, i.e.
$$ \Prob\{T^{*}(\rho_{1})\le k\} > \Prob\{T^{*}(\rho_{2})\le k\} $$
for all $k\in\Z$. By~\eqref{T_explicit}, we have
\begin{align*}
\Prob\{T^{*}(\rho_{1})\le k\} - \Prob\{T^{*}(\rho_{2})\le k\}\ &=\ \Prob\{S_{2}^{*}>L_k(\rho_{1})\} - \Prob\{S_{2}^{*}>L_k(\rho_{2})\}\\
&=\ \Prob\{L_k(\rho_{1}) < S_{2}^{*}\le L_k(\rho_{2})\},
\end{align*}
and the last probability is strictly positive by Prop.~\ref{prop:no_atoms}.\qed
\end{proof}

\begin{proof}[of Theorem~\ref{mainT0}]
We prove that the sequence $(T(M)- \log^* M)_{M\in\N}$ is bounded in $L^{r}$ for every $r > 0$ and thus, in particular, tight. Fix $M\in\{2,3,\ldots\}$ and pick $n\in\N$ such that $\widetilde E_{n}(0)\le  M < \widetilde E_{n+1}(0)$. Note that $\log^*M = n$. For every $z\in\Z$, we have
\begin{align*}
\Prob[T([\widetilde E_{n}(0)])-n\ge z]\ &\le\ \Prob[T(M) - \log^* M \ge z]\ \le\ \Prob[T([\widetilde E_{n+1}(0)])-(n+1) \ge  z-1],
\end{align*}
whence it suffices to show that, for all $z\in\N$ and $n$ sufficiently large,
\begin{align}
&\Prob\{T([\widetilde E_{n}(0)])\ge n+z\}\ \le\ \frac{C_{1}}{2^{z}},\label{eq:tight_wspom_{1}}
\shortintertext{and}
&\Prob\{T([\widetilde E_{n}(0)])\le n - z\}\ \le\ \frac{C_{2}}{\widetilde E_z(0)}\label{eq:tight_wspom_{2}}.
\end{align}

\noindent
\textsc{Proof of~\eqref{eq:tight_wspom_{1}}.}  We have
\begin{align*}
\Prob\{T([\widetilde E_{n}(0)])\ge n+z\}\ &=\ \sum_{k=2}^{\infty}\Prob\left\{N_{[\widetilde E_{n}(0)]}^{(n)}=k\right\}\,\Prob\{N_{k}^{(z)}\ge 2\}.
\end{align*}
Use the crude estimate $\Prob\{N^{(z)}_{k}\ge 2\}\le k2^{-z}$  to obtain
$$ \Prob\{T([\widetilde E_{n}(0)])\ge n+z\}\ \le\ 2^{-z}\,\Erw N^{(n)}_{[\widetilde E_{n}(0)]}. $$
It therefore remains to show that $\sup_{n\in\N}\Erw N^{(n)}_{[\widetilde E_{n}(0)]}<\infty$. Observe that
$$ \Erw N^{(n)}_{[\widetilde E_{n}(0)]}\ =\ \Erw \left(\sum_{k=1}^{\infty} \1_{\{S_{k}^{(n)}\le\widetilde E_{n}(0)\}}\right)\ =\ \sum_{k=1}^{\infty} \Prob\{\mathcal{X}_{n,k} \le  L_{n} (\widetilde E_{n}(0))\}. $$
Finally, $\lim_{n\to\infty} L_{n} (\widetilde E_{n}(0))=f(0)$ and \eqref{eq:mathcal_X_small} imply
$$ \Prob\{\mathcal{X}_{n,k}\le L_{n} (\widetilde E_{n}(0))\}\ \le\ \Prob\{\mathcal{X}_{n,k}\le f(0)+1\}\ \le\ bk^{-5/4} $$
for sufficiently large $n$ and $k$, and this is enough for \eqref{eq:tight_wspom_{1}}.

\vspace*{.2cm}
\noindent
\textsc{Proof of~\eqref{eq:tight_wspom_{2}}.}
We have
\begin{align*}
\Prob\{T([\widetilde E_{n}(0)])\le n-z\}\ &=\ \Prob\{S_{2}^{(n-z)}>\widetilde E_{n}(0)\}\ =\ \Prob\{L_{n-z} (S_{2}^{(n-z)})>L_{n-z}(\widetilde E_{n}(0))\}\\
&\le\ \Prob\{L_{n-z} (S_{2}^{(n-z)})>\widetilde L_{n-z}(\widetilde E_{n}(0))\}\ =\ \Prob\{L_{n-z} (S_{2}^{(n-z)})>\widetilde E_{z}(0)\}
\end{align*}
and know that $\Erw L_{n-z}(S_{2}^{(n-z)})=\Erw\mathcal{X}_{n-z,2}$. But the sequence $(\Erw \mathcal{X}_{n,2})_{n\ge 1}$ is bounded, for it has bounded variation (see \eqref{eq:sum_expect_diff_X_{n}_j}). Hence, \eqref{eq:tight_wspom_{2}} follows with the help of Markov's inequality.

\vspace*{.2cm}
To prove that $T(M) - \log^*M$ does not converge in distribution, we argue as follows. For arbitrary $0<\rho_{1}<\rho_{2}<1$ and sufficiently large $n$,
$$ \widetilde{E}_{n}(0)\,<\,[\widetilde{E}_{n}(\rho_{1})]\,\le\,\widetilde{E}_{n}(\rho_{1})\,<\,[\widetilde{E}_{n}(\rho_{2})]\,\le\,\widetilde{E}_{n}(\rho_{2})\,<\,\widetilde{E}_{n+1}(0) $$
holds true, thus
$$ \log^* [\widetilde E_{n}(\rho_{i})]\,=\,n\quad\text{for }i=1,2. $$
From Corollary~\ref{mainT_tilde}, we know that, for $i=1,2$,
$$ T([\widetilde E_{n}(\rho_{i})]) - \log^* [\widetilde E_{n}(\rho_{i})]\ =\ T([\widetilde E_{n}(\rho_{i})])-n\ \todistr\ T^{*}(f(\rho_{i})). $$
Since $T^{*}(f(\rho_{1}))$ and $T^{*}(f(\rho_{2}))$ have distinct distributions by Theorem~\ref{mainT} (recall also that $f$ is strictly increasing), the claim is proved.\qed
\end{proof}

\begin{proof}[of Proposition~\ref{prop:no_further_limits}]
For any sufficiently large $n\in\N$, we can find $k(n)\in\N$ such that $E_{k(n)}(2)\le a_{n}<E_{k(n)+1}(2)$. Put $\rho_{n}:=L_{k(n)}(a_{n}) \in [2, e)$. Possibly after passing to a subsequence, we may assume that $\lim_{n\to\infty} \rho_{n} = \rho \in [2, e]$ and that $k(n)$ is strictly increasing. But then, by Lemma~\ref{lem:T_rho_{n}} below, we have
$$ T(a_{n}) - k(n)\ =\ T([E_{k(n)}(\rho_{n})]) - k(n)\ \todistr\ T^{*} (\rho). $$
On the other hand, by our assumption
$$ T(a_{n})-b_{n}\ \todistr\  Z. $$
By Theorem~\ref{mainT} and our assumption, respectively, $T^{*}(\rho)$ and $Z$ are nondegenerate random variables. Therefore the convergence of types lemma ensures the existence of $c$ (necessarily an integer) such that $Z$ and $T^{*}(\rho) + c$ have the same law.\qed
\end{proof}

\begin{Lemma}\label{lem:T_rho_{n}}
Let $(\rho_{n})_{n\in\N} \subset (1,\infty)$ be a sequence converging to $\rho>1$. Then,
$$ T([E_{n}(\rho_{n})])-n\ \todistr\ T^{*} (\rho). $$
\end{Lemma}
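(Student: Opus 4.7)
The plan is to use a sandwich argument based on the monotonicity of $T$ and $E_n$, reducing to the case of constant $\rho$ handled by Theorem~\ref{mainT}, and then to close the sandwich via the stochastic continuity of $T^*(\rho)$ established in Step~3 of the proof of that theorem.

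First I will note two monotonicity facts. For each $n$, $\rho\mapsto E_n(\rho)$ is strictly increasing on $[1,\infty)$, since $E_0=\mathrm{id}$ and $x\mapsto e^{x-1}$ is increasing. Also, $M\mapsto T(M)$ is nondecreasing: if $M_1\le M_2$ then the set of survivors among $\{1,\dots,M_1\}$ is contained in the set of survivors among $\{1,\dots,M_2\}$ in every round, so the first player can be the lone survivor of the smaller set no later than of the larger. Fix $\eps>0$ with $\rho-\eps>1$. Since $\rho_n\to\rho$, for all sufficiently large $n$ we have $\rho-\eps\le\rho_n\le\rho+\eps$, and therefore
\begin{equation*}
[E_n(\rho-\eps)]\ \le\ [E_n(\rho_n)]\ \le\ [E_n(\rho+\eps)],
\end{equation*}
which yields the sandwich
\begin{equation*}
T([E_n(\rho-\eps)])-n\ \le\ T([E_n(\rho_n)])-n\ \le\ T([E_n(\rho+\eps)])-n.
\end{equation*}

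Next, all random variables above are integer-valued, so it suffices to show $\Prob\{T([E_n(\rho_n)])-n\le k\}\to\Prob\{T^{*}(\rho)\le k\}$ for every $k\in\Z$. Applying Theorem~\ref{mainT} to $\rho\pm\eps$ (both $>1$ for small $\eps$) and using that $k+\tfrac12$ is a continuity point of the distribution of the integer-valued variable $T^{*}(\rho\pm\eps)$, the sandwich gives
\begin{equation*}
\Prob\{T^{*}(\rho+\eps)\le k\}\ \le\ \liminf_{n\to\infty}\Prob\{T([E_n(\rho_n)])-n\le k\}\ \le\ \limsup_{n\to\infty}\Prob\{T([E_n(\rho_n)])-n\le k\}\ \le\ \Prob\{T^{*}(\rho-\eps)\le k\}.
\end{equation*}
Finally letting $\eps\downarrow 0$, the stochastic continuity of $(T^{*}(\rho))_{\rho>1}$ (Step~3 in the proof of Theorem~\ref{mainT}) yields $T^{*}(\rho\pm\eps)\todistr T^{*}(\rho)$, and hence both outer probabilities converge to $\Prob\{T^{*}(\rho)\le k\}$, completing the proof.

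I do not foresee a serious obstacle: the argument only combines Theorem~\ref{mainT}, monotonicity of $T$ and $E_n$, and the stochastic continuity of $T^{*}$. The only point that requires a moment's care is that the sandwich bounds the CDF at \emph{integer} points, but this is harmless since the approximating limits $T^{*}(\rho\pm\eps)$ are integer-valued and one may work at the continuity point $k+\tfrac12$ instead.
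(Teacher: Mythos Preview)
Your proof is correct and follows essentially the same sandwich argument as the paper: bound $T([E_n(\rho_n)])-n$ between $T([E_n(\rho\pm\eps)])-n$ using monotonicity, apply Theorem~\ref{mainT} to the outer terms, and close the sandwich via the stochastic continuity of $T^{*}$. Your exposition is slightly more detailed (you explicitly justify the monotonicity of $T$ and handle the integer-valued CDF issue via the continuity point $k+\tfrac12$), but the approach is identical.
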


\begin{proof}
Fix any $k\in\Z$. For any given $\eps>0$, there is $n(\eps)$ such that $|\rho_{n}-\rho|< \eps$ for all $n\ge n(\eps)$.
Since $T([E_{n}(\rho-\eps)]) \le T([E_{n}(\rho_{n})]) \le T([E_{n}(\rho+\eps)])$ for $n\ge n(\eps)$, we obtain for $n\ge n(\eps)$ that
\begin{align*}
\Prob\{T([E_{n}(\rho+\eps)])-n\le k \}\ &\le \Prob\{T([E_{n}(\rho_{n})])-n\le k\}\ \le\ \Prob\{T([E_{n}(\rho-\eps)]) -n \le k\}.
\end{align*}
By taking the limit $n\to\infty$ and using Theorem~\ref{mainT}, we find
\begin{align*}
\Prob\{T^{*}(\rho+\eps)\le k\}\ &\le\ \liminf_{n\to\infty}\Prob\{T([E_{n}(\rho_{n})]) -n \le k \}\\
&\le \limsup_{n\to\infty}\Prob\{T([E_{n}(\rho_{n})])-n\le k \}\ \le\ \Prob\{T^{*}(\rho-\eps)\le k\}.
\end{align*}
Finally, let $\eps\downarrow 0$ and use the continuity in probability of $(T^{*}(\rho))_{\rho>1}$  (shown in Theorem~\ref{mainT}), to arrive at
$$ \lim_{n\to\infty} \Prob\{T([E_{n}(\rho_{n})]) -n \le k \}\ =\ \Prob\{T^{*}(\rho)\le k\}, $$
which is the desired conclusion.\qed
\end{proof}

\begin{proof}[of Theorem~\ref{LLN_for_T}]
The $L^{r}$-convergence follows directly from the $L^{r}$-boundedness of $T(M)-\log^{*}M$ (provided by Theorem \ref{mainT0}). To prove the almost sure convergence, fix $\eps>0$ and note that, by Markov's inequality and Theorem \ref{mainT0},
\begin{align*}
\Prob\left\{\left|\frac{T([\widetilde E_{n+1}(0)])}{\log^{*} [\widetilde E_{n+1}(0)]}-1\right|\ge \eps\right\}\ &\le\ \frac{\Erw (T([\widetilde E_{n+1}(0)])-\log^{*} [\widetilde E_{n+1}(0)])^2}{\eps^2 (\log^{*} [\widetilde E_{n+1}(0)])^2}\\
&\le\ \frac{C}{\eps^{2}(\log^{*}[\widetilde E_{n+1}(0)])^2}\\
&\le\ \frac{C}{\eps^{2}(\log^{*}\widetilde E_{n}(0))^2}\ =\ \frac{C}{\eps^{2}n^{2}}
\end{align*}
for some $C>0$. Hence, the Borel-Cantelli lemma provides us with
$$ \frac{T([\widetilde E_{n}(0)])}{\log^{*} [\widetilde E_{n}(0)]}\ \to\ 1\quad\text{a.s.} $$
Moreover, for each sufficiently large $M\in\N$, there exists $n$ such that
$$ [\widetilde{E}_{n}(0)]\le\widetilde{E}_{n}(0)\,<\,M\,\le\,\widetilde{E}_{n+1}(0)\,\le\, [\widetilde{E}_{n+2}(0)]. $$
Therefore,
$$ \frac{\log^{*}[\widetilde E_{n}(0)]}{\log^{*}M}\cdot\frac{T([\widetilde{E}_{n}(0)])}{\log^{*} [\widetilde E_{n}(0)]}\ \le\ \frac{T(M)}{\log^{*} M}\ \le\ \frac{T([\widetilde{E}_{n+2}(0)])}{\log^{*} [\widetilde E_{n+2}(0)]}\cdot\frac{\log^{*} [\widetilde E_{n+2}(0)]}{\log^{*} M}
$$
and the claim follows by a standard sandwich argument.\qed
\end{proof}

\subsection{... of Theorem \ref{mainT_{0}} and Propositions \ref{Proposition_last_rounds}, \ref{prop:T_0_ast_properties}}\label{subsec:PD-theorem-T_0}

\begin{proof}[of Proposition \ref{Proposition_last_rounds}]
We give the proof only for the one-dimensional distributions because the higher-dimensional ones are treated in a similar manner, but become notationally quite tedious. Fix $m\in\N$, $\rho>1$, $k\in\Z$ and integers $2\le l_{1}\le\ldots\le l_m$. We have
\begin{align*}
&\Prob\left\{T([E_{n}(\rho)])-n=k,N_{[E_{n}(\rho)]}^{(T([E_{n}(\rho)])-i)}=l_{i},\,i=1,\ldots,m\right\}\\
&\hspace{3cm}=\ \Prob\{N_{[E_{n}(\rho)]}^{(n+k)}=1,N_{[E_{n}(\rho)]}^{(n+k-i)}=l_{i},\,i=1,\ldots,m\}.
\end{align*}
By Theorem \ref{main2_time}, the last probability converges to
\begin{align*}
\Prob\{N^{*}_{k}(\rho)=1,N^{*}_{k-i}(\rho)=l_{i},\,i=1,\ldots,m\}
\end{align*}
as $n\to\infty$, and this limit probability in turn equals
$$ \Prob\{T^{*}(\rho)=k,N^{*}_{T^{*}(\rho)-i}(\rho)=l_{i},\,i=1,\ldots,m\} $$
because
$$ T^{*}(\rho)=\inf\{k\in\Z \colon N^{*}_{k}(\rho)=1\},\quad \rho>1, $$
which is the same as \eqref{T_explicit} by the definition of $N^{*}_k(\rho)$.\qed
\end{proof}

\begin{proof}[of Theorem \ref{mainT_{0}}]
Fix $m\in\N$. We have the decomposition
\begin{align*}
T_{0}&([E_{n}(\rho)])-n\ =\ \sum_{j=0}^{T([E_{n}(\rho)])-1}\1\left\{N_{[E_{n}(\rho)]}^{(j)}\neq N_{[E_{n}(\rho)]}^{(j+1)}\right\}-n\\
&=\ T([E_{n}(\rho)])-n\,-\,\sum_{j=0}^{T([E_{n}(\rho)])-1}\1\left\{N_{[E_{n}(\rho)]}^{(j)}=N_{[E_{n}(\rho)]}^{(j+1)}\right\}\\
&=\ \left(T([E_{n}(\rho)])-n-\sum_{j=(T([E_{n}(\rho)])-m)\vee 0}^{T([E_{n}(\rho)])-1}\1\left\{N_{[E_{n}(\rho)]}^{(j)}=N_{[E_{n}(\rho)]}^{(j+1)}\right\}\right)\\
&\hspace{.8cm}-\ \sum_{j=0}^{T([E_{n}(\rho)])-m-1}\1\left\{N_{[E_{n}(\rho)]}^{(j)}=N_{[E_{n}(\rho)]}^{(j+1)}\right\}\\
&=:\ T_{0,m}([E_{n}(\rho)])-V_{0,m}([E_{n}(\rho)])
\end{align*}
for any $n\in\N$ and note that, by Proposition \ref{Proposition_last_rounds}, as $n\to\infty$,
\begin{align*}
(T_{0,m}([E_{n}(\rho)]))_{\rho>1}\todistrfd \left(T^{*}(\rho)-\sum_{j=1}^{m}\1\left\{N^{*}_{T^{*}(\rho)-j}(\rho)=N^{*}_{T^{*}(\rho)-j+1}(\rho)\right\}\right)_{\rho>1}
\end{align*}
As $m\to\infty$, the right-hand limit converges almost surely to
\begin{equation}\label{eq:T_0_ast_def}
T_{0}^{*}(\rho)\
=\ T^{*}(\rho)-\sum_{j=-\infty}^{T^{*}(\rho)-1}\1\left\{N^{*}_{j}(\rho)=N^{*}_{j+1}(\rho)\right\}.
\end{equation}	
To verify that $T_{0}^{*}(\rho)$ is finite a.s., we need to check the a.s.\ convergence of the series
$$
\sum_{j=1}^{\infty}\1\left\{N^{*}_{-j}(\rho)=N^{*}_{-j+1}(\rho)\right\}.
$$
To this end, note that for all $j\in\N$ and $\rho>1$
\begin{align*}
N^{*}_{-j+1}(\rho)\ &=\ \#\{k\in\N:S^{*}_{k,-j+1}\le L_{-j+1}(\rho)\}\\
&=\ \#\{k\in\N:L_{1}(S^{*}_{\nu(k),-j})\le L_{-j+1}(\rho)\}\ =\ K(N^{*}_{-j}(\rho))
\end{align*}
with $(K(n))_{n\in \N}$ and $(\nu(n))_{n\in\N}$ being independent of $(N_{-j}^{*}(\rho))_{\rho>1}$. Consequently,
\begin{align*}
\Erw\sum_{j=1}^{\infty}\1\left\{N^{*}_{-j}(\rho)=N^{*}_{-j+1}(\rho)\right\}\ &=\
\Erw\sum_{j=1}^{\infty}\1\left\{K(N^{*}_{-j}(\rho))=N^{*}_{-j}(\rho)\right\}\ =\ \sum_{k=1}^{\infty}\frac{1}{k!}\sum_{j=1}^{\infty}\Prob\{N^{*}_{-j}(\rho)=k\}.
\end{align*}
Using $N_{-j}^{*}(\rho)\eqdist N^{*}(L_{-j}(\rho))=N^{*}(E_{j}(\rho))$, valid for all $j\in\N$, we obtain
\begin{align*}
\sum_{k=1}^{\infty}\frac{1}{k!}\sum_{j=1}^{\infty}\Prob\{N^{*}_{-j}(\rho)\le k\}\ &=\ \sum_{k=1}^{\infty}\frac{1}{k!}\sum_{j=1}^{\infty}\Prob\{N^{*}(E_{j}(\rho))\le k\}\\
&=\ \sum_{k=1}^{\infty}\frac{1}{k!}\sum_{j=1}^{\infty}\Prob\{S^{*}_{k+1}>E_j(\rho)\}\\
&\le\ \sum_{k=1}^{\infty}\frac{\Erw S_{k+1}^{*}}{k!}\sum_{j=1}^{\infty}\frac{1}{E_{j}(\rho)}\ <\ \infty,
\end{align*}
by Theorem \ref{SLLN/CLT_S_{n}}. This proves that $T_0^{*}(\rho)$ is finite a.s.
According to Theorem 3.2 in \cite{Billingsley:99}, it remains to prove that
\begin{equation}\label{billinglsey_double_limit}
\lim_{m\to\infty}\limsup_{n\to\infty}\Prob\{V_{0,m}([E_{n}(\rho)])\ge 1\}\ =\ 0.
\end{equation}
Use Markov's inequality to infer
\begin{align*}
\Prob\{V_{0,m}([E_{n}(\rho)])\ge 1\}\ &\le\ \Erw V_{0,m}([E_{n}(\rho)])\\
&=\ \sum_{j=0}^{\infty}\Prob\{N_{[E_{n}(\rho)]}^{(j)}=N_{[E_{n}(\rho)]}^{(j+1)},T([E_{n}(\rho)])-m > j\}\\
&=\ \sum_{j=0}^{\infty}\Prob\{N_{[E_{n}(\rho)]}^{(j)}=N_{[E_{n}(\rho)]}^{(j+1)},N_{[E_{n}(\rho)]}^{(m+j)}>1\}.
\end{align*}
Note that, with $(K^{(i)}(M))_{M\in\N}$, $i\in\N$, denoting independent copies of $(K(M))_{M\in\N}$ which are also independent of $N_{[E_{n}(\rho)]}^{(j)}$, we have
$$ N_{[E_{n}(\rho)]}^{(j+l)}\ \eqdist\ K^{(j+l\darr j+1)}(N_{[E_{n}(\rho)]}^{(j)}), $$
for $l=1,\ldots,m$. Therefore,
\begin{align*}
\sum_{j=0}^{\infty}\,&\Prob\left\{N_{[E_{n}(\rho)]}^{(j)}=K^{(j+1)}(N_{[E_{n}(\rho)]}^{(j)}),K^{(j+m \darr j+1)}(N_{[E_{n}(\rho)]}^{(j)})>1\right\}\\
&=\ \sum_{j=0}^{\infty}\sum_{k=2}^{\infty}\Prob\left\{N_{[E_{n}(\rho)]}^{(j)}=k\right\}\,\Prob\left\{K^{(j+1)}(k)=k\right\}\,\Prob\left\{K^{(j+m\darr j+2)}(k)>1\right\}\\
&=\ \sum_{k=2}^{\infty}\Prob\left\{K^{(1)}(k)=k\right\}\Prob\left\{K^{(1\uarr m-1)}(k)>1\right\}\sum_{j=0}^{\infty}\Prob\left\{N_{[E_{n}(\rho)]}^{(j)}= k\right\}.
\end{align*}
Now observe that $(N_{[E_{n}(\rho)]}^{(j)})_{j\in\N_{0}}$ is a nonincreasing Markov chain and that
$$ p(n,k)\ :=\ \sum_{j=0}^{\infty}\Prob\{N_{[E_{n}(\rho)]}^{(j)}=k\} $$
is just the mean number of visits of this chain to the state $k$. If the chain ever visits $k\ge 2$ then the number of rounds it remains in $k$ is geometrically distributed with parameter $1-1/k!$, hence
$$ p(n,k)\ \le\ \Erw\,\textrm{Geom}\left(1-\frac{1}{k!}\right)\ \le\ 2. $$
A combination of all previous estimates finally yields
\begin{align*}
\limsup_{n\to\infty}\Prob\{V_{0,m}([E_{n}(\rho)])\ge 1\}\le 2\sum_{k=2}^{\infty}\frac{1}{k!}\Prob\{K^{(1\uarr m-1)}(k)>1\}.
\end{align*}
The latter series converges uniformly in $m$ because it is dominated by $\sum_{k=2}^{\infty}\frac{1}{k!}$. Therefore, we can pass to the limit $m\to\infty$ under the sum sign, giving
\begin{align*}
\lim_{m\to\infty}\sum_{k=2}^{\infty}\,\frac{1}{k!}\Prob\{K^{(1\uarr m-1)}(k)>1\}
\ =\ \sum_{k=2}^{\infty}\frac{1}{k!}\lim_{m\to\infty}\Prob\{K^{(1\uarr m-1)}(k)>1\}\ =\ 0,
\end{align*}
which completes the proof of~\eqref{billinglsey_double_limit}.
\qed
\end{proof}

\begin{proof}[of Proposition~\ref{prop:T_0_ast_properties}]
To prove that $T^{*}_0(\rho)$ is nondegenerate, assume that $T^{*}_0(\rho)=k$ a.s. But then $T^{*}(\rho)\geq T^{*}_0(\rho) = k$, which is a contradiction to $\Prob\{T^{*}(\rho)=k-1\}>0$, see Step 4 in the proof of Theorem~\ref{mainT}.

\vspace{.1cm}
Next we fix $1<\rho_{1}<\rho_{2}$ and prove that $T^{*}_0(\rho_{1}) \leq T^{*}_0(\rho_{2})$ with a strict inequality on an event of positive probability.
 Recall from~\eqref{eq:T_0_ast_def} that 
$$
T^{*}_0(\rho) = T^{*}(\rho) -  \sum_{j=-\infty}^{T^{*}(\rho)-1}\1\left\{N^{*}_{j}(\rho)=N^{*}_{j+1}(\rho)\right\}.
$$
It is clear that $T^{*}(\rho_{1}) \leq T^{*}( \rho_{2})$.
If for some integer $j < T^{*}(\rho_{1})$ we have $N^{*}_{j}(\rho_{2}) = N^{*}_{j+1}(\rho_{2})$, then $N^{*}_{j}(\rho_{1}) = N^{*}_{j+1}(\rho_{1})$. Thus,
\begin{align*}
T^{*}_0(\rho_{2})
&=
T^{*}(\rho_{2}) -  \sum_{j=T^{*}(\rho_{1})}^{T^{*}(\rho_{2})-1}\1\left\{N^{*}_{j}(\rho_{2})=N^{*}_{j+1}(\rho_{2})\right\}
-  \sum_{j=-\infty}^{T^{*}(\rho_{1})-1}\1\left\{N^{*}_{j}(\rho_{2})=N^{*}_{j+1}(\rho_{2})\right\}\\
&\geq
T^{*}(\rho_{2}) -  \left( T^{*}(\rho_{2})-T^{*}(\rho_{1})\right)
-\sum_{j=-\infty}^{T^{*}(\rho_{1})-1}\1\left\{N^{*}_{j}(\rho_{1})=N^{*}_{j+1}(\rho_{1})\right\}\\
&=T^{*}_0(\rho_{1}).
\end{align*}
To prove the inequality to be strict with positive probability, recall $T^{*}(\rho) = \inf\{k\in\Z: E_k (S_{k,2}^{*})>\rho\}$ and note that
\begin{align*}
&\Prob
\left\{
\sum_{j=T^{*}(\rho_{1})}^{T^{*}(\rho_{2})-1}\1\left\{N^{*}_{j}(\rho_{2})=N^{*}_{j+1}(\rho_{2})\right\}
< T^{*}(\rho_{2}) - T^{*}(\rho_{1})
\right\}\\
&\quad \quad\quad \quad \geq
\Prob
\left\{
S_{0,2}^{*} < \rho_{1} < S_{0,3}^{*} < \rho_{2} < S_{0,4}^{*},  \rho_{1} < E_{1} (S_{1,2}^{*}) < \rho_{2} < E_{1} (S_{1,3}^{*}), \rho_{2} < E_{2}(S_{2,2}^{*})
\right\}\\
&\quad \quad\quad \quad =
\frac 12 \cdot \frac 13  \cdot  \frac 12 \cdot \Prob
\left\{
S_{0,2}^{*} < \rho_{1} < S_{0,3}^{*} < \rho_{2} < S_{0,4}^{*}
\right\},
\end{align*}
which is strictly positive by Prop.~\ref{prop:no_atoms}. Indeed, the condition on the $S_{k,j}^{*}$'s in the second line guarantees that $T^{*}(\rho_{1}) = 1$, $T^{*}(\rho_{2}) = 2$, and $2=N_{1}^{*}(\rho_{2}) \neq N_{2}^{*}(\rho_{2})=1$.
\qed
\end{proof}

\subsection{... of Theorems \ref{PD-theorem}, \ref{PD-theorem_tight}}\label{subsec:PD-theorem}

\begin{proof}[of Theorem \ref{PD-theorem}]
We will prove $X_{1}(n) \eqdist T_{0}(n)$, recalling that
$$ T_{0}(n)\ =\ \sum_{j=0}^{T(n)-1}\1_{\{N_{n}^{(j)}\neq N_{n}^{(j+1)}\}} $$
denotes the number of conclusive rounds in the leader election procedure using records.
It is known (see calculations on p.~2170 in \cite{Moehle:10}) that the number $J_{\theta}(n)$ of blocks after the first collision in the Poisson-Dirichlet coalescent with parameter $\theta>0$ and $n$ initial blocks has distribution
\begin{equation}\label{PD_dec_distr}
\Prob\{J_{\theta}(n)=k\}\ =\ \frac{\theta^{k}}{[\theta]_{n}-\theta^n}\stirling{n}{k},\quad 1\le k<n,\quad n\ge 2,
\end{equation}
where $\stirling{n}{k}$ denotes the unsigned Stirling number of the first kind.
For $\theta=1$, this gives
$$ \Prob\{J_{1}(n)=k\}\ =\ \frac{\stirling{n}{k}}{n!-1}\ =\ \Prob\{K(n)=k|K(n)<n\},\quad 1\le k<n,\quad n\ge 2. $$
The number of blocks in the coalescent forms a decreasing Markov chain on the set $\{1,\ldots,n\}$ which starts at $n$ and is eventually absorbed at state $1$.  The transitions of this chain can be described as follows. Generate $n$ iid variables with continuous distribution and count the number $K(n)$ of records among them. If there are exactly $n$ records, repeat the procedure until at some point one gets a number $J_{1}(n) < n$ records. The Markov chain jumps from state $n$ to state $J_{1}(n)$, and the procedure is repeated independently from the past. After a random number $X_{1}(n)$ of downward jumps, the Markov chain is absorbed at state $1$.
From this description, it follows that
\begin{equation}\label{recursion_X}
X_{1}(1)\ =\ 0\quad\text{and}\quad X_{1}(n)\ \eqdist\ \1\{K(n)<n\}+\wh{X}_{1}(K(n))\quad\text{for }n\ge 2,
\end{equation}
where again $(\wh{X}_{1}(n))_{n\in\N}\eqdist (X_{1}(n))_{n\in\N}$ and $(\wh{X}_{1}(n))_{n\in\N}$ is independent of $K(n)$.

\vspace{.1cm}
Let us show that the  same recursion holds for $(T_0(n))_{n\in\N}$. Clearly, $T_{0}(1)=0$. Conditioning on the number of players after the first round, which is distributed as $K(n)$, we see that for $n\ge 2$ the following equality in distribution holds:
\begin{equation}\label{recursion_T_{0}}
T_{0}(n)\ \eqdist\ \1\{K(n)<n\}+\wh{T}_{0}(K(n)),
\end{equation}
where $\wh{T}_{0}$ has the obvious meaning. Therefore, by induction over $n\in\N$, we obtain $T_{0}(n)\eqdist X_{1}(n)$ for all $n$  and the result finally follows from Corollary \ref{mainT_{0}_tilde}.
The properties of the subsequential limits $\widetilde T_0^{*}(\rho) = T_0^{*}(f(\rho))$, $\rho\in\R$, follow from the corresponding properties of $T_0^{*}(\rho)$, $\rho>1$, stated in Proposition~\ref{prop:T_0_ast_properties} combined with the strict monotonicity of $f$.
\qed
\end{proof}

\begin{proof}[of Theorem \ref{PD-theorem_tight}]
Since $X_{1}(M)$ and $T_0(M)$ have the same distribution, it suffices to show that the sequence $(T_0(M) - \log^{*} M)_{M\in\N}$ is $L^{r}$-bounded for every $r>1$. Recall that $T_{0}(M) = T(M) - T_{1}(M)$ with
$$
T_{1}(M) \ =\  \sum_{j=0}^{T(M)-1} \1\{N_{M}^{(j)} = N_{M}^{(j+1)}\}
$$
denoting the number of inconclusive rounds. Noting that $(T(M) - \log^{*} M)_{M\in\N}$ is $L^{r}$-bounded by Theorem~\ref{mainT}, our task reduces to showing that $(T_{1}(M))_{M\in\N}$ is $L^{r}$-bounded. This is accomplished by proving that for every $M\in\N$ the random variable $T_{1}(M)$ is stochastically dominated by $S := \sum_{k=2}^{\infty} (G_k-1)$, where $G_{2}, G_{3},\ldots$ are independent and $G_{k}$ is geometric with success probability $1-1/k!$. Note that $(N^{(j)}_M)_{j\in\N_0}$ forms a nonincreasing Markov chain on $\{1,\ldots,M\}$ starting in $M$ and absorbed in state 1 at time $T(M)$. Let $G_{2}, G_3,\ldots$ be as above and independent of $(N^{(j)}_M)_{j\in\N_0}$. Denoting by $I\subset \{1,\ldots,M\}$ the random set of states visited by this Markov chain, we have
$$
T_{1}(M)\ \od \ \sum_{k\in I\setminus\{1\}}(G_{k}-1)\ \le\ \sum_{k=2}^{\infty} (G_{k}-1)\ =\ S,
$$
thus proving that $T_{1}(M)$ is stochastically dominated by $S$. The $r$-th moment of $S$ is finite because, by using Minkowski's inequality for the $L^{r}$-norm $\|\cdot\|_r$,
$$
\| S \|_r
\ \leq\
\sum_{k=2}^{\infty} \|G_k-1\|_r
\ =\
\sum_{k=2}^{\infty} \left(\sum_{l=1}^{\infty} \frac{l^{r}}{k!^l}\left(1-\frac{1}{k!}\right)\right)^{1/r}
\ \leq \
\sum_{k=2}^{\infty} \frac 1 {(k!)^{1/r}}\left(\sum_{l=1}^{\infty} \frac{l^{r}}{2^{l-1}}\right)^{1/r}
\ < \ \infty.
$$


It follows that the sequence $(X_{1}(M) - \log^* M)_{M\in\N}$ is $L^{r}$-bounded for every $r>1$ and hence tight.  However, this sequence does not converge in distribution because the subsequential limits in  Theorem~\ref{mainT_{0}} have pairwise distinct distributions by Prop.~\ref{prop:T_0_ast_properties}.
\qed
\end{proof}

\subsection{... of \eqref{N_n_fdd_tilde} of Corollary \ref{main2p}}\label{subsec:main2p}
All results in Subsection \ref{sec::st_tetration} are proved in a similar manner and rely on the simple observation that
$$ \lim_{n\to\infty}L_{n+j}(\widetilde{E}_{n}(\rho))=L_{j}(f(\rho)),\quad \rho\in\R, $$
and the fact that, by Prop.~\ref{prop:no_atoms}, $S_{j}^{*}$ has a continuous distribution function for every $j\ge 2$. Therefore, we confine ourselves to a complete proof of \eqref{N_n_fdd_tilde} of Corollary \ref{main2p}.

\begin{proof}[of Corollary \ref{main2p}, Eq.~\eqref{N_n_fdd_tilde}]
Fixing $m\in\N$, $\rho_{1}<\ldots<\rho_m$, and $k_{1},\ldots,k_m\in\N$, we infer by another use of the duality relation \eqref{duality_S_N}
\begin{align*}
\Prob\{N_{[\widetilde E_{n}(\rho_{i})]}^{(n)} \ge k_{i},\,i=1,\ldots,m\}\ &=\
\Prob\{S_{k_{i}}^{(n)} \le [\widetilde E_{n}(\rho_{i})],\,i=1,\ldots,m\}\\
&=\ \Prob\{S_{k_{i}}^{(n)} \le \widetilde E_{n}(\rho_{i}),\,i=1,\ldots,m\}\\
&=\ \Prob\{L_{n}(S_{k_{i}}^{(n)}) \le L_{n}(\widetilde E_{n}(\rho_{i})),\,i=1,\ldots,m\}.
\end{align*}
By the definition of $f$ (see \eqref{f_def}) and Prop.~\ref{prop:no_atoms}, we have
$$ \lim_{n\to\infty} L_{n}(\widetilde E_{n}(\rho_{i}))=f(\rho_{i})
\quad\text{and}\quad\Prob\{{S}^{*}_{k_{i}}=f(\rho_{i})\}=0
$$
for $i=1,\ldots,m$. Applying Theorem~\ref{main1}, we obtain
\begin{align*}
\lim_{n\to\infty}\Prob\{N_{[\widetilde E_{n}(\rho_{i})]}^{(n)} \ge k_{i},\,i=1,\ldots,m\}\ &=\
\Prob\{{S}_{k_{i}}^{*}\le f(\rho_{i}),\,i=1,\ldots,m\}\\
&=\ \Prob\{{N}^{*}(f(\rho_{i}))\ge k_{i},\,i=1,\ldots,m\}\\
&=\ \Prob\{\widetilde{N}^{*}(\rho_{i})\ge k_{i},\,i=1,\ldots,m\},
\end{align*}
and this shows \eqref{N_n_fdd_tilde}.\qed
\end{proof}

\section{Appendix}
\subsection{Asymptotics of the record times $\nu(n)$}
Recall that $\nu(n)$ denotes the $n$th record time in a sequence of iid observations with continuous distribution.
R\'enyi~\cite{Renyi:62} has shown that $\log \nu(n)$ satisfies a law of large numbers and a central limit theorem, namely
\begin{equation}\label{eq:lln_clt_{n}u_{n}}
\frac{\log \nu(n)}{n} \to 1\ ~\text{a.s.}\quad\text{and}\quad
\frac{\log \nu(n)-n}{\sqrt{n}}\ \dod\ \text{N}(0,1),
\end{equation}
where $\text{N}(0,1)$ denotes the standard normal law. Some additional properties are collected in the subsequent proposition.

\begin{Prop}\label{nu_{n}_prop}
For every $r>0$, we have
\begin{align*}
&\sup_{n\in\N} \Erw\left(\frac{\log \nu(n)}n\right)^{r}<\infty,
\quad\sup_{n\in\N} \Erw\left|\frac{\log \nu(n)-n}{\sqrt{n}}\right|^{r}<\infty,\\
&\sup_{n\in\N} \Erw\left(\frac{n}{1+\log \nu(n)}\right)^{r} <\infty.
\end{align*}
\end{Prop}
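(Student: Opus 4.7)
The cornerstone is the duality
$$\{\log \nu(n) > t\}\ =\ \{K([e^t]) < n\} \quad \text{for } t \geq 0,$$
combined with the fact that $K(M) = \sum_{i=1}^M \xi_i$ is a sum of independent $\text{Bernoulli}(1/i)$ variables. Hence $\Erw K(M) = H_M := \sum_{i=1}^{M}1/i = \log M + O(1)$ and $\Var K(M) \leq H_M$, and Bernstein's inequality yields
$$\Prob\{|K(M) - H_M| \geq s\}\ \leq\ 2\exp\!\left(-\frac{s^2/2}{H_M + s/3}\right)$$
for every $s > 0$.

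Substituting $M = [e^{n+s}]$ and $M = [e^{(n-s)_+}]$ respectively, a routine computation transfers this to the two-sided tail bound
$$\Prob\{|\log \nu(n) - n| > s\}\ \leq\ C\exp\!\left(-c\min\{s, s^2/n\}\right), \quad s \geq s_0,\ n\in\N,$$
with universal constants $C, c, s_0 > 0$. Writing moments as integrals of tails, the second bound of the proposition follows: in
$$\Erw\bigl|(\log\nu(n)-n)/\sqrt n\bigr|^r\ =\ r\int_0^\infty y^{r-1}\,\Prob\{|\log\nu(n)-n| > y\sqrt n\}\,dy,$$
the integrand is $\leq C e^{-cy^2}$ for $y \leq \sqrt n$ and $\leq Ce^{-cy\sqrt n}$ for $y \geq \sqrt n$, and both pieces integrate to quantities bounded uniformly in $n$. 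The first bound is then immediate from $(\log\nu(n))^r \leq 2^{r-1}(n^r + |\log\nu(n)-n|^r)$ combined with $\Erw|\log\nu(n)-n|^r \leq C_r n^{r/2}$.

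For the third bound one needs exponential control on the \emph{left} tail: applying the Bernstein bound at $M = [e^{n/2}]$ (where $\Erw K(M) \approx n/2$ must reach $n$, a deviation of order $n$) gives $\Prob\{\log\nu(n) \leq n/2\} \leq Ce^{-cn}$. Splitting at $A_n := \{\log\nu(n) > n/2\}$, one has $n/(1+\log\nu(n)) \leq 2$ on $A_n$, and $n/(1+\log\nu(n)) \leq n$ on $A_n^c$ (from $\nu(n) \geq 1$); summing the contributions yields
$$\Erw\!\left(\frac{n}{1+\log\nu(n)}\right)^{\!r}\ \leq\ 2^r + n^r\cdot Ce^{-cn},$$
uniformly bounded in $n$.

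The main obstacle is the third inequality. The only deterministic lower bound on $\log\nu(n)$ is $\log n$ (from $\nu(n) \geq n$), giving $n/(1+\log\nu(n)) \leq n/(1+\log n)$, which \emph{diverges} with $n$. The uniform moment bound therefore rests essentially on the exponential smallness of $\Prob\{\log\nu(n) \leq n/2\}$, which in turn requires the large-deviation regime (rather than the Gaussian regime) of the Bernstein estimate for $K(M)$.
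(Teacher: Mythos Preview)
Your argument is correct and entirely self-contained, but it follows a different route from the paper's. The paper simply cites Gut~\cite[Thm.~2]{Gut:90} for the first two bounds, whereas you prove them directly from Bernstein's inequality via the duality $\{\nu(n)>M\}=\{K(M)<n\}$. For the third bound the contrast is sharper: the paper invokes Williams' representation $\nu(n)\eqdist R(n)$ with $R(n+1)=\lceil R(n)/U_n\rceil$, which yields the deterministic lower bound $\log\nu(n)\ge\sum_{k=1}^{n-1}\cE_k$ for iid standard exponentials $\cE_k$; then the AM--GM inequality gives
\[
\Erw\Bigl(\tfrac{n}{1+\log\nu(n)}\Bigr)^{r}\le\bigl(\Erw\cE_1^{-r/n}\bigr)^{n-1}=\Gamma\bigl(1-\tfrac rn\bigr)^{n-1}\to e^{r\gamma}.
\]
Your split at $\{\log\nu(n)>n/2\}$ with the exponential left-tail estimate $\Prob\{\log\nu(n)\le n/2\}\le Ce^{-cn}$ is more elementary and avoids Williams' representation altogether, at the cost of not producing an explicit limiting constant. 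Both arguments ultimately exploit that $\log\nu(n)$ is concentrated at scale $\sqrt n$ around $n$, but the paper does so through an algebraic identity for the moments while you do so through a tail estimate.
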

\begin{proof}

The first two assertions were proved by Gut~\cite[Thm.~2]{Gut:90}. To prove the third one, we use Williams' representation for record times~\cite[page 60]{Nevzorov:01}. Let $U_{1},U_{2},\ldots$ be independent random variables distributed uniformly on the interval $[0,1]$. Define $R(1)=1$ and $R(n+1) = \lceil R(n)/U_{n}\rceil$, for $n\in\N$. The representation  states that the random sequences  $(\nu(n))_{n\in\N}$ and $(R(n))_{n\in\N}$ have the same distribution. We have $R(n+1)\ge R(n)/U_{n}$ and hence,
\begin{equation}\label{eq:R_{n}_williams_estimate}
R(n)\ \ge\ \frac{1}{U_{1}\cdot\ldots\cdot U_{n-1}}, \quad n\in\N.
\end{equation}
Note that the variables $\cE_k:=-\log U_k$ are standard exponential. Fix any $r>0$ and let $n>r$. By~\eqref{eq:R_{n}_williams_estimate}, we have
\begin{align*}
\Erw\left(\frac{n}{1+\log\nu(n)}\right)^{r}\ &=\ \Erw\left(\frac{n}{1+\log R(n)}\right)^{r}\
\le\ \Erw\left(\frac{n}{1+\sum_{k=1}^{n-1}\cE_k}\right)^{r}\\
&\le\ \Erw\left(\prod_{k=1}^{n-1} \cE_k^{-1/n}\right)^{r}\ =\ \left(\Erw(\cE_{1}^{-r/n})\right)^{n-1},
\end{align*}
where the inequality between arithmetic and geometric means has been utilized. Furthermore, with $\Gamma$ denoting the Gamma function and $\gamma$ the Euler-Mascheroni constant,
$$ \big(\Erw(\cE_{1}^{-r/n})\big)^{n-1}\ =\ \left(\int_{0}^{\infty}y^{-r/n}e^{-y}{\rm d}y\right)^{n-1}\ =\ \Gamma^{n-1}\left(1-\frac rn\right) \to\ e^{r\gamma} $$
as $n\to\infty$.\qed
\end{proof}

\begin{Lemma}\label{expectation_lemma}
For all $n\ge 2$, we have
$$ \Erw\left(\frac{1}{\nu(n)}\right) \le \frac{3}{4n}. $$
\end{Lemma}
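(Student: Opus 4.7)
The plan is to use the Williams representation $\nu(n)\eqdist R(n)$ with $R(1)=1$ and $R(n)=\lceil R(n-1)/U_{n-1}\rceil$ for $n\ge 2$, where $U_{1},U_{2},\ldots$ are iid uniform on $(0,1)$, exactly as in the proof of Proposition~\ref{nu_{n}_prop}. The key ingredient is the one-step contraction
\begin{equation*}
\Erw(1/\nu(n))\,=\,\Erw(1/R(n))\,\le\,\Erw(U_{n-1})\,\Erw(1/R(n-1))\,=\,\tfrac{1}{2}\,\Erw(1/\nu(n-1)),
\end{equation*}
valid for every $n\ge 2$ because $R(n)\ge R(n-1)/U_{n-1}$ and $U_{n-1}$ is independent of $R(n-1)$. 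This reduces the problem to a short induction on $n$.

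For the base case $n=2$ I would compute $\Erw(1/R(2))$ directly. Noting that $R(2)=\lceil 1/U_{1}\rceil$ satisfies $\Prob\{R(2)=k\}=1/(k-1)-1/k=1/(k(k-1))$ for $k\ge 2$, a partial-fraction decomposition $1/(k^{2}(k-1))=1/(k-1)-1/k-1/k^{2}$ combined with telescoping yields
\begin{equation*}
\Erw(1/\nu(2))\,=\,\sum_{k=2}^{\infty}\frac{1}{k^{2}(k-1)}\,=\,1-\left(\frac{\pi^{2}}{6}-1\right)\,=\,2-\frac{\pi^{2}}{6}\,\le\,\frac{3}{8},
\end{equation*}
the final inequality being equivalent to $\pi^{2}\ge 39/4\approx 9.75$ and hence clear.

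For $n\ge 3$ the induction step is then straightforward: combining the one-step contraction with the hypothesis $\Erw(1/\nu(n-1))\le 3/(4(n-1))$ gives $\Erw(1/\nu(n))\le 3/(8(n-1))\le 3/(4n)$, the last inequality being equivalent to $n\ge 2$.

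The only point requiring a little care is the base case. The specific constant $3/4$ in the statement appears to be essentially \emph{dictated} by the numerical value $2-\pi^{2}/6\approx 0.3551<3/8$; a cruder purely inductive approach based on $\Erw(1/\nu(1))=1$ and the one-step contraction alone would yield only $(1/2)^{n-1}$, which already exceeds $3/(4n)$ at $n=2$, so the explicit evaluation of $\Erw(1/\nu(2))$ (or a comparably sharp estimate) cannot be bypassed.
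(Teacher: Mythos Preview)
Your proof is correct and essentially coincides with the paper's: both treat $n=2$ by the exact evaluation $\Erw(1/\nu(2))=2-\pi^{2}/6<3/8$, and both handle $n\ge 3$ via Williams' representation and the inequality $R(n)\ge R(n-1)/U_{n-1}$. The only cosmetic difference is that the paper iterates this bound all the way down to obtain $\Erw(1/\nu(n))\le 2^{-(n-1)}$ and then checks $2^{-(n-1)}\le 3/(4n)$ for $n\ge 3$, whereas you fold the target bound $3/(4n)$ directly into the induction hypothesis; your organization is marginally cleaner but the underlying argument is the same.
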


\begin{proof}
If $n=2$, we have $\Prob\{\nu(2)=k\} = \frac{1}{(k-1)k}$ for $k\ge 2$; see~\cite[page 56]{Nevzorov:01}. Consequently,
\begin{align*}
\Erw\left(\frac{1}{\nu(2)}\right)\ &=\ \sum_{k=2}^{\infty}\frac{1}{k^2(k-1)}\ =\ \sum_{k=2}^{\infty}\frac{1}{k(k-1)}\,-\,\sum_{k=2}^{\infty}\frac{1}{k^2}
\ =\ 2-\frac{\pi^2}{6}\ <\ \frac{3}{4} \cdot \frac{1}{2}.
\end{align*}
For $n\ge 3$, we make another use of Williams' representation. Applying~\eqref{eq:R_{n}_williams_estimate}, we obtain
$$ \Erw\left(\frac1 {\nu(n)}\right)\ =\ \Erw\left(\frac1 {R(n)}\right)\ \le\ \Erw(U_{1}\cdot\ldots \cdot U_{n-1})\ =\ 2^{-(n-1)} $$
for all $n\in\N$, and since $2^{-(n-1)}\le\frac 3{4n}$ for $n\ge 3$, as one can easily verify by induction, the lemma is proved.\qed
\end{proof}

\subsection{Conjugating $E_{n}$ and $\widetilde E_{n}$}

In this section we prove the existence of the function $f$ defined in Subsection \ref{sec::st_tetration}.

\begin{Lemma}\label{f_existence}
Let $f_{n}(z):=L_{n}(\widetilde{E}_{n}(z))$. Then $f(z):=\lim_{n\to\infty}f_{n}(z)$ exists for every $z\in\R$ and satisfies $f(z)>1$. Moreover, $f$ is continuous and strictly increasing.
\end{Lemma}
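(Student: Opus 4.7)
The plan is to treat $(f_n)_{n\ge 2}$ as a monotone sequence with summable increments, deduce existence and continuity uniformly on compacts, and then combine a precise asymptotic at $+\infty$ with a functional equation for $f$ to upgrade nondecreasingness to strict monotonicity.

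First I would note that $\widetilde{E}_2(z) = e^{e^z} > 1$ for every $z\in\R$, so $\widetilde{E}_n(z) \ge 1$ and hence $f_n(z) = L_n(\widetilde{E}_n(z))$ is well defined for all $n \ge 2$. Rewriting $L_n = L_{n-1}\circ L_1$ and using $L_1(\widetilde{E}_n(z)) = 1+\widetilde{E}_{n-1}(z)$, one gets $f_n(z) = L_{n-1}(1+\widetilde{E}_{n-1}(z))$, and since $L_{n-1}$ is strictly increasing on $[1,\infty)$ this immediately yields $f_n(z) > f_{n-1}(z)$ for $n \ge 3$. The lower bound $f>1$ will then follow from monotonicity via $f(z) \ge f_2(z) = 1+\log(1+e^z) > 1$.

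To control the increments I would use $L_n'(x) = L_{n-1}'(x)/L_{n-1}(x)$, which after induction gives $L_n'(x) = \prod_{k=0}^{n-1} L_k(x)^{-1} \le 1/x$ for $x\ge 1$. Since $L_{n-1}'$ is decreasing, the mean value theorem on $[\widetilde{E}_{n-1}(z), 1+\widetilde{E}_{n-1}(z)]$ produces the key estimate
\begin{equation*}
0 \le f_n(z) - f_{n-1}(z) \le \frac{1}{\widetilde{E}_{n-1}(z)}, \qquad n \ge 3.
\end{equation*}
On any compact set $[a,b]\subset\R$ the right-hand side is dominated by $1/\widetilde{E}_{n-1}(a)$, whose sum over $n$ converges because tetrations grow super-exponentially in $n$. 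Hence $(f_n)$ is uniformly Cauchy on compacts, $f := \lim f_n$ exists pointwise, and is continuous as a uniform limit of continuous functions on each compact.

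The only subtle step is strict monotonicity, since pointwise limits of strictly increasing continuous functions may have plateaus. I would first extract the asymptotic behavior at $+\infty$: for $z\ge 0$ one has $f_1(z) = 1+z$, and telescoping the increment bound gives $0 \le f(z) - (1+z) \le \sum_{n=2}^\infty 1/\widetilde{E}_{n-1}(z) \to 0$ as $z\to+\infty$. Consequently there exists $A$ such that $f$ is strictly increasing on $[A,\infty)$. Next, passing to the limit in $L_{n+1}(\widetilde{E}_{n+1}(z)) = 1 + \log L_n(\widetilde{E}_{n+1}(z))$ yields the conjugation relation $f(e^z) = e^{f(z)-1}$; in particular $f(z_1) = f(z_2)$ forces $f(\widetilde{E}_n(z_1)) = f(\widetilde{E}_n(z_2))$ for every $n\in\N$. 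If some $z_1 < z_2$ satisfied $f(z_1) = f(z_2)$, then $f$ would be constant on each interval $[\widetilde{E}_n(z_1), \widetilde{E}_n(z_2)]$; but for $n$ large this interval lies inside $[A,\infty)$, contradicting the strict monotonicity established there. The main obstacle is precisely this last step, where the functional equation and the $+\infty$-asymptotics must be combined to exclude plateaus.
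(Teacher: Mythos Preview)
Your overall strategy is sound and, once a small gap is repaired, gives a complete proof by a route genuinely different from the paper's. The paper never uses the telescoping increment bound or the functional equation for strict monotonicity; instead it bounds $f_n$ above by $e+z$ via an induction (using subadditivity of $\log$) to get existence, and then controls derivatives directly: writing $f_n'(z)=f_{n-1}'(e^z)\,e^z/f_{n-1}(e^z)$ and iterating yields, for $z\ge 0$,
\[
\prod_{k=1}^{\infty}\frac{\widetilde E_k(0)}{e+\widetilde E_k(0)}\ \le\ f_n'(z)\ \le\ 1,
\]
so $f(z_1)-f(z_2)\ge C(z_1-z_2)$ for $z_1>z_2\ge 0$ with $C>0$ independent of $n$, and continuity and strict monotonicity follow at once. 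Your increment estimate $0\le f_n-f_{n-1}\le 1/\widetilde E_{n-1}$ is cleaner for existence and continuity (it gives the rate of convergence and uniform convergence on compacts for free), while the paper's uniform lower bound on $f_n'$ disposes of strict monotonicity in one line.

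The gap is in the sentence ``Consequently there exists $A$ such that $f$ is strictly increasing on $[A,\infty)$.'' The asymptotic $f(z)=1+z+o(1)$ by itself does \emph{not} preclude arbitrarily small plateaus accumulating at infinity; a nondecreasing function with $f(z)-(1+z)\to 0$ can still fail to be strictly increasing on every half-line. What your two-sided bound $1+z\le f(z)\le 1+z+g(z)$ (with $g(z)=\sum_{k\ge 1}\widetilde E_k(z)^{-1}\to 0$) \emph{does} give is that any plateau $[a,b]\subset[0,\infty)$ satisfies $b-a\le g(a)$, since $1+b\le f(b)=f(a)\le 1+a+g(a)$. This is enough: if $f(z_1)=f(z_2)$ for some $z_1<z_2$, the functional equation $f(e^z)=e^{f(z)-1}$ forces $f$ constant on each $[\widetilde E_n(z_1),\widetilde E_n(z_2)]$, whose length tends to $\infty$ while $g(\widetilde E_n(z_1))\to 0$, a contradiction. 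So replace the unjustified claim about $[A,\infty)$ by this plateau-length bound and your argument closes.
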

\begin{proof}
Note first that $f_{1}(z)<f_{2}(z)<\ldots$ for every $z\in\R$. Indeed,
\begin{align*}
f_{n}(z)\ &=\ L_{n-1}(1+\log\widetilde{E}_{n}(z))\ >\ L_{n-1}(\log\widetilde{E}_{n}(z))\\
&=\ L_{n-1}(\widetilde{E}_{n-1}(z))\ =\ f_{n-1}(z).
\end{align*}
Assume for a moment that $z\ge 1$ and let us prove by induction that $f_{n}(z)\le e+z$ for all $n\in\N$. For $n=1$ this is true, since $f_{1}(z)=1+z\le e+z$. For $n\ge 2$, we have
\begin{align*}
f_{n}(z)\,&=\,1+\log f_{n-1}(e^z)\,\le\,1+\log (e+e^z)\,\le\,1+\log e+\log e^z\,\le\,e+z,
\end{align*}
where the subadditivity of $z\mapsto \log z$ for $z\ge e$ has been utilized. Since $f_{n}(z)$ is nondecreasing in $z$, we arrive at the uniform bound
\begin{equation}\label{f_upper_bound}
f_{n}(z)\le (e+z)\vee (e+1)\quad\text{for all }z\in\R\text{ and }n\in\N.
\end{equation}
Hence for every fixed $z\in\R$ the sequence $(f_{n}(z))_{n\in\N}$ is nondecreasing and bounded and therefore converges to some limit. The trivial lower bound is
\begin{equation}\label{f_lower_bound}
f_{n}(z)\ge 1\vee z,\quad z\in\R,\quad n\in\N.
\end{equation}
It remains to show that $f$ is continuous and strictly increasing. In view of the functional equation \eqref{f_equation}, it is enough to check this on $[0,\infty)$. For each $n\in\N$, the function $f_{n}$ is differentiable on $\R$ and, by \eqref{f_upper_bound},
\begin{align*}
f_{n}'(z)\ &=\ \frac{f_{n-1}'(e^z)}{f_{n-1}(e^z)}e^z\ \ge\ f_{n-1}'(e^z)\frac{e^z}{e+e^z}\
\ge\ f_{n-2}'(e^{e^z})\frac{e^z}{e+e^z}\cdot\frac{e^{e^z}}{e+e^{e^z}}\\
&\ge\ldots\ge\  f_{1}'(\widetilde{E}_{n-1}(z))\prod_{k=1}^{n-1}\frac{\widetilde{E}_k(z)}{e+\widetilde{E}_k(z)}
= \prod_{k=1}^{n-1}\frac{\widetilde{E}_k(z)}{e+\widetilde{E}_k(z)}.
\end{align*}
As a consequence,
$$
C:= \lim_{n\to\infty}\Big(\inf_{z\ge 0}f_{n}'(z)\Big)\ \ge\ \prod_{k=1}^{\infty}\frac{\widetilde{E}_k(0)}{e+\widetilde{E}_k(0)}\ >\ 0. $$
The function $f$ is obviously nondecreasing, and for arbitrary $z_{1}>z_{2}\ge 0$, we obtain
\begin{align*}
f(z_{1})-f(z_{2})\ &=\ \lim_{n\to\infty}(f_{n}(z_{1})-f_{n}(z_{2}))\ =\ (z_{1}-z_{2}) \lim_{n\to\infty}f'_{n}(\theta_{n})
\geq C(z_{1}-z_{2})\ >\ 0
\end{align*}
and thus the strict monotonicity of $f$. Analogously, to show that $f(z)$ is continuous for $z\ge 0$ and then everywhere, it is enough to check that
$$ \sup_{z\ge 0}f_{n}'(z)\ \le\ 1, \quad n\in\N,$$
but this is obvious in view of
$$ f_{n}'(z)\ =\ \frac{f_{n-1}'(e^z)}{f_{n-1}(e^z)}e^z\ \le\ f_{n-1}'(e^z)\ \le\ldots \le\ f_{1}'(\widetilde{E}_{n-1}(z))\ =\ 1, $$
which holds by \eqref{f_lower_bound}. The proof is complete.\qed
\end{proof}

\subsection{Simulation methods}\label{sec:simulation}
The numerical simulation of the leader-election procedure is a non-trivial task because the straightforward approach based on the definition leads very quickly to both, overflows and extremely large computational times. Let us explain briefly the method we used to generate samples of $S_{j}^{*}$. In order to simulate the $n$-th record time $\nu(n)$, we used Williams' representation~\cite[page 60]{Nevzorov:01} if $n\le 15$, and the central limit approximation (see~\eqref{eq:lln_clt_{n}u_{n}}) if $n>15$. In this way, one can generate the backward iterates $\nu^{(k\darr 1)}(j) = \nu^{(k)} \circ \ldots \circ \nu^{(1)} (j)$. Although this leads to an overflow rather quickly, the simulations show that the rate of convergence of $\mathcal{X}_{k,j}:=L_{k}(\nu^{(k\darr 1)}(j))$ to its limit $\mathcal{X}_{j}^{*}$ is high; see~\eqref{eq:sum_expect_diff_X_{n}_j} for a theoretical estimate.  This allowed us to obtain approximate realizations of $\mathcal{X}_{j}^{*}$ which has the same distribution as $S_j^{*}$.  Figure~\ref{fig:distr_funct_S} shows the empirical distribution functions  of $S_{2}^{*}, S_{3}^{*}, S_{4}^{*}$ based on samples of size $>5000$. To obtain an estimate for the distribution of $T^{*}(\rho)$, see Figure~\ref{fig:distr_T_rho}, we used the identity
\begin{align*}
\Prob\{T^{*}(\rho)=k\}\ &=\ \Prob\{T^{*}(\rho)\le k\} -\Prob\{T^{*}(\rho)\le k-1\}\\
&=\ \Prob\{L_{k}(\rho)\le S_{2}^{*}\le L_{k-1}(\rho)\},
\end{align*}
for $k\in\Z$, which follows from~\eqref{eq:S_{2}_ast_T_rho_connection}.

\acknowledgement{The authors would like to express their sincere gratitude to two anonymous referees for numerous suggestions and inspiring comments on the first version of the manuscript. The research of Gerold Alsmeyer was supported by the Deutsche Forschungsgemeinschaft (SFB 878), the research of Alexander Marynych by the Alexander von Humboldt Foundation.

\bibliographystyle{abbrv}
\bibliography{StoPro}

\end{document}